\newtheorem*{uundistorted}{Theorem 1.2}
\newtheorem*{mainmain}{Theorem 1.1}
\newtheorem{thm}{Theorem}[section]
\newtheorem{lemma}[thm]{Lemma}
\theoremstyle{definition}
\newtheorem*{thx}{Acknowledgments}
\newcommand{\Arc}{\mathcal{C}'(S)}
\newcommand{\ArcY}{\mathcal{C}'(Y)}
\newcommand{\ArcoY}{\mathcal{C}_0'(Y)}
\newcommand{\C}{\mathcal{C}(S)}
\newcommand{\Co}{\mathcal{C}_0(S)}
\newcommand{\CY}{\mathcal{C}(Y)}
\newcommand{\CoY}{\mathcal{C}_0(Y)}
\newcommand{\G}{\Gamma}
\newcommand{\Ghat}{\widehat{\Gamma}}
\newcommand{\M}{\mathcal{M}}
\newcommand{\MG}{\mathcal{M}_G}
\newcommand{\MM}{\mathcal{M}_{MM}}
\newcommand{\Mod}{\mbox{Mod}(S)}
\newcommand{\YM}{{Y_{MM}}}
\newcommand{\R}{\mathbb{R}}
\newcommand{\Z}{\mathbb{Z}}
\begin{document}

\author{Christopher Loa}

\title{Free products of Abelian groups in mapping class groups}
\maketitle

\begin{abstract}
    We construct a new family of examples of parabolically geometrically finite subgroups of the mapping class group in the sense of \cite{DDLS} and prove they are undistorted in $\Mod$.
\end{abstract}

\section{Introduction}

In \cite{FM} Farb and Mosher introduce a notion of \textit{convex cocompactness} for mapping class groups. The original notion of convex cocompactness comes from Kleinian groups, where it is a special case of geometric finiteness. In recent work \cite{DDLS}, Dowdall, Durham, Leininger, and Sisto have introduced a notion of \textit{parabolic geometric finiteness} for mapping class groups. Examples include convex cocompact groups (as one would hope) and finitely generated Veech groups by work of Tang \cite{T}. In this paper we construct a new family of examples of parabolically geometrically finite groups and prove they are undistorted in $\Mod$.

Let $S = S_{g,p}$ be an orientable surface with genus $g(S)$ and $0 \leq p(S) < \infty$ punctures. We measure the complexity of $S$ via $\xi(S) = 3g(S) + p(S) - 3$. Assume $\xi(S) \geq 1.$ Let $A, B$ be multicurves in $S$ and consider subgroups $H_A$ $\cong$ $\mathbb{Z}^n, H_B$ $\cong$ $\mathbb{Z}^m$ of Mod$(S)$ generated by multitwists about the components of $A$ and $B$, respectively. Let $G = \langle H_A, H_B \rangle$, and consider the natural homomorphism  $\Phi: H_A * H_B \to G$. Our first result says that if $A$ and $B$ are ``sufficiently far apart'' in $\C$, then  $\Phi$ is injective.

\begin{thm} \label{thm: main}
There exists a constant $D_0 \geq 3$, independent of S, with the following property. If $d_S (A,B) \geq D_0$, then  $\Phi: H_A * H_B \to G$ is injective, hence an isomorphism, and G is parabolically geometrically finite. Moreover, any element not conjugate into a factor is pseudo-Anosov.
\end{thm}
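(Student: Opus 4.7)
The plan is to deduce all three conclusions---injectivity of $\Phi$, parabolic geometric finiteness, and the pseudo-Anosov classification---from a single subsurface-projection ping-pong analysis of the action of $G$ on $\C$. I would fix $D_0$ large enough that both the universal Behrstock inequality constant $M$ and the Masur-Minsky Bounded Geodesic Image (BGI) constant ``fit inside'' $D_0$. The hypothesis $d_S(A,B) \geq D_0$ then yields two standing facts: by Hempel's logarithmic bound $d_S \leq 2\log_2 i + 2$, every pair $\alpha \subset A$, $\beta \subset B$ has $i(\alpha, \beta) \geq 2$ and pairwise fills a subsurface; and, by BGI applied to any $\C$-geodesic from $A$ to $B$, each annular projection $\pi_\alpha(B)$ and $\pi_\beta(A)$ has uniformly bounded diameter.

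For injectivity, I would take a word $w = h_1 h_2 \cdots h_k$ reduced in the free-product sense (each $h_i \neq 1$, consecutive letters alternating between $H_A$ and $H_B$) and a base curve $c_0$ filling with $A \cup B$. Tracking the annular projections of the partial orbits $c_i = h_i \cdots h_1 c_0$, each $h_i$ contributes a shift of size at least $1$ to $\pi_\gamma$ at some component $\gamma$ in its support, while Behrstock's inequality confines the contribution of the opposite-factor letters $h_{i \pm 1}$ to $\mathcal{C}(\gamma)$ to a region of bounded diameter. The shifts therefore do not cancel, and one extracts a definite lower bound on $d_\gamma(c_0, w c_0)$, giving $w \cdot c_0 \neq c_0$ and hence $w \neq 1$ in $G$.

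The same analysis, applied to powers $g^n$ for $g \in G$ not conjugate into a factor, shows that $d_S(c_0, g^n c_0)$ grows linearly in $n$; positive translation length on the hyperbolic space $\C$, together with the Masur-Minsky classification, then forces $g$ to be pseudo-Anosov. For parabolic geometric finiteness, the same subsurface projection estimates promote to a quasi-isometric embedding of the relative Cayley graph of $(G; H_A, H_B)$ into the Farb-electrified curve graph obtained by coning off neighborhoods of the $G$-translates of $A$ and $B$; this relatively-hyperbolic model is what one checks against the \cite{DDLS} criterion with peripheral structure $\{H_A, H_B\}$ to conclude parabolic geometric finiteness.

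I expect the principal obstacle to be the non-cancellation computation at the heart of the ping-pong. Single multitwists contribute only unit shifts to annular projections, so one cannot directly import the ``high-power'' arguments of Clay-Leininger-Mangahas; the purpose of the large-distance hypothesis is precisely to make the bounded Behrstock error at each transition between $H_A$ and $H_B$ too small to absorb these unit shifts, and the exact value of $D_0$ is determined by balancing these two effects.
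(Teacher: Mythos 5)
Your overall architecture (control the orbit of a base curve, deduce injectivity, deduce positive translation length for elements not conjugate into a factor, and then get PGF from an equivariant embedding of the relative Cayley graph) matches the paper's, but the engine you propose for the key estimate does not work, and the paper uses a genuinely different one. The non-cancellation step of your annular-projection ping-pong requires each letter $h_i$ to move the projection to some annulus in its support by more than the error incurred at the transitions to the adjacent letters. That error is governed by the Behrstock/BGI constants and by the $\pm 5$-type discrepancy between the twist $t_\gamma$ on $S$ and the twist on the annular cover (equation (3) of the paper); all of these are \emph{uniform} constants, independent of $d_S(A,B)$. A generator of $H_A$ or $H_B$ is a single multitwist, so a reduced letter may shift the relevant annular coefficient by an amount comparable to $1$, which is swallowed by those errors. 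Crucially, enlarging $D_0$ does not help: annular coefficients $d_\gamma(\cdot,\cdot)$ see twisting and intersection data, not curve-graph distance, so the hypothesis $d_S(A,B)\geq D_0$ never enters your estimate. This is exactly why high-power hypotheses appear in Clay--Leininger--Mangahas and in Runnels' theorem (which the paper contrasts with in the introduction); your closing paragraph identifies the obstacle correctly, but the proposed resolution is backwards.

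The paper's actual mechanism is hyperbolic-geometric rather than projection-combinatorial. For $h\in H_B\smallsetminus\{1\}$ it raises $h$ to a power $N>M+5$ so that the annular coefficient at some component $\beta_2$ of $B$ exceeds the BGI constant $M$; the contrapositive of BGI then forces any geodesic $[\alpha_1,\Phi(h)^N\alpha_1]$ to pass within distance $1$ of $\beta_2$, giving $d_S(\alpha_1,\Phi(h)^N\alpha_1)\geq 2d_S(\alpha_1,\beta_1)-4$, and dividing by $N$ via the triangle inequality gives a lower bound for the single letter $h$ that is linear in $D=d_S(A,B)$. Thin-triangle arguments then show that every pair of adjacent edges of the Bass--Serre tree maps to a $D$-local $(1,13\delta+2C_0)$-quasigeodesic with constants independent of $D$, and the Coornaert--Delzant--Papadopoulos local-to-global theorem makes the whole tree a global quasigeodesic embedding once $D_0$ exceeds the local-to-global threshold: this is where the largeness of $D_0$ is actually spent. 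To salvage your outline you would need to replace the annular ping-pong with such a local-to-global argument (or assume large powers of the generators, which is a different theorem). One further mismatch: the DDLS definition of PGF asks for an equivariant quasiisometric embedding of $\Ghat$ into $\C$ itself, not into an electrified curve graph, so your last step needs the embedding the paper constructs rather than a coned-off target.
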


When $H_A, H_B$ are cyclic, we note the resemblance between Theorem 1.1 and \cite[Theorem 1.3]{Gu}. Notice that obtaining pseudo-Anosovs in this manner is a variation of Thurston's \cite{Th} and Penner's \cite{P} constructions of pseudo-Anosovs. Whereas Thurston's and Penner's constructions only require $A \cup B$ fill $S$, which can be assured by assuming $d_S(A,B) \geq 3$, the distance prescribed by Theorem 1.1 is relatively large. We therefore pose the question, given $H_A, H_B$, what is the optimal bound for $D_0$?

It is known that convex cocompact groups and finitely generated Veech groups are undistorted, i.e. quasiisometrically embedded, in $\Mod.$ Our second result is that our family of examples are also undistorted in $\Mod.$ In fact, this result holds for any parabolically geometrically finite $G = H_A*H_B$. That is, we needn't assume $A$ and $B$ are ``sufficiently far apart'' in $\C$ as in Theorem 1.1.

\begin{thm} \label{thm: undistorted} 
Let $G = H_A * H_B \subset$ \emph{Mod}$(S)$ be a nontrivial free product which is  parabolically geometrically finite with $H_A, H_B$ subgroups generated by multitwists about multicurves $A, B,$ respectively. Then $G$ is undistorted in \emph{Mod}$(S)$.
\end{thm}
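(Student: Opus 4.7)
The plan is to reduce undistortedness of $G$ to an ``additivity of projections'' statement along the free product normal form, by comparing two expressions in annular subsurface projections: one for the word length in $G$ and one for the distance in $\Mod$ via the Masur--Minsky distance formula.

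Fix a marking $\mu$ of $S$ meeting every curve of $A \cup B$ essentially. The first ingredient is that each factor $H_A \cong \mathbb{Z}^{|A|}$ (and symmetrically $H_B$) is already undistorted in $\Mod$: for $h = \prod_c T_c^{n_c} \in H_A$ one has $|h|_{H_A} \asymp \sum_c |n_c|$, the annular coefficients satisfy $d_c(\mu, h\mu) \asymp |n_c|$, and all other subsurface projections of $(\mu, h\mu)$ remain uniformly bounded, so by the distance formula $d_{\Mod}(\mu, h\mu) \asymp |h|_{H_A}$. Writing $g \in G$ in alternating normal form $g = g_1 g_2 \cdots g_k$ with $g_i \in H_{X_i} \setminus \{1\}$ and $X_i \in \{A,B\}$ alternating, the free-product normal form gives
\[
|g|_G \; \asymp \; \sum_{i=1}^k |g_i|_{H_{X_i}} \; \asymp \; \sum_{i=1}^k \sum_{c \in X_i} d_c(\mu, g_i \mu).
\]

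The upper bound $d_{\Mod}(\mu, g\mu) \lesssim |g|_G$ is automatic. The content of the theorem is the reverse inequality, which I would obtain by tracking the orbit path $\mu, g_1 \mu, g_1 g_2 \mu, \ldots, g\mu$ and, for each factor $g_i$, exhibiting annular projections of $(\mu, g\mu)$ totalling at least a definite fraction of $|g_i|_{H_{X_i}}$. The natural annuli are those around the translated curves $g_1 \cdots g_{i-1} \cdot c$ for $c \in X_i$; on such an annulus the projection should register the twist contributed by $g_i$, provided the subsequent factors (which lie in the opposite peripheral subgroup) do not undo that twist.

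The main obstacle is making this no-cancellation property precise, and the parabolic geometric finiteness hypothesis is exactly what supplies it: PGF endows $G$ with a relatively hyperbolic action on a cusped refinement of $\mathcal{C}(S)$ in which conjugates of $H_A$ and $H_B$ correspond to disjoint horoballs, and in which the orbit map is a $G$-equivariant quasi-isometric embedding. Pushing this embedding back to $\Mod$ via the distance formula, and combining it with the undistortedness of each peripheral subgroup established above, yields that $G \hookrightarrow \Mod$ is itself a quasi-isometric embedding. The technical core is translating the projection/horoball data from the cusped curve graph into $\Mod$ so that this combination argument goes through, and I expect this translation to follow directly from the tools developed in \cite{DDLS}.
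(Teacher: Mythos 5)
Your overall strategy is the same as the paper's: reduce to the Masur--Minsky distance formula, identify the contributing subsurfaces as the annuli about the $G$-translates of the components of $A$ and $B$, and show that the twisting contributed by each syllable of the normal form survives in the projection of the pair $(\mu, g\mu)$. However, the step you defer --- the no-cancellation property, which you ``expect to follow directly from the tools developed in \cite{DDLS}'' --- is precisely the technical core of the theorem, and it does not follow directly from anything in \cite{DDLS}. What PGF actually hands you is an equivariant quasiisometric embedding $\phi$ of the coned-off Cayley graph (equivalently the Bass--Serre tree $T$) into $\mathcal{C}(S)$ itself, not a horoball/cusped-space picture, and converting that into control of annular projections requires several nontrivial ingredients that must be proved: (i) a filling lemma (for distinct translates $A, A'$ of the multicurves, $A\cup A'$ fills $S$, proved via the BCP property --- without this the relevant projections $\pi_{A(w)}(A(w'))$ need not even be nonempty); (ii) stability of the quasigeodesic $\phi([v_1,v_2]_T)$ plus the Bounded Geodesic Image theorem, to show that the geodesic $[\phi(v_1),\phi(v_2)]$ stays far from $\partial A(w)$ except near $\phi(w)$, so that only the syllable at $w$ is registered in $\mathcal{C}(A(w))$; and (iii) uniform bounds on projections across edges and half-edges of $T$, obtained from cocompactness of the edge action. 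Your heuristic that ``subsequent factors lie in the opposite peripheral subgroup and so do not undo the twist'' is the right intuition, but as stated it is an assertion, not an argument, and it is exactly where the distance hypothesis-free use of PGF must enter.

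There is a second, quieter gap: you invoke the Masur--Minsky distance formula for the orbit markings $g\mu$, but the standard formula is stated for complete clean markings and their elementary-move metric, whereas the natural $G$-orbit of a fixed marking does not consist of clean markings and is not obviously compatible with the elementary-move graph. The paper resolves this by building a \emph{general marking graph} $\mathcal{M}$ of $R$-markings, proving it is a $\mathrm{Mod}(S)$-model via Milnor--\v{S}varc, and establishing a distance formula for it (Theorem 4.3) by a chain of comparabilities with the Masur--Minsky version. You would need some substitute for this, or a careful argument that $g\mu$ can be replaced by an adjacent clean marking with uniformly controlled projection error. Also, a small correction to your first ingredient: $H_A$ is generated by multitwists, so $H_A\cong\mathbb{Z}^n$ with $n$ possibly smaller than the number of components of $A$, and the comparison $|h|_{H_A}\asymp\sum_c d_c(\mu,h\mu)$ requires the (true but not free) fact that $H_A$ is undistorted in the full twist group $\mathbb{Z}^{|A|}$ about the components of $A$.
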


Our results are complementary to recent results of Runnels \cite{R}. Given subgroups $H_A, H_B$ of $\Mod$ as described above, without any assumptions on their distance in $\C$,  \cite[Theorem 1.2]{R} tells you that, if the generators of $H_A, H_B$ are raised to sufficiently large powers, resulting in subgroups say $H_A',H_B'$, then $\langle H_A', H_B'\rangle$ is isomorphic to $H_A*H_B$ and undistorted in $\Mod.$ We note however that this is a special case and \cite[Theorem 1.2]{R} can be used to generate more general, undistorted RAAG's in $\Mod$. For more on embedding RAAG's in $\Mod$ see \cite{CLM, CMM, K, Seo}.

\subsection{Plan of the paper}
Section 2 is dedicated to background material. In section 3 we prove Theorem 1.1. In section 4 we introduce a \textit{general marking graph} and prove a Masur-Minsky style distance formula for it. Using a general marking graph as a model for $\Mod,$ we prove in section 5 that our subgroups are undistorted in $\Mod.$ In section 6, we discuss another notion of geometric finiteness and how our groups fit into this framework. We note that in each section we reindex the subscripts for variables.

\begin{thx}
I would like to thank my advisor Christopher J. Leininger for his guidance and support. I would also like to thank Marissa Miller and Jacob Russell for the helpful conversations regarding HHS structures and hieromorphisms.
\end{thx}

\section{Background}

\subsection{Coarse geometry}

Let $(X, d_X), (Y, d_Y)$ be metric spaces. We say that a map $f: X \rightarrow Y$ is a \textit{(K,C)-quasiisometric embedding} of $X$ into $Y,$ where $K \geq 1,  C \geq 0,$ if given any two points $a, b \in X$ we have

\begin{equation*}
    \frac{1}{K}d_X (a, b) - C \mbox{  } \leq \mbox{  } d_Y(f(a), f(b)) \mbox{  } \leq \mbox{  } Kd_X(a, b) + C.
\end{equation*}

If there exists $A>0$ such that $f(X)$ is $A$-dense in $Y$ we say that $f$ is a \textit{(K,C)-quasiisometry}.

Let $(X, d)$ be a metric space, $I\subset \R$ be an interval, and $f:I \to X$. We say that $f$ is a \textit{(K, C)-quasigeodesic} if it is a $(K,C)$-quasiisometric embedding. We say that $f$ is a \textit{D-local (K,C)-quasigeodesic} if $f|_J: J \rightarrow X$ is a $(K,C)$-quasiisometric embedding for all subintervals $J \subset I$ of length at most $D$.

Given two nonnegative real numbers $A, B,$ we say that $A$ and $B$ are \textit{(K,C)-comparable} for $K \geq 1, C \geq 0$ and write

\begin{equation*}
   A \asymp_{K,C} B 
\end{equation*}
if

\begin{equation*}
    \frac{1}{K}A - C \leq B \leq KA + C.
\end{equation*}

Note that this is not an equivalence relation since symmetry and transitivity fail. However
\begin{equation*}
    A \asymp_{K,C} B \Rightarrow B \asymp_{K, KC} A,
\end{equation*}
and 
\begin{equation*}
    A \asymp_{K,C} B \asymp_{K',C'} D \Rightarrow A \asymp_{KK',C' + \frac{C}{K'}} D.
\end{equation*}

We define $A \preceq_{K, C} B$ to mean

\begin{equation*}
    A \leq KB + C.
\end{equation*}
and finally

\begin{equation*}
  [A]_B =
  \begin{cases}
    A & \text{if $A \geq B$} \\
    0 & \text{otherwise}.
  \end{cases}
\end{equation*}

In general, two quantities being comparable does not imply that the truncated quantities are comparable. However, we do have the following:

\begin{lemma}
Let $\{x_i\}_{i=1}^N, \{y_i\}_{i=1}^N$ be two finite sequences of nonnegative numbers such that 

\begin{equation*}
    x_i \asymp_{K,C} y_i,
\end{equation*}
with $K \geq 1, C \geq 0$. If $\kappa > 2KC$ then

\begin{equation*}
    \sum_{i=1}^N [{x_i}]_\kappa \preceq_{2K, 0} \sum_{i=1}^N [y_i]_C.
\end{equation*}
\end{lemma}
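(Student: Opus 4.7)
The plan is to handle the sum term by term, splitting the indices according to whether $x_i$ exceeds the truncation threshold $\kappa$. On the ``small'' side, any index $i$ with $x_i < \kappa$ contributes $[x_i]_\kappa = 0$ to the left-hand side and can be discarded; on the ``large'' side, I need to show that whenever $[x_i]_\kappa = x_i$, the corresponding $y_i$ is genuinely large (specifically, $\geq C$, so that $[y_i]_C = y_i$) and is within a factor $2K$ of $x_i$.

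For the first part, suppose $x_i \geq \kappa$. The left-hand comparability inequality gives
\begin{equation*}
y_i \geq \tfrac{1}{K} x_i - C \geq \tfrac{\kappa}{K} - C > \tfrac{2KC}{K} - C = C,
\end{equation*}
which is exactly where the hypothesis $\kappa > 2KC$ is used; hence $[y_i]_C = y_i$. For the second part, the right-hand comparability inequality gives $x_i \leq K y_i + C \leq K y_i + KC$; since we just showed $y_i \geq C$, i.e.\ $KC \leq K y_i$, I conclude
\begin{equation*}
[x_i]_\kappa = x_i \leq K y_i + K y_i = 2K y_i = 2K [y_i]_C.
\end{equation*}

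Summing over all indices and throwing back in the discarded (vanishing) terms on the left as well as the nonnegative omitted terms on the right yields
\begin{equation*}
\sum_{i=1}^N [x_i]_\kappa \;\leq\; 2K \sum_{i=1}^N [y_i]_C,
\end{equation*}
which is the desired $\preceq_{2K,0}$ bound. There isn't really a ``hard part'' here: the only subtlety is recognising that the threshold condition $\kappa > 2KC$ is precisely tuned so that $x_i \geq \kappa$ forces $y_i$ past the truncation level $C$, at which point the upper comparability bound absorbs the additive constant into a multiplicative factor of $2$. The factor $2$ in the conclusion, rather than something sharper, comes from exactly this absorption step.
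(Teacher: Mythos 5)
Your proof is correct and follows essentially the same route as the paper's: restrict to the indices with $x_i \geq \kappa$, use the lower comparability bound together with $\kappa > 2KC$ to force $y_i > C$ (so the $C$-truncation is inactive), and absorb the additive constant in the upper bound to get $x_i \leq 2K\,y_i$ before summing. The only cosmetic difference is that the paper divides the comparability inequalities through by $x_i$ to record the two-sided estimate $x_i \asymp_{2K,0} y_i$, whereas you bound only the direction actually needed.
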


\begin{proof}

Define

\begin{equation*}
    \Omega = \{i | x_i \geq \kappa \}. 
\end{equation*}

Then for $i \in \Omega$

\begin{equation*}
    x_i > 2KC > 2C \Rightarrow \frac{1}{2} \geq \frac{1}{2K} > \frac{C}{x_i}.
\end{equation*}

By comparability we have

\begin{equation*}
    \frac{x_i}{K} - C \leq y_i \leq Kx_i + C.
\end{equation*}

We divide through by $x_i$ which gives for $i \in \Omega$

\begin{equation*}
    \frac{1}{2K} < \frac{1}{K} -\frac{C}{x_i}\leq \frac{y_i}{x_i} \leq K + \frac{C}{x_i} < K + \frac{1}{2} < 2K,
\end{equation*}
i.e.

\begin{equation*}
    x_i \asymp_{2K, 0} y_i.
\end{equation*}

Using the comparability lower bound we have

\begin{equation*}
    \sum_{i=1}^N [{x_i}]_\kappa = \sum_{i \in \Omega} x_i \leq 2K\sum_{i \in \Omega} y_i \leq 2K\sum_{i=1}^N [y_i]_C.
\end{equation*}
\end{proof}













\subsection{$\delta$-hyperbolicity}
Let $X$ be a geodesic metric space. We say a geodesic triangle is $\delta$-\textit{thin} if each side is contained in the $\delta$-neighborhood of the other two. We say that $X$ is $\delta$-\textit{hyperbolic} or \textit{Gromov hyperbolic} if all geodesic triangles are $\delta$-thin. $\delta$-hyperbolicity is a quasiisometric invariant. See \cite{BH} for a full treatment.

\subsection{The curve and arc graphs}

Let $(\C, d_S)$ denote the \textit{curve graph} of $S$. For $\xi(S) \geq 2$, the vertices are homotopy classes of essential simple closed curves on $S$ with an edge of length 1 between any vertices which correspond to curves with disjoint representatives. For $\xi(S) = 1$, $S$ is either a once-punctured torus or a 4-punctured sphere. We again define the vertices to be homotopy classes of essential simple closed curves, but with edges of length 1 between vertices which correspond to curves with representatives that intersect minimally. For the once-punctured torus this minimum intersection is 1 and for the 4-punctured sphere the minimum intersection is 2. It is a well-known fact that in this case $\C$ is isomorphic to the Farey graph.
We define $\C$ to be empty for $\xi(S) = 0$ (i.e. for a pair of pants and a torus). For $\xi(S) = -1$, $S$ is an annulus and we describe $\C$ in detail in a later subsection.

In \cite{MM1}, Masur and Minsky prove that $\C$ is $\delta$-hyperbolic and has infinite diameter for $\xi(S) \geq 1$. 
It turns out that for the annular case, $\C$ is quasiisometric to $\Z$, hence is also $\delta$-hyperbolic. The best known hyperbolicity constant $\delta$ for $\C$ is 17 \cite{HPW}.

Let $(\Arc, d_S')$ denote the \textit{arc and curve graph} of $S$ for $\xi(S) \geq 1.$
The vertices are both homotopy classes of essential simple closed curves and \textit{arcs}. An arc is a homotopy class of a properly embedded path in $S$ which, fixing the endpoints, cannot be deformed into a boundary component or puncture. 
Again, edges of length 1 are put between vertices with disjoint representatives.






\subsection{Relative hyperbolicity}

The notion of relative hyperbolicity was first introduced by Gromov in \cite{Gr}. Since then many equivalent definitions have been suggested. Throughout this paper it is useful to have the following two in mind.

The first is due to Farb \cite{F}. Let $G$ be a finitely generated group and $\G$ a Cayley graph for $G$ with respect to a finite generating set. Given a finite set of finitely generated subgroups $\{H_1, ..., H_k\}$, the \textit{coned-off Cayley graph} $\Ghat = \Ghat(G, \{H_1, ..., H_k\})$ is obtained by adding a vertex $v(gH_i)$ for every left coset and adding an edge of length $\frac{1}{2}$ from every element of $gH_i$ to $v(gH_i)$. We say that $(G, \{H_1, ..., H_k\})$ satisfies the \textit{bounded coset penetration} property (BCP for short) if paths in $\G$ that ``quotient'' to quasigeodesics in $\Ghat$ penetrate cosets similarly. See \cite{F} for a precise statement. We say that $G$ is \textit{hyperbolic relative to} $\{H_1, ..., H_k\}$ if $\Ghat(G, \{H_1, ..., H_k\})$ is Gromov hyperbolic and the BCP property is satisfied.

The second definition was proposed by Bowditch in \cite{Bow}. Again, let $G$ be a finitely generated group and $\{H_1, ..., H_k\}$ a finite collection of finitely generated subgroups. Let $\mathcal{H}$ denote the set of all $H_i$-conjugates and suppose $G$ acts on a connected graph $T$ such that the following hold:
\begin{enumerate}
    \item $T$ is hyperbolic and each edge of $T$ is contained in only finitely many circuits of length $n$ for any $n \in \mathbb{N}$. A \textit{circuit} is a closed path without repeated vertices.
    \item There are finitely many edge orbits $Ge$, and each edge stabilizer $S_e$ is finite.
    \item The elements of $\mathcal{H}$ are precisely the infinite vertex stabilizers $S_v.$
\end{enumerate}

Then we say that $G$ is hyperbolic relative to $\{H_1, ..., H_k\}$. Both of the above definitions are known to be equivalent \cite{Bow}.

\subsection{Parabolic geometric finiteness} In \cite{DDLS}, Dowdall, Durham, Leininger, and Sisto introduce a notion geometric finiteness for subgroups of $\Mod$, generalizing Farb and Mosher's notion of convex cocompactness \cite{FM} (see also \cite{H, KL}). We say $G \subset \Mod$ is \textit{parabolically geometrically finite} (PGF for short) if the following hold:

\begin{enumerate}
    \item $G$ is hyperbolic relative to a (possibly trivial) collection of subgroups $\{H_1, ..., H_k\}$ with each $H_i$ an Abelian group virtually generated by multitwists about a multicurve $A_i$.
    \item There is an  equivariant quasiisometric embedding $\phi: \Ghat \to \C$.
\end{enumerate}

Examples include convex cocompact groups by work of Hamenst{\"a}dt \cite{H} and Kent-Leininger \cite{KL}, and finitely generated Veech groups by work of Tang \cite{T}.

\subsection{Subsurface projections}

Subsurface projections are an important part of our discussion, so we will briefly outline their construction following \cite{MM2}. This construction is originally due to Ivanov. A payoff is the Masur and Minsky's bounded geodesic image theorem, which is analogous in $\C$ to geodesics having uniformly bounded closest-point projections to any disjoint horoball in $\mathbb{H}^3$.

Define a  \textit{domain Y} in $S$ be an isotopy class of a connected, incompressible, non-peripheral, open subsurface. We say $Y$ is a \textit{proper subdomain} if $Y \neq S$. We first assume $\xi(Y) > 0$, since matters are simpler and the initial intuition is more easily gleaned. A \textit{subsurface projection} is a map $\pi_Y: \Co \to \mathcal{P}(\CoY)$ (where $\mathcal{P}(X)$ is the set of finite subsets of $X$) constructed as follows.

First, let $\pi_Y' : \Co \to \mathcal{P}(\ArcoY)$
map a simple closed curve to the simplex in $\ArcoY$ whose vertices are homotopy classes of arcs which represent the curve's minimal intersection with $Y$, or just the curve itself if it is contained in $Y$. We say $\pi_Y'(\alpha) = \emptyset$ if $\alpha$ does not meet $Y$ essentially. By \cite[Lemma 2.2]{MM2}, $\CY$ embeds into the $\ArcY$ as a cobounded set. Specifically, Masur and Minksy show that there is a coarse retraction $\psi_Y: \ArcoY \to \mathcal{P}(\CoY)$ that perturbs points no more than distance 1 and distances between adjacent points no more than 2. Finally, we compose these maps to get a map $\pi_Y  = \psi_Y \circ \pi'_Y : \Co \to \mathcal{P}(\CoY)$. Given $\alpha, \beta \in \Co$ with $\pi_Y(\alpha), \pi_Y(\beta) \neq \emptyset$ we define $d_Y(\alpha, \beta) = \mbox{diam}_Y(\pi_Y(\alpha), \pi_Y(\beta)).$

Though we haven't defined subsurface projections for $\xi(Y)=-1$, we go ahead and state the bounded geodesic image theorem.

\begin{thm}[Thm 3.1 of \cite{MM2}]

There exists a constant $M(\xi(S))$ with the following property. Let Y be a proper subdomain of $S$ with $\xi(Y) \neq 0$. Let g be a geodesic segment, ray, or bi-infinite line in $\C$ such that $\pi_Y(v) \neq \emptyset$ for every vertex v in g.
Then
\begin{equation*}
    \mbox{\emph{diam}}_Y(g) \leq M
\end{equation*}

\end{thm}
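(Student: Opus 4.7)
The plan is to argue by contrapositive: assuming every vertex of $g$ has nonempty projection to $Y$, I want to show that $\mathrm{diam}_Y(g)$ is bounded by a constant depending only on $\xi(S)$. A standing reduction comes from the basic ``coarse Lipschitz'' observation that if $\alpha,\beta\in\C$ are disjoint and both project nontrivially to $Y$, then disjoint representatives restrict to disjoint arcs and curves on $Y$, so after applying the coarse retraction $\psi_Y$ one obtains $d_Y(\alpha,\beta)\leq 2$. Applied to consecutive vertices of $g$, this shows that the sequence $\pi_Y(v_0),\ldots,\pi_Y(v_n)$ in $\CY$ has jumps of size at most $2$, and after passing to a subsegment I may assume the diameter is realized between the endpoints $\alpha=v_0$ and $\beta=v_n$. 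The task is then to bound $d_Y(\alpha,\beta)$.

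The strategy is to build a Masur--Minsky hierarchy $H$ between a pair of clean markings $\mu_-,\mu_+$ extending $\alpha$ and $\beta$. The main geodesic $g_H$ of $H$ is a tight geodesic in $\C$ from $\alpha$ to $\beta$, and by $\delta$-hyperbolicity together with the uniform fellow-traveling of tight geodesics, $g_H$ is Hausdorff-close to $g$. The key structural input I would invoke from hierarchy theory is: there is a threshold $M_1=M_1(\xi(S))$ such that whenever $d_Y(\mu_-,\mu_+)>M_1$, the surface $Y$ arises as a \emph{component domain} of $H$, so that $H$ contains a geodesic $h_Y\subset \CY$ whose very existence forces each component of $\partial Y$ to appear as a vertex of some geodesic in $H$ immediately above $Y$.

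From this I would extract a vertex $v$ of $g_H$ disjoint from $\partial Y$. Any simple closed curve disjoint from every component of $\partial Y$ either is a component of $\partial Y$, lies entirely inside $Y$, or lies in its complement; the position of $v$ in the hierarchy rules out the middle possibility, so $\pi_Y(v)=\emptyset$. Using the closeness of $g_H$ to $g$ together with stability of geodesics in the hyperbolic space $\C$, I would promote $v$ to a vertex $v'$ on $g$ itself with $\pi_Y(v')=\emptyset$, contradicting the hypothesis. The resulting constant $M$ is then $M_1$ plus a bookkeeping term. The annular case $\xi(Y)=-1$ is formally parallel once the annular projection is defined via the annular cover, though intersection numbers replace restriction.

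The main obstacle is the structural identification of $Y$ as a component domain in the second step: this rests on the full hierarchy machinery of \cite{MM2} (tight geodesics, subordinacy, and the backtracking/large link lemmas) and is not reducible to the elementary disjointness estimate, which on its own gives only a linear (and hence useless) bound on $\mathrm{diam}_Y(g)$. An alternative is to bypass hierarchies altogether with a direct surgery argument in the spirit of Hensel--Przytycki--Webb, iteratively simplifying the intersection of a candidate geodesic with $\partial Y$ to produce a low-intersection vertex; even so, the annular case and the uniformity of $M$ in $\xi(S)$ remain the delicate points of the argument.
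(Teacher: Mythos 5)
The paper does not prove this statement at all: it is quoted verbatim as Theorem 3.1 of \cite{MM2} (the Bounded Geodesic Image Theorem), with the only added remark being that \cite{W} upgrades $M$ to a constant independent of $S$. So there is no in-paper argument to compare against; the only question is whether your sketch would constitute a proof, and as written it would not.

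The pivotal step of your outline --- that there is a threshold $M_1(\xi(S))$ such that $d_Y(\mu_-,\mu_+)>M_1$ forces $Y$ to appear as a component domain of the hierarchy $H$, so that $\partial Y$ shows up as a vertex of a geodesic of $H$ --- is precisely the Large Link Lemma of \cite{MM2}. In that paper the hierarchy machinery (resolution of $\Sigma$-projections, subordinacy, the Large Link Lemma itself) is developed in Sections 4--6 and is proved \emph{using} Theorem 3.1, which is why the theorem is placed in Section 3 before hierarchies are even defined. Invoking component domains to prove the Bounded Geodesic Image Theorem is therefore circular: you are assuming the structural consequence whose proof requires the statement you are trying to establish. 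The actual routes are (i) Masur--Minsky's original direct argument, which uses hyperbolicity of $\C$ and the Lipschitz projection lemma (Theorem 2.3 here) but no hierarchies, or (ii) the elementary surgery/unicorn-arc argument of Hensel--Przytycki--Webb, which you mention only as an afterthought but which is in fact the self-contained route and the source of the uniform constant the paper actually uses. Two smaller points: your opening ``argue by contrapositive'' describes the direct implication, not its contrapositive; and in the trichotomy for a curve $v$ disjoint from $\partial Y$, the case ``$v$ lies entirely inside $Y$'' gives $\pi_Y(v)=\{v\}\neq\emptyset$, so it must be excluded by an actual argument, not by an appeal to ``the position of $v$ in the hierarchy.''
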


In \cite{W}, Webb proves there exists a uniform $M$ for Theorem 2.2, independent of $S.$ An important property of subsurface projections is that they are Lipschitz:

\begin{thm}[Lemma 2.3 of \cite{MM2}]
Let $Y \subset S$ be a domain and let $\Delta$ be a simplex in $\C$ with $\pi_Y(\Delta) \neq \emptyset$. Then \emph{diam}$_Y(\Delta) \leq 2$. If Y is an annulus then  \emph{diam}$_Y(\Delta) \leq 1$.
\end{thm}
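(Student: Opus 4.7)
The plan is to reduce the statement to the analogous bound in the arc-and-curve graph $\ArcY$, where it follows almost immediately from the definition of $\pi_Y'$, and then transfer it to $\CY$ using the coarse retraction $\psi_Y$ introduced above. The key geometric input is that the components of the multicurve $\Delta$, being simultaneously realizable as pairwise disjoint curves on $S$, give rise upon restricting to $Y$ to arcs (or curves) that are themselves pairwise disjoint in $Y$.

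Concretely, first I would represent $\Delta$ by pairwise disjoint simple closed curves on $S$, placed in minimal position with respect to $\partial Y$. For each component $\alpha$ of $\Delta$ with $\pi_Y(\alpha) \neq \emptyset$, the vertices of $\pi_Y'(\alpha) \subset \ArcoY$ are represented by the arc components of $\alpha \cap Y$ (or by $\alpha$ itself if $\alpha \subset Y$). Because the chosen representatives of distinct components of $\Delta$ are disjoint in $S$, their restrictions give representatives of the vertices of $\pi_Y'(\Delta)$ that are pairwise disjoint in $Y$, so any two distinct vertices of $\pi_Y'(\Delta)$ are adjacent in $\ArcoY$. This yields $\mbox{diam}_{\ArcY}(\pi_Y'(\Delta)) \leq 1$. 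Applying the coarse retraction $\psi_Y$, which sends adjacent vertices in $\ArcoY$ to subsets of diameter at most $2$ in $\CoY$, and recalling that $\pi_Y = \psi_Y \circ \pi_Y'$, I obtain $\mbox{diam}_Y(\pi_Y(\Delta)) \leq 2$.

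For the annular case the outline is the same but sharper, bypassing $\ArcY$ entirely: disjoint curves on $S$ lift to arcs in the annular cover of $Y$ whose geometric intersection numbers can be made to differ by at most one, which yields the bound of $1$ in the annular curve graph directly from its definition. The principal obstacle is not really the geometric content, which is standard, but rather the bookkeeping at the interface with $\partial Y$: one must verify that minimal position can be achieved simultaneously for all components of $\Delta$ so that the arcs representing $\pi_Y'(\Delta)$ are honestly disjoint in $Y$ and are in minimal position with $\partial Y$. This is a general-position argument and does not affect the strategy. Since the annular projection $\pi_Y$ for $\xi(Y) = -1$ is deferred to a later subsection, in a self-contained treatment I would postpone the annular case until that machinery is available, or else cite the corresponding estimate from \cite{MM2}.
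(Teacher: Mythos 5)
The paper does not prove this statement---it is quoted verbatim as Lemma 2.3 of \cite{MM2}---and your argument is essentially the standard proof from that reference: pairwise disjoint representatives of $\Delta$, put simultaneously in minimal position with $\partial Y$, restrict to pairwise disjoint essential arcs and curves in $Y$, so $\pi_Y'(\Delta)$ spans a simplex in $\ArcY$ and has diameter at most $1$ there, and the coarse retraction $\psi_Y$ (which moves adjacent vertices to sets of diameter at most $2$) then gives $\mathrm{diam}_Y(\Delta)\leq 2$. One small correction in the annular case: the relevant fact is that disjoint curves have disjoint lifts to $\widehat{Y}$, so any two arcs of $\pi_Y(\Delta)$ have intersection number $0$ and hence are at distance exactly $1$ by the identity $d_Y(\alpha,\beta)=|\alpha\cdot\beta|+1$; your phrase ``geometric intersection numbers can be made to differ by at most one'' is not the right formulation, though the conclusion is unaffected.
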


\subsection{Annular domains and projections}

Given an annular subsurface $Y$ in $S$, we say that $Y$ is an \textit{annular domain} if it has incompressible boundary and is not homotopic to a puncture. 

Let $Y$ be an annular domain with core curve $\gamma$ and let $\widehat{Y}$ be the compactification of the annular cover $\tilde{Y}$ of $S$ corresponding to $\gamma$ (i.e. the quotient of $\mathbb{H}^2 \cup S^1_\infty - \gamma^\pm$  by the action of the isometry corresponding to $\gamma,$ where $\gamma^\pm$ are the fixed points of said isometry). Let $\widehat{\gamma}$ be the core curve of $\widehat{Y}$.

\begin{figure}[ht]
    \centering
    \includegraphics[trim = 7.5cm 4cm 5cm 3.25cm, clip=true, scale=0.5]{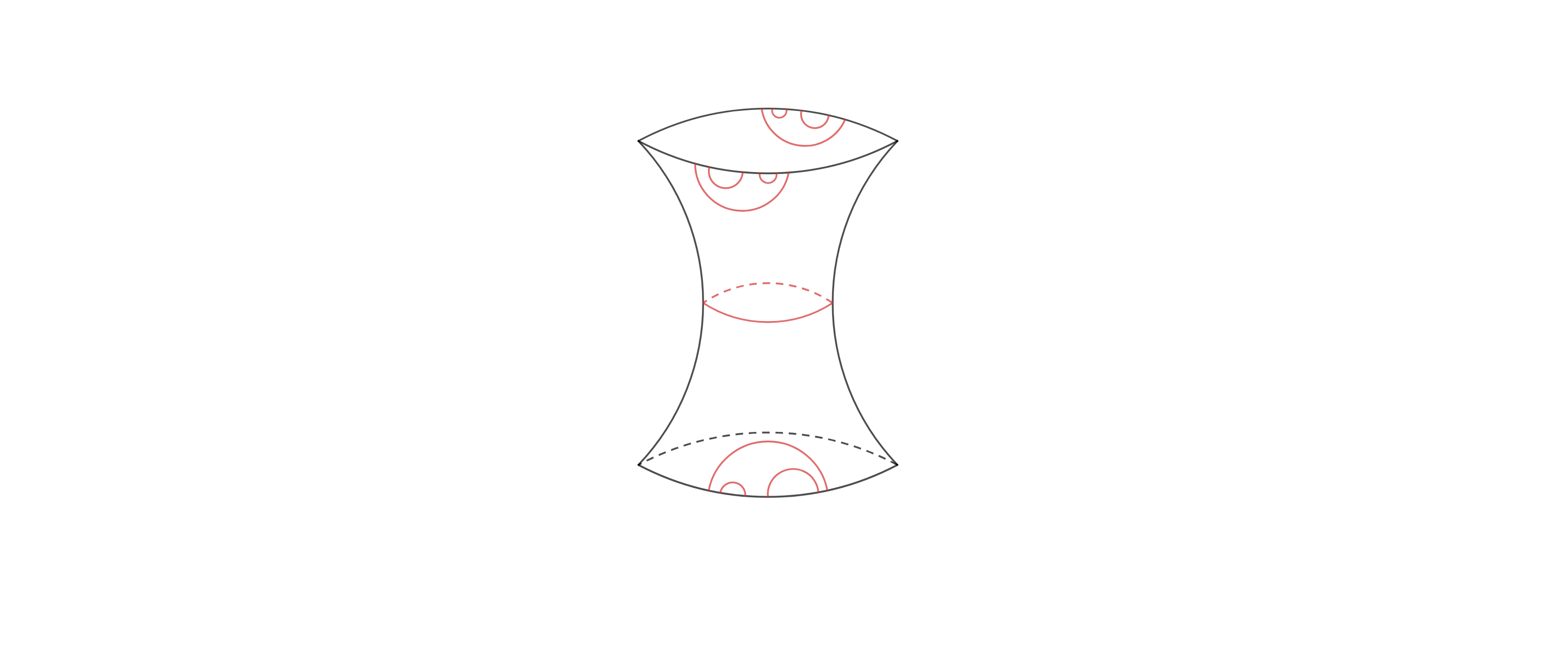}
    \caption{$\widehat{Y}$ with lifts of $\gamma$}
    \label{fig:Y^}
\end{figure}

We define the vertices of $\CY$ to be arcs connecting the two components of $\partial \widehat{Y}$ modulo homotopies fixing $\partial \widehat{Y}$ pointwise, and we add an edge of length 1 between any two vertices which have representatives with disjoint interiors.

Fix an orientation on $S$ and an ordering on the components of $\partial \widehat{Y}$. This allows us to define an algebraic intersection number $\alpha \cdot \beta$ for $\alpha, \beta \in \mathcal{C}_0(Y)$. One can show that

\begin{equation}
    d_Y(\alpha, \beta) = |\alpha \cdot \beta| + 1,
\end{equation}
for all $\alpha \neq \beta$ and

\begin{equation}
    \alpha \cdot \rho = \alpha \cdot \beta + \beta \cdot \rho + \varepsilon(\alpha, \beta, \rho),
\end{equation}
with $\varepsilon \in \{-1, 0, 1\}$ depending on the arrangement of the endpoints.

Using $(1), (2)$, we may construct a quasiisometry $f: \CY \to \Z$ by fixing $\rho \in \CoY$ and defining $f(\alpha) = \alpha \cdot \rho$. Then the identities give for all $\alpha, \beta \in \CoY$

\begin{equation*}
    |f(\alpha) - f(\beta)| \leq d_Y(\alpha, \beta) \leq |f(\alpha) - f(\beta)| + 2.
\end{equation*}

We now construct subsurface projections $\pi_Y: \Co \rightarrow \mathcal{P}(\CoY)$ for annuli. If $\alpha \in \Co$ does not intersect $Y$ essentially (notice this includes the core of  $Y$) then $\pi_Y(\alpha) = \emptyset$. If $\alpha \in \Co$ intersects $\gamma$ essentially, then at least one lift of $\alpha$ to $\widehat{Y}$ connects the components of $\partial \widehat{Y}$. Let $\pi_Y(\alpha)$ be the set of such lifts, which has diameter 1 in $\CY$.

Let $t_{\widehat{\gamma}}$ be the Dehn twist of $\widehat{Y}$ about its core $\widehat{\gamma}$. It follows from equation (1) 
that

\begin{equation*}
  d_Y(\alpha, t_{\widehat{\gamma}}^n(\alpha)) = |n|,
\end{equation*}
for all $\alpha \in \CoY$ and $n \in \mathbb{Z}$. Now notice that for $\beta \in \Co$ with $\pi_Y(\beta) \neq \emptyset$ we have 

\begin{equation}
   |n| - 5 \leq d_Y(\beta, t_\gamma^n(\beta)) \leq |n| + 5,
 \end{equation}
for all $n \in \Z.$ The reason for the coarseness of equation (3) is that the twist $t_\gamma$ of $S$ shifts each intersection of a lift of $\beta$ with all the lifts of $\gamma$. See figure 2.

\begin{figure}[h]
    \centering
    \includegraphics[trim = 7.5cm 3.5cm 8cm 3.1cm, clip = true, scale = .5]{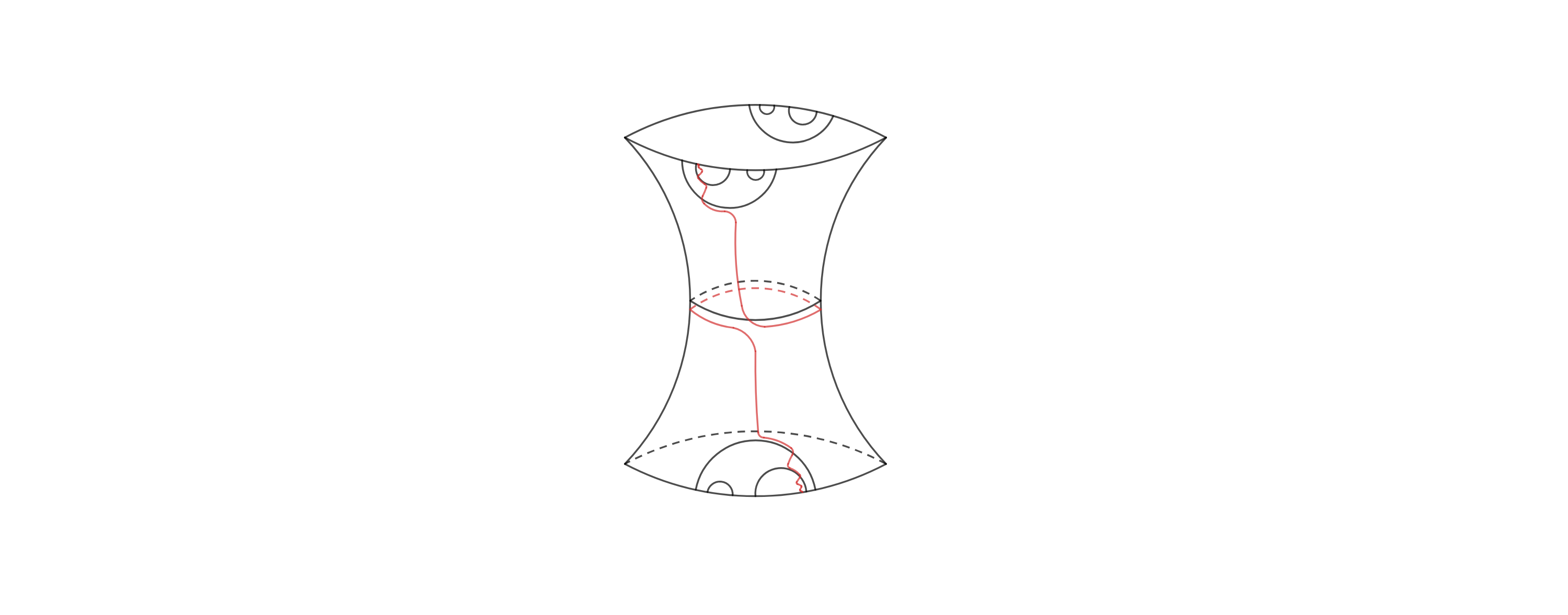}
    \caption{Lift of $t_\gamma (\beta)$}
    \label{fig:lift of dehn twist}
\end{figure}

\subsection{Disconnected domains.}

Let $A$ be a multicurve with components $\alpha_1, ... \alpha_k$ and let $Y_i$ be the annular domain corresponding to $\alpha_i$. We define

\begin{equation*}
   \mathcal{C}(A) = \mathcal{C}(Y_1) \times ... \times \mathcal{C}(Y_k),
\end{equation*}
and 

\begin{equation*}
   \pi_A = \pi_{Y_1} \times ... \times\pi_{Y_k}.
\end{equation*}

We define $d_A$ to be the $L^1$-metric. That is, for $\alpha, \beta \in \C$ such that $\pi_{Y_i}(\alpha), \pi_{Y_i}(\beta) \neq \emptyset$ for all $1 \leq i \leq k$ we have

\begin{equation*}
    d_A (\alpha, \beta) = \sum_{i=1}^k d_{Y_i}(\alpha, \beta).
\end{equation*}

Similarly define $(\mathcal{C}(Z), d_Z)$ with $Z$ a union of disjoint domains $Z_1, ..., Z_k \subset S.$

\section{Proving Theorem 1.1}

\subsection{Relative hyperbolicity of $H_A * H_B$} 
Let $X$ be a space composed of tori $T_A, T_B$ with dimensions equal to the ranks of $H_A, H_B$, respectively, connected by an edge $[0, 1]$ at points $t_A$, $t_B$ on $T_A, T_B$, respectively. Notice $X$ is a Salvetti complex for $H_A * H_B$ with a point blown up to an edge. Clearly $\pi_1(X) \cong H_A * H_B$. Fix a basepoint $x = \frac{1}{2}$. Let $p: (\widetilde{X}, \tilde{x}) \to (X,x)$ denote the universal cover. Notice each component of the preimage of $T_A, T_B$ corresponds to a left coset $gH_A, gH_B,$ respectively. Collapsing each of these components in $\widetilde{X}$ to points $ga, gb$ gives us a tree $T$. Let $W_A, W_B$ denote the set of all such $ga, gb,$ respectively and define $W = W_A \cup W_B$.

Let $\G$ be the Cayley graph of $H_A * H_B \cong \Z^n * \Z^m.$  Notice that $T$ is quasiisometric to $\Ghat(H_A * H_B, \{H_A, H_B\})$. Using the action of $H_A * H_B$ on $T$, it's easy to see that $H_A * H_B$ is hyperbolic relative to $\{H_A, H_B\}$ via Bowditch's definition. Alternatively, it's also easy to see directly via Farb's definition that $H_A * H_B$ is hyperbolic relative to $\{H_A, H_B\}$.

\subsection{Quasiisometrically embedding $T$ into $\C$} Recall our first theorem:

\begin{mainmain}
\textit{There exists a constant $D_0 \geq 3$, independent of S, with the following property. If $d_S (A,B) \geq D_0$, then  $\Phi: H_A * H_B \to G$ is injective, hence an isomorphism, and G is PGF. Moreover, any element not conjugate into a factor is pseudo-Anosov.}
\end{mainmain}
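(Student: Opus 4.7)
The plan is to build an $H_A*H_B$-equivariant map $\phi \colon T \to \C$ from the Bass-Serre tree $T$ of Section~3.1 to the curve graph and show, for $D_0$ taken sufficiently large, that $\phi$ is a quasi-isometric embedding. On vertices, I would set $\phi(g \cdot a) = \Phi(g)(A)$ and $\phi(g \cdot b) = \Phi(g)(B)$; this is well defined since the stabilizers of $a$ and $b$ in $H_A*H_B$ are $H_A$ and $H_B$, which fix $A$ and $B$ respectively. Adjacent vertices of $T$ have the form $(g\cdot a, g\cdot b)$, so their images are at $\C$-distance exactly $d_S(A,B) \geq D_0$, and in particular $\phi$ is coarsely Lipschitz.

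The substantive step is the lower bound for the quasi-isometric embedding. After moving by the group action, I would consider a geodesic $v_0, v_1, \ldots, v_k$ in $T$ with $v_0 = a$ realizing a reduced word $g = h_1 \cdots h_k$, and aim for $d_S(\phi(v_0), \phi(v_k)) \succeq k$. By equivariance, each consecutive triple of images can be moved to one of two model forms: $(B, A, h(B))$ with $h \in H_A \setminus \{1\}$, or $(A, B, h'(A))$ with $h' \in H_B \setminus \{1\}$. To control such a triple I apply the bounded geodesic image theorem (Theorem~2.2, in Webb's uniform form) to annular subsurfaces around components of the central multicurve, with annular projections of the multitwist computed from equations $(1)$--$(3)$ of Section~2.6. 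The outcome is that any geodesic in $\C$ between the outer points of such a triple must fellow-travel the central multicurve; chaining this along the path, using $\delta$-hyperbolicity of $\C$ and the largeness of each consecutive segment ($\geq D_0 \gg \delta, M$), I obtain a uniform lower bound $d_S(\phi(v), \phi(v')) \geq \tfrac{1}{K} d_T(v, v') - C$.

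Each conclusion of the theorem then follows from the quasi-isometric embedding. For injectivity of $\Phi$: if $w$ is not conjugate into a factor, then $w$ acts hyperbolically on $T$, so $d_T(w^n \cdot a, a) \to \infty$; a hypothetical $\Phi(w) = 1$ would force $\phi(w^n \cdot a) = \phi(a)$ for all $n$, contradicting the lower bound; and if $w$ is conjugate into a factor, $\Phi(w)$ is a $\Mod(S)$-conjugate of a nontrivial multitwist, hence nontrivial. For parabolic geometric finiteness: Section~3.1 identifies $T$ with $\Ghat(H_A*H_B, \{H_A, H_B\})$ up to quasi-isometry, so $\phi$ descends to the equivariant quasi-isometric embedding required by the second clause of the PGF definition, while the Abelian multitwist structure of $H_A, H_B$ handles the first. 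For the pseudo-Anosov claim: any element not conjugate into a factor is hyperbolic on $T$, hence by the quasi-isometric embedding acts loxodromically on $\C$, which by Masur-Minsky is equivalent to being pseudo-Anosov.

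The hard part will be the consecutive-triple estimate when the multitwist $h$ has only small twist coefficients: in that case the annular projection of $h$ is small, $d_S(B, h(B))$ can be as small as $3$ (just enough to force $A \cup B$ to fill), and the naive Gromov-product bound against the central multicurve is borderline. Overcoming this requires taking $D_0$ much larger than both $\delta$ and the BGIT constant $M$, exploiting the hyperbolicity of $\C$ so that the large consecutive segments survive even when individual triples contribute only weakly. Calibrating $D_0$ uniformly over all nontrivial $h$ in either factor is the central technical obstacle, and presumably accounts for the theorem's requirement of a (potentially much) larger $D_0$ than the minimum value $3$ that merely ensures $A \cup B$ fills.
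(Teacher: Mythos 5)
Your skeleton matches the paper's proof almost exactly: the same equivariant map $T \to \C$ sending $ga \mapsto \Phi(g)\alpha_1$ and $gb \mapsto \Phi(g)\beta_1$ (the paper uses single components of $A$, $B$ realizing $d_S(A,B)$ rather than the full multicurves, which is immaterial), the same reduction to showing that pairs of adjacent edges map to uniform local quasigeodesics followed by the Coornaert--Delzant--Papadopoulos local-to-global theorem, and the same derivations of injectivity, parabolic geometric finiteness, and the pseudo-Anosov conclusion from the quasi-isometric embedding. The gap is exactly where you flag it, and your proposed way around it does not work: if a single triple $(\alpha_1, \beta_1, \Phi(h)\alpha_1)$ contributed only weakly --- that is, if $d_S(\alpha_1, \Phi(h)\alpha_1)$ could stay bounded as $D \to \infty$ --- then no amount of ambient hyperbolicity would rescue the global lower bound, because the local-to-global principle requires each two-edge subpath to already be a quasigeodesic with constants independent of $D$, and a path that backtracks on a single pair of edges is not one. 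You cannot defer the small-twist-coefficient problem to the chaining step; it must be solved at the level of a single triple.

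The missing idea (Lemmas 3.1 and 3.2 of the paper) is to apply the bounded geodesic image theorem not to $\Phi(h)$ but to a fixed power $\Phi(h)^N$ with $N > M+5$. Since $h \neq 1$, $\Phi(h)$ has positive translation length in the annular complex $\mathcal{C}(Y)$ of some component $\beta_2$ of $B$, so $d_Y(x, \Phi(h)^N(x)) \geq N - 5 > M$ for any $x$ with nonempty projection; the bounded geodesic image theorem then forces every geodesic $[x, \Phi(h)^N(x)]$ to pass within distance $1$ of $\beta_2$, which gives $d_S(x, \Phi(h)^N(x)) \geq 2 d_S(x,\beta_1) - 4$, and the triangle inequality divides by $N$ to give $d_S(\alpha_1,\Phi(h)(\alpha_1)) \geq (2D-4)/N$ for a single application of $h$. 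The paper then removes the $1/N$ loss by a $\delta$-thin triangle argument: the corner point $x \in [\alpha_1,\beta_1]$ of the triangle $(\alpha_1,\beta_1,\Phi(h)(\alpha_1))$ satisfies $d_S(x,\Phi(h)(x)) \leq 2\delta$, and the same power-of-$h$ argument shows any such point must lie within $N\delta + 2$ of $\beta_1$, yielding $d_S(\alpha_1,\Phi(h)(\alpha_1)) \geq 2D - 2((N+1)\delta+2)$, i.e., adjacent edges map to $(1, O(\delta))$-local quasigeodesics with constants independent of $D$ and of $S$ (using Webb's uniform $M$ and the uniform hyperbolicity of $\C$). With that estimate supplied, the rest of your outline goes through as written.
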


Let $D = d_S(A, B) = \mbox{diam}_S(A,B)$. Let $\alpha_1, ..., \alpha_{n'}$ and $\beta_1, ..., \beta_{m'}$ denote the components of $A$ and $B$, respectively. Without loss of generality, we may assume that $d_S(\alpha_1, \beta_1) = D$. The choices of $\alpha_1$ and $\beta_1$ are unimportant, we are simply choosing vertices of the simplices $A,B$ in $\C$. We first map $T$ to the scaled up tree $T_D$, isomorphic to $T$ but with edges of length $D$, with the map defined linearly on edges. Call this map $\phi_D$, which is a $(D,0)$-quasiisometry. $H_A*H_B$ also acts on $T_D$ so $\phi_D$ is equivariant. We abuse notation and also let $W$ denote the vertices of $T_D$.

The natural homomorphism $\Phi: H_A * H_B \to G = \langle H_A, H_B \rangle \subset \Mod$ induces an action of $H_A * H_B$ on $\C$. Define $\phi': T_D \to \mathcal{C}(S)$ by $\phi'(ga) = \Phi(g)\alpha_1, \phi'(gb) = \Phi(g)\beta_1,$ and fixing once and for all a geodesic $[\alpha_1, \beta_1]$, we have isometric embeddings $\phi'(g[a,b]) = \Phi(g)[\alpha_1, \beta_1]$. We claim that $\phi'$ is a quasiisometric embedding if $D$ is sufficiently large. We show this by showing that unions of adjacent edges in $T_D$ map to $D$-local quasigeodesics. It is a well-known result of Coornaert, Delzant, Papadopoulos \cite{CDP} that $D$-local uniform quasigeodesics are global quasigeodesics for $D$ sufficiently large.

\begin{lemma}
Let $h \in H_B -\{1\}$. Then

\begin{equation*}
    d_S(\alpha_1, \Phi(h)(\alpha_1)) \geq \frac{2D- 4}{N},
\end{equation*}
where $N>M + 5$ and $M$ is the constant from Theorem 2.2.
\end{lemma}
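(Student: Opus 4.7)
My plan is to analyze a $\C$-geodesic $g$ from $\alpha_1$ to $\Phi(h)(\alpha_1)$ by a dichotomy on how close $g$ comes to the components of $B$. The key observation is that $\Phi(h)$ is a multitwist about $B$, so it fixes each component $\beta_j$ of $B$ as a curve; consequently $d_S(\Phi(h)(\alpha_1),\beta_j)=d_S(\alpha_1,\beta_j)$, and since the $\beta_j$'s are pairwise disjoint and $d_S(\alpha_1,\beta_1)=D$ realizes the diameter of $A\cup B$, we get $d_S(\alpha_1,\beta_j)\geq D-1$ for every $j$.

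The first case is that some vertex $v$ of $g$ lies within $\C$-distance $1$ of some $\beta_j$. The triangle inequality then yields
\[
|g|\;\geq\;d_S(\alpha_1,v)+d_S(v,\Phi(h)(\alpha_1))\;\geq\;(D-2)+(D-2)\;=\;2D-4,
\]
already exceeding $(2D-4)/N$. The complementary case is that $g$ stays at distance at least $2$ from every $\beta_j$, so $\pi_{Y_{\beta_j}}$ is defined at every vertex of $g$; bounded geodesic image (Theorem 2.2) then gives $d_{Y_{\beta_j}}(\alpha_1,\Phi(h)(\alpha_1))\leq M$ for every $j$. I would fix an index $j$ with $n_j\neq 0$ and decompose $\Phi(h)=t_{\beta_j}^{n_j}\circ h'$ with $h'=\prod_{i\neq j}t_{\beta_i}^{n_i}$. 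Because each $t_{\beta_i}$ for $i\neq j$ can be realized as a homeomorphism supported in an annular neighborhood of $\beta_i$ disjoint from $\beta_j$, it shifts the annular projection to $Y_{\beta_j}$ by only a bounded amount; combined with equation (3) applied to $t_{\beta_j}^{n_j}$, this gives a lower bound of the form $d_{Y_{\beta_j}}(\alpha_1,\Phi(h)(\alpha_1))\geq |n_j|-C$ for a uniform constant $C$. Pairing this with BGI forces $|n_j|\leq M+C$ in the complementary case.

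The principal obstacle is converting this constraint on $|n_j|$ into the required lower bound on $|g|$. The intended mechanism is that the complementary case is automatically excluded once $|n_j|\geq N$, so any $h$ with a sufficiently large exponent is forced into the first case and its clean bound $|g|\geq 2D-4$ applies. The factor $1/N$ in the target bound $(2D-4)/N$ then accommodates the residual $h$'s whose exponents are all below $N$: here the hypothesis $d_S(\alpha_1,\beta_j)\geq D-1$ forces the geometric intersection $i(\alpha_1,\beta_j)$ to grow exponentially in $D$, so nontriviality of the multitwist $\Phi(h)$ still makes $i(\alpha_1,\Phi(h)(\alpha_1))$ exponentially large in $D$, which via the standard logarithmic relationship between intersection number and curve graph distance gives a linear-in-$D$ lower bound on $|g|$. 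The constant $N>M+5$ is tuned large enough to absorb the multiplicative constants from this estimate, so that $(2D-4)/N$ is uniformly dominated in every case.
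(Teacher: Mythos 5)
Your Case 1 is sound and your appeal to the contrapositive of the bounded geodesic image theorem in Case 2 is the right instinct, but the argument breaks exactly where you flag the ``principal obstacle.'' When the net exponents are small --- e.g.\ $h$ is a single generator $b_1$, so $\Phi(h)$ twists only $O(1)$ times about each $\beta_j$ --- BGI yields no contradiction, and your fallback mechanism for these residual $h$ is invalid: the relation $d_S(\alpha,\beta) \leq 2\log_2 i(\alpha,\beta) + 2$ bounds curve-graph distance \emph{from above} by intersection number, so showing $i(\alpha_1,\Phi(h)(\alpha_1))$ is exponentially large in $D$ gives no lower bound whatsoever on $d_S(\alpha_1,\Phi(h)(\alpha_1))$; two curves can intersect enormously and still lie at distance $2$. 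Since the small-exponent case is the entire content of the lemma, this is a genuine gap, not a constant to be tuned away by enlarging $N$.

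The paper's fix is a single idea you are missing: run your ``Case 2 forces Case 1'' mechanism on the power $\Phi(h)^N$ rather than on $\Phi(h)$. If $\Phi(h)$ has nonzero net twisting $N'$ about some component $\beta_2$, then equation (3) gives $d_{Y_2}(x,\Phi(h)^N(x)) \geq |NN'| - 5 \geq N - 5 > M$, so BGI forces every geodesic $[x,\Phi(h)^N(x)]$ to pass within distance $1$ of $\beta_2$; your Case 1 triangle inequality then yields $d_S(x,\Phi(h)^N(x)) \geq 2\,d_S(x,\beta_1) - 4$, and the equivariant telescoping bound $d_S(x,\Phi(h)^N(x)) \leq N\, d_S(x,\Phi(h)(x))$ produces precisely the factor $1/N$ in the statement. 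That factor is the price of passing to the $N$-th power, not a cushion for multiplicative constants from an intersection-number estimate. With this substitution your dichotomy collapses to the paper's argument (the ``far from $B$'' branch becomes vacuous for the power), and the rest of your outline goes through.
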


\begin{proof} Let $H_B = \langle b_1, ..., b_m\rangle$. Here each $b_i = t^{x^1_i}_{\beta_1}...t^{x^{m'}_i}_{\beta_{m'}} $ is a multitwist about the components $\{\beta_1, \beta_2,..., \beta_{m'} \}$ of $B$. We have $\Phi(h) = {b'}_1^{N_1},..., {b'}_k^{N_k}$ with $b'_i \in \{ b_1,..., b_m\}$ and $N_i \in \Z$. Let $Y_i$ be the corresponding annular subsurface of $S$ corresponding to $\beta_i$. Since $h \neq 1,$ $\Phi(h)$ acts with positive translation length on some $Y_i$. Without loss of generality, assume $\Phi(h)$ acts with positive translation length on $Y = Y_2$. One might want to choose $Y = Y_1$, but we choose the annular subsurface corresponding to $\beta_2$ to emphasize that the following argument doesn't depend on our already chosen $\beta_1$ ($D = d_S (\alpha_1, \beta_1)$).

Let $x \in [\alpha_1, \beta_1] -$st$(\beta_2)$. We remove st$(\beta_2)$ to ensure $\pi_Y(x) \neq \emptyset$. It follows from equation (3) and the fact that $t_{\beta_i}$ acts with bounded orbit on $\CY$ for $i \neq 2$ that for $N \neq 0$ we have
\begin{equation*}
   |NN'| - 5 \leq d_Y (x, \Phi(h)^N(x)) \leq |NN'| + 5,
\end{equation*}
where $N'$ is the net number of $t_{\beta_2}$'s that appear in $\Phi(h)$. Choose $N$ sufficiently large so that $|NN'| - 5 > M.$ The contrapositive of Theorem 2.2 gives that any geodesic $[x, \Phi(h)^N(x)]$ must pass close to $\beta_2$ in $\C$. Specifically there is a vertex $\gamma \in [x, \Phi(h)^N(x)]$ such that $\pi_Y (\gamma) = \emptyset$ and hence $d(\beta_2, \gamma) \leq 1$. This gives us

\begin{figure}[h]
    \centering
    \includegraphics[trim = 50cm 20cm 50cm 18cm, clip=true, scale = .3]{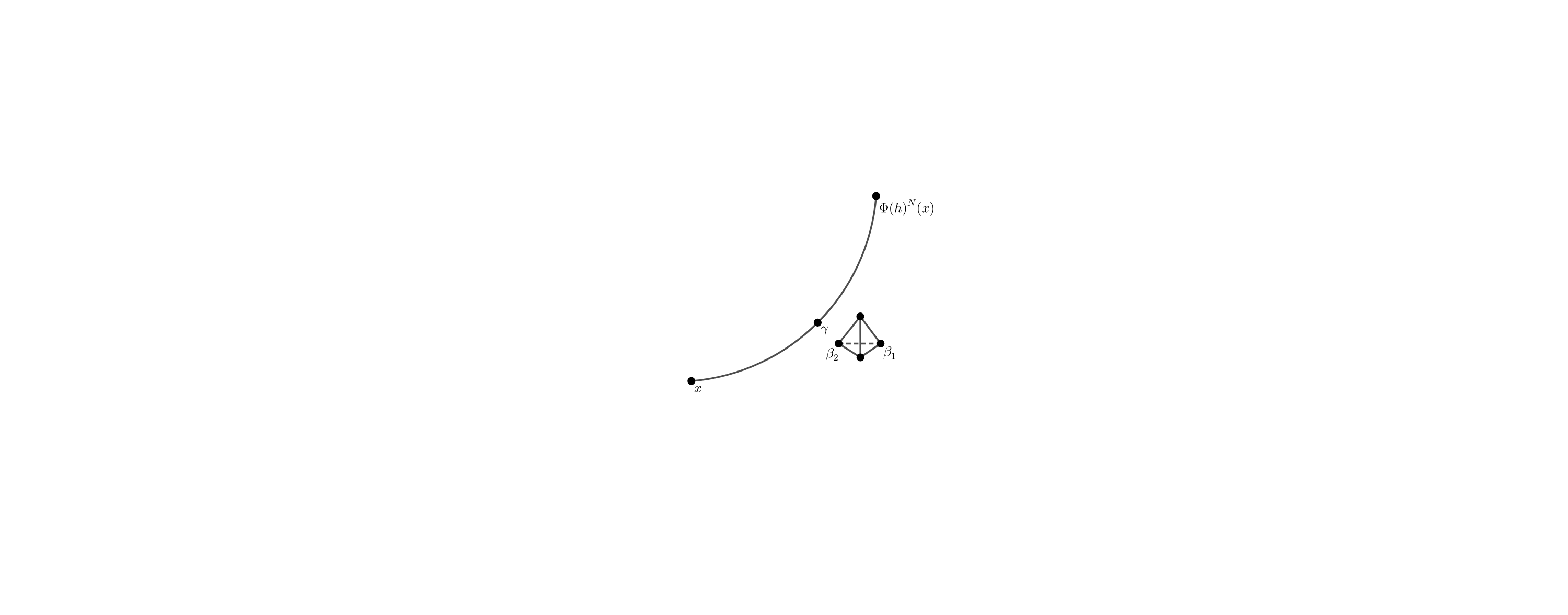}
    \caption{$d_S(x,\Phi(h)^N(x)) \geq 2d_S(x, \beta_1) - 4$}
    \label{fig:d(x, phi(h)^N(x))}
\end{figure}

\begin{equation*}
    d_S(x, \Phi(h)^N(x)) \geq 2d_S(x, \beta_1) -4.
\end{equation*}







The triangle inequality gives us

\begin{equation*}
    d_S(x, \Phi(h)(x)) \geq \frac{2d_S(x, \beta_1)- 4}{N}.
\end{equation*}

The same argument shows that taking $N > M + 5$ gives the above inequality for any $h \in H_B - \{1\}.$ Taking $x = \alpha_1$ gives us

\begin{equation*}
    d_S(\alpha_1, \Phi(h)(\alpha_1)) \geq \frac{2D- 4}{N}.
\end{equation*}
\end{proof}
We can get a better bound for $d(\alpha_1, \Phi(h)(\alpha_1))$ by using hyperbolicity of $\C$.

\begin{lemma}
Let $h \in H_B -\{1\}$. Then

\begin{equation*}
    d_S(\alpha_1, \Phi(h)(\alpha_1)) \geq 2D - 2((N+1)\delta +2),
\end{equation*}
where $N>M + 5$ and $M$ is the constant from Theorem 2.2. Here $\delta$ is the hyperbolicity constant of $\C.$
\end{lemma}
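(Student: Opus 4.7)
The plan is to recycle the geometric setup from Lemma 3.1 and then exploit $\delta$-hyperbolicity of $\mathcal{C}(S)$ to replace the crude bound $d_S(\alpha_1, \Phi(h)^N(\alpha_1)) \leq N \cdot d_S(\alpha_1, \Phi(h)(\alpha_1))$, which was responsible for the loss by a factor of $N$. Choosing the same annular domain $Y = Y_2$ and the same integer $N > M+5$ as in Lemma 3.1, that proof already produces a vertex $\gamma$ on a fixed geodesic $[\alpha_1, \Phi(h)^N(\alpha_1)]$ with $d_S(\gamma, \beta_2) \leq 1$.

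Next I would form the geodesic $(N+1)$-gon whose vertices are $\alpha_1, \Phi(h)(\alpha_1), \ldots, \Phi(h)^N(\alpha_1)$, taking the fixed geodesic $[\alpha_1, \Phi(h)^N(\alpha_1)]$ as one side and arbitrary geodesic segments $[\Phi(h)^j(\alpha_1), \Phi(h)^{j+1}(\alpha_1)]$ as the remaining $N$ sides. The standard consequence of $\delta$-thin triangles, iterated across a triangulation of the polygon, is that the long side lies in the $(N+1)\delta$-neighborhood of the union of the other sides. Thus there exist an index $0 \leq j \leq N-1$ and a point $p$ on the segment $[\Phi(h)^j(\alpha_1), \Phi(h)^{j+1}(\alpha_1)]$ with $d_S(p, \gamma) \leq (N+1)\delta$, hence $d_S(p, \beta_2) \leq (N+1)\delta + 1$.

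Now I would transfer this back to the first segment by translating. Because $\Phi(h)$ is a product of Dehn twists about the disjoint components of $B$, it fixes each $\beta_i$ setwise, so it acts on $\mathcal{C}(S)$ by an isometry that preserves $\beta_2$. Setting $q := \Phi(h)^{-j}(p)$, this places $q$ on a geodesic from $\alpha_1$ to $\Phi(h)(\alpha_1)$ with $d_S(q, \beta_2) \leq (N+1)\delta + 1$. Since $q$ lies on a geodesic between these two points,
\begin{equation*}
d_S(\alpha_1, \Phi(h)(\alpha_1)) = d_S(\alpha_1, q) + d_S(q, \Phi(h)(\alpha_1)) \geq d_S(\alpha_1, \beta_2) + d_S(\Phi(h)(\alpha_1), \beta_2) - 2\bigl((N+1)\delta + 1\bigr).
\end{equation*}
Finally, $d_S(\alpha_1, \beta_2) \geq D - 1$ because $\beta_1$ and $\beta_2$ are disjoint components of $B$, while $d_S(\Phi(h)(\alpha_1), \beta_2) = d_S(\alpha_1, \beta_2) \geq D - 1$ because $\Phi(h)^{-1}$ fixes $\beta_2$. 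Substituting gives $d_S(\alpha_1, \Phi(h)(\alpha_1)) \geq 2D - 2\bigl((N+1)\delta + 2\bigr)$, as claimed.

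The only non-formal step is verifying that $\Phi(h)^{-j}$ sends the chosen segment to a geodesic between $\alpha_1$ and $\Phi(h)(\alpha_1)$ and preserves distance to $\beta_2$; both are immediate once one notes that $\Phi(h)$ acts by an isometry of $\mathcal{C}(S)$ and fixes $\beta_2$ as a vertex. Everything else amounts to the slim-polygons estimate together with two triangle inequalities, so I do not expect any serious obstacle.
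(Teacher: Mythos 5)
Your proof is correct, but it takes a genuinely different route from the paper's. The paper works with the single $\delta$-thin triangle $(\alpha_1, \beta_1, \Phi(h)(\alpha_1))$ and exploits the fact that $\Phi(h)$ fixes $\beta_1$, so the two legs emanating from $\beta_1$ are $\Phi(h)$-translates of one another; the thin-triangle point $x$ then satisfies $d_S(x,\Phi(h)(x))\leq 2\delta$, and re-running the Lemma 3.1 estimate \emph{at $x$} forces $d_S(x,\beta_1)\leq N\delta+2$, so the geodesic $[\alpha_1,\Phi(h)(\alpha_1)]$ passes within $(N+1)\delta+2$ of $\beta_1$ itself. You instead take the slim $(N+1)$-gon on the orbit $\{\Phi(h)^j(\alpha_1)\}$, use only the already-established output of Lemma 3.1's bounded-geodesic-image step (the vertex $\gamma$ near $\beta_2$ on the long side $[\alpha_1,\Phi(h)^N(\alpha_1)]$), push $\gamma$ to a short side, and translate back by $\Phi(h)^{-j}$ using that $\Phi(h)$ fixes $\beta_2$. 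Your bookkeeping checks out: you lose $1$ twice from $d_S(\alpha_1,\beta_2)\geq D-1$ (you are anchored at $\beta_2$ rather than at $\beta_1$, which realizes $D$), but you absorb this by quoting the slim-polygon constant as $(N+1)\delta$ where the sharp value for an $(N+1)$-gon is $(N-1)\delta$, and the final arithmetic lands exactly on $2D-2((N+1)\delta+2)$. A modest advantage of your version is that it only invokes Lemma 3.1's conclusion at the base point $\alpha_1$, whereas the paper's ``as in the proof of Lemma 3.1'' step tacitly requires the intermediate point $x\in[\alpha_1,\beta_1]$ to have nonempty projection to $Y_2$; the cost is that you need the slim-polygons estimate for $(N+1)$-gons rather than just thin triangles, and your neighborhood constant degrades linearly in $N$ for the same structural reason the paper's does.
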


\begin{proof}
Since $\C$ is $\delta$-hyperbolic, any triangle $(\alpha_1, \beta_1, \Phi(h)(\alpha_1))$ is $\delta$-thin so we can find points $x \in [\alpha_1, \beta_1]$, $y \in [\beta_1, \Phi(h)(\alpha_1)]$, $z \in [\alpha_1, \Phi(h)(\alpha_1)]$ with $d_S(x,y), d_S(x,z) \leq \delta$. We'll show that $x$ is uniformly close to $\beta_1$ and hence $d_S(\alpha_1, \Phi(h)(\alpha_1))$ is roughly $2D$. See figure 4.

\begin{figure}[h]
    \centering
    \includegraphics[trim = 10.6cm 6.5cm 9cm 4cm, clip = true, scale = 1]{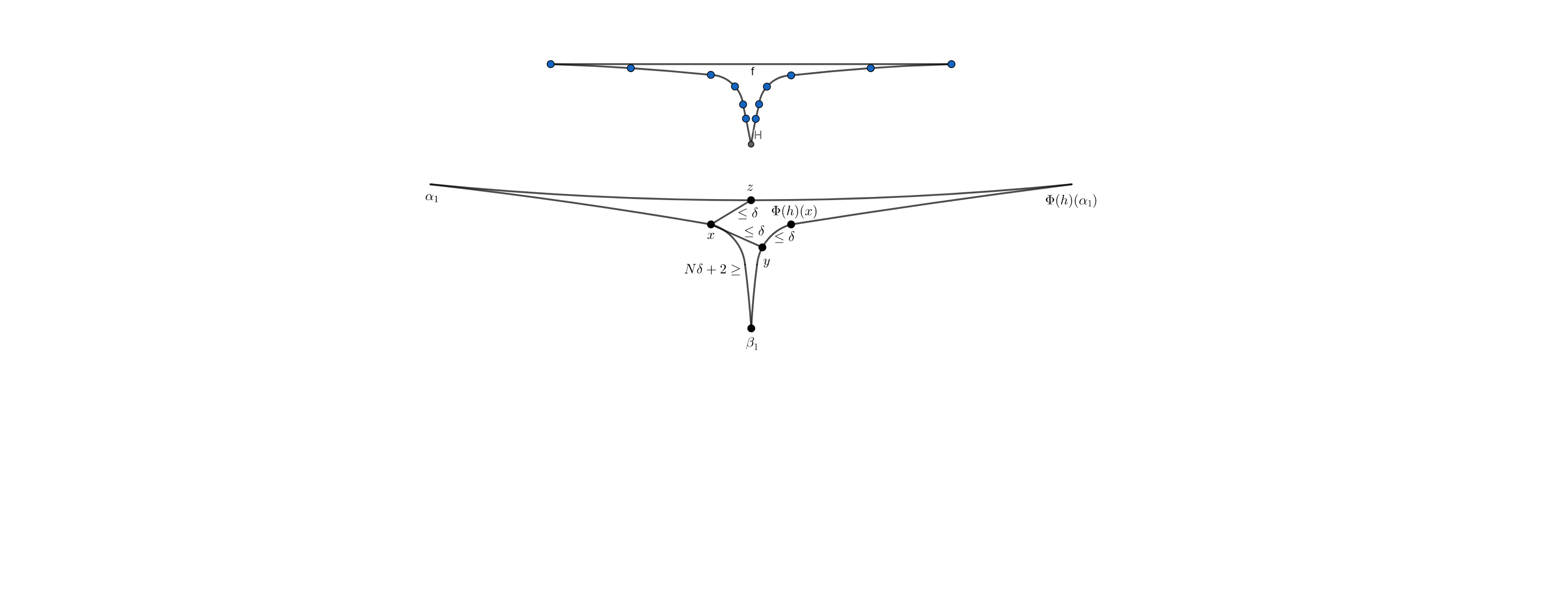}
    \caption{$d_S(\alpha_1, \Phi(h)(\alpha_1))$}
    \label{fig:d(a1, phi(h)(a1))}
\end{figure}

If $x \in$ st$(\beta_1)$ then we're done since this gives

\begin{equation*}
    d_S(\alpha_1, \Phi(h)(\alpha_1)) \geq 2d_S(\alpha_1, \beta_1) - 2(1 +  \delta),
\end{equation*}
since this implies $d_S(z, \beta_1) \leq 1 + \delta$ and $z \in [\alpha_1, \Phi(h)(\alpha_1)].$ Otherwise, the triangle inequality gives

\begin{equation*}
    d_S(x, \beta_1) - \delta \leq d_S(y, \beta_1) \leq d_S(x, \beta_1) + \delta.
\end{equation*}

Since $d_S(x, \beta_1) = d_S(\Phi(h)(x), \beta_1)$ and $y, \Phi(h)(x) \in [\beta_1, \Phi(h)(\alpha_1)]$ we get

\begin{equation*}
    d_S(y, \Phi(h)(x)) \leq \delta.
\end{equation*}

Then by the triangle inequality we have

\begin{equation*}
    d_S(x, \Phi(h)(x)) \leq 2\delta.
\end{equation*}

As in the proof of Lemma 3.1 we get


\begin{equation*}
    d_S(x,\beta_1) \leq N\delta + 2.
\end{equation*}

Hence 

\begin{equation*}
   d_S(z, \beta_1) \leq (N + 1)\delta + 2,
\end{equation*}
and since $z \in [\alpha_1, h(\alpha_1)]$ this gives 
\begin{equation*}
    d_S(\alpha_1, \Phi(h)(\alpha_1)) \geq 2D - 2((N+1)\delta +2).
\end{equation*}
\end{proof}

So we can indeed ensure that $d_S(\alpha_1, \Phi(h)(\alpha_1))$ is as large as we want by making $D = d_S(A, B)$ large. The same argument gives $d_S(\beta_1, \Phi(g)(\beta_1)) \geq 2D - 2((N+1)\delta +2)$ for any $g \in H_A - \{1\}.$ Hence the adjacent edges are mapped uniformly close to geodesics. We now prove that adjacent edges map to uniform quasigeodesics.

\begin{lemma}
The map $\phi': T_D \to \C$ maps geodesics to $D$-local $(1, 13\delta + 2C_0)$-quasigeodesics where $C_0 = (N+1)\delta  +2$, $N > M + 5$, and $M$ is the constant from Theorem 2.2.
\end{lemma}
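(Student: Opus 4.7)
The plan is to verify the $D$-local $(1, 13\delta + 2C_0)$-quasigeodesic condition on arbitrary subintervals of $T_D$ of length at most $D$. Since edges of $T_D$ have length $D$, any such subinterval either (i) is contained in a single edge, in which case $\phi'$ is isometric by construction, or (ii) crosses exactly one vertex $v$. Case (i) is immediate. For case (ii), write the subinterval as $[x, v] \cup [v, y]$ with lengths $s, t \geq 0$ summing to at most $D$. By $G$-equivariance we may assume $v = a$, so $\phi'(v) = \alpha_1$, and the two edges at $v$ have the form $[a, h_1 b]$ and $[a, h_2 b]$ for distinct $h_1, h_2 \in H_A$ (distinct since $h_1 H_B \neq h_2 H_B$ in the free product). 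Their images $\phi'(x)$ and $\phi'(y)$ lie on the geodesics $[\alpha_1, \Phi(h_1)\beta_1]$ and $[\alpha_1, \Phi(h_2)\beta_1]$ at distances $s$ and $t$ from $\alpha_1$, respectively.

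The upper bound $d_S(\phi'(x), \phi'(y)) \leq s + t = d_{T_D}(x, y)$ is immediate from the triangle inequality, so only the lower bound requires work. The key step is to bound the Gromov product at $\alpha_1$ of the two ``opposite'' vertices of the triangle. Since $H_A$ fixes $\alpha_1$, we have $d_S(\alpha_1, \Phi(h_i)\beta_1) = D$ for $i = 1, 2$, while
\[
d_S(\Phi(h_1)\beta_1, \Phi(h_2)\beta_1) = d_S(\beta_1, \Phi(h_1^{-1} h_2)\beta_1) \geq 2D - 2C_0,
\]
where the lower bound is the $H_A$-analogue of Lemma 3.2 (explicitly noted in the remark following that lemma), applied to $h_1^{-1} h_2 \in H_A \setminus \{1\}$. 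Therefore
\[
(\Phi(h_1)\beta_1 \mid \Phi(h_2)\beta_1)_{\alpha_1} = \frac{1}{2}\bigl(2D - d_S(\Phi(h_1)\beta_1, \Phi(h_2)\beta_1)\bigr) \leq C_0.
\]

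The lower bound now follows from a standard thin-triangle/tripod comparison in the $\delta$-hyperbolic space $\C$: for points $p, q$ on the two sides of the triangle $\triangle(\alpha_1, \Phi(h_1)\beta_1, \Phi(h_2)\beta_1)$ emanating from $\alpha_1$ at distances $s$ and $t$, one has
\[
d_S(p, q) \geq s + t - 2(\Phi(h_1)\beta_1 \mid \Phi(h_2)\beta_1)_{\alpha_1} - K\delta
\]
whenever the right side is nonnegative, with $K$ a universal constant absorbable into $13\delta$; otherwise the inequality is automatic from $d_S(p, q) \geq 0$. Combining this with the Gromov-product bound yields $d_S(\phi'(x), \phi'(y)) \geq d_{T_D}(x, y) - (13\delta + 2C_0)$, as required.

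The main obstacle is the Gromov-product bound, which is precisely where the ``sufficiently far apart'' hypothesis on $A$ and $B$ enters via Lemma 3.2; the rest is a case analysis from standard hyperbolic geometry. Once this is in place, invoking the result of Coornaert-Delzant-Papadopoulos will upgrade these $D$-local quasigeodesics to global quasigeodesics for $D$ large enough, delivering the quasiisometric embedding $\phi'$ and thus the injectivity of $\Phi$.
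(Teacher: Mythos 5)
Your proof is correct and takes essentially the same route as the paper: reduce to two adjacent edges meeting at a vertex, use Lemma 3.2 (or its $H_A$-analogue) to show the two far endpoints are at distance at least $2D-2C_0$, and conclude by hyperbolicity of $\C$. The only difference is packaging --- you bound the Gromov product at the shared apex and invoke the tripod comparison, whereas the paper runs an explicit two-case thin-triangle analysis to extract the precise constant $13\delta+2C_0$; your universal constant $K\delta$ is not pinned down to $13\delta$, but this is immaterial since any fixed additive constant suffices for the Coornaert--Delzant--Papadopoulos local-to-global step.
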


\begin{proof}
Since we are only concerned about local behaviour, given any geodesic in $T_D$, it suffices to show that any adjacent edges map to a $(1, 13\delta + 2C_0)$-quasigeodesic segment. By translating and interchanging the roles of $H_A$ and $H_B$ it suffices to show that for any $x_0 \in [a,b]$, $y_0 \in [b, ha]$ with $h \in H_B - \{1\}$ the following inequality holds

\begin{equation*}
    d_{T_D}(x_0,y_0) - 13\delta - 2C_0 \leq d_S(x, y) \leq d_{T_D}(x_0,y_0),
\end{equation*}
where $x = \phi'(x_0), y= \phi'(y_0)$. The upper bound is automatic by the triangle inequality and the definition of $\phi'$, so we prove the lower bound. The points $x,y$ lie on the $\delta$-thin triangle $(\alpha_1, \beta_1, \Phi(h)(\alpha_1))$. Specifically $x \in [\alpha_1, \beta_1], y \in [\beta_1, \Phi(h)(\alpha_1)]$. 

\textbf{Case 1}: $d_S(x, \beta_1) \leq C_0$ or $d_S(y, \beta_1) \leq C_0$. Suppose $d_S(x, \beta_1) \leq C_0$. Then we have

\begin{equation*}
    \begin{split}
        d_{T_D} (x_0, y_0)  & = d_{T_D} (x_0,b) + d_{T_D} (b,y_0) \\
        & = d_S (x, \beta_1) + d_S (\beta_1,y) \\
        & \leq d_S (x, \beta_1) + d_S(\beta_1, x) + d_S(x,y)\\
        & \leq d_S (x, y)  + 2C_0.\\
    \end{split}
\end{equation*}

A similar computation proves the lower bound if $d_S(y, \beta_1) \leq C_0$.


\begin{figure}[h]
    \centering
    \includegraphics[trim = 11cm 4cm 6cm 7cm, clip = true, scale = 1]{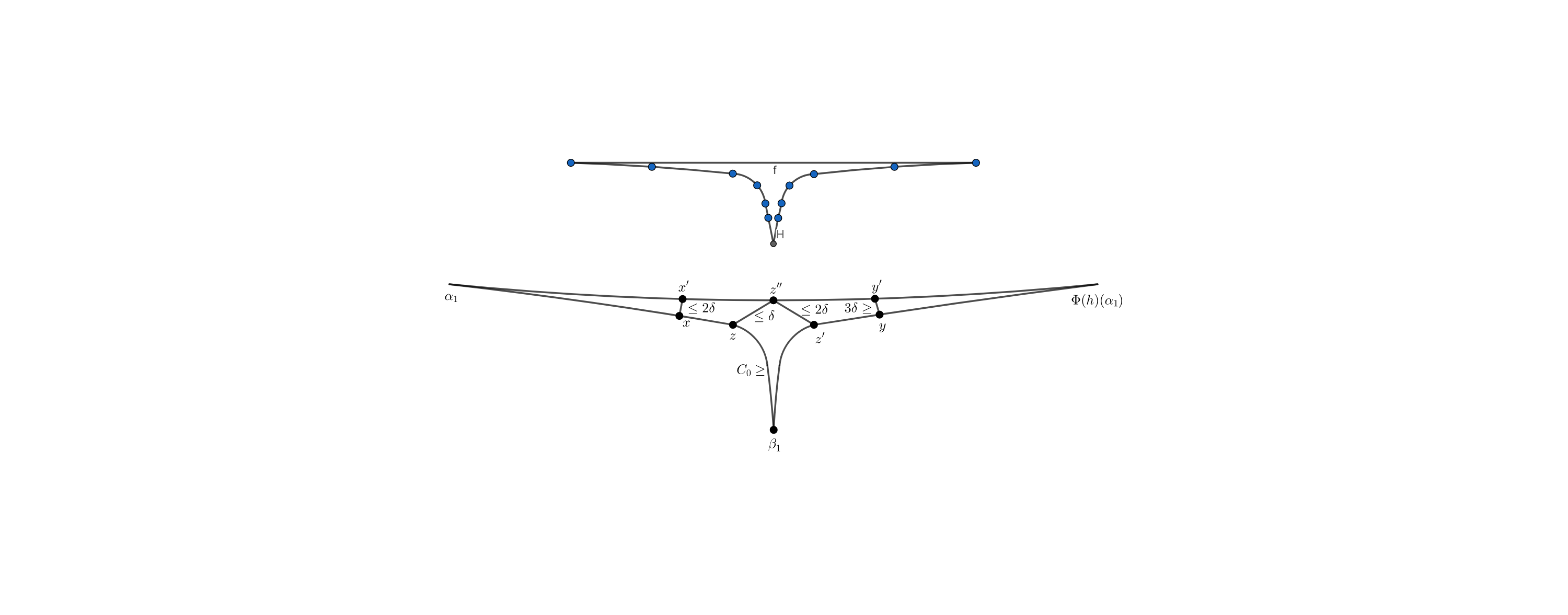}
    \caption{Lemma 3.3 Case 2}
    \label{fig:3.3 case 2}
\end{figure}

\textbf{Case 2}: $d_S(x, \beta_1), d_S(y, \beta_1) > C_0$. As in the proof of Lemma 3.2 we can find points $z \in [\alpha_1, \beta_1], z' \in [\beta_1, \Phi(h)(\alpha_1)], z'' \in [\alpha_1, \Phi(h)(\alpha_1)]$ such that $d_S(z, z'), d_S(z, z'') \leq \delta$ and $d_S(z, \beta_1), d_S(z', \beta_1) \leq C_0$. Then $x$ is on the triangle $(\alpha_1, z, z'')$ and $y$ is on the triangle $(z', z'', \Phi(h)(\alpha_1))$. Since $(\alpha_1, z, z'')$ is $\delta$-thin and $d_S(z, z'') \leq \delta$ there is a point $x' \in [\alpha_1, z'']$ such that $d_S(x, x') \leq 2\delta$. Similarly since $(z', z'', \Phi(h)(\alpha_1))$ is $\delta$-thin and $d_S(z', z'') \leq 2\delta$ there is a point $y' \in [z'', \Phi(h)(\alpha_1)]$ such that $d_S(y, y') \leq 3\delta$. Then we have

\begin{equation*}
    \begin{split}
        d_{T_D} (x_0 ,y_0)  & = d_{T_D} (x_0,\beta_1) + d_{T_D} (\beta_1,y_0) \\
        & = d_S (x,z) + d_S (z, \beta_1) + d_S (\beta_1,z') + d_S (z',y)\\
        & \leq d_S (x,z) + d_S (z',y) + 2C_0\\
        & \leq d_S (x',z'') + d_S (z'',y') + 2C_0 + 8\delta\\
        & = d_S (x',y') + 2C_0 + 8\delta\\
        & \leq d_S (x,y) + 2C_0 + 13\delta.\\
    \end{split}
\end{equation*}
\end{proof}

\begin{proof}[Proof of Theorem \ref{thm: main}]
It now follows from Th\'{e}or\`{e}me 1.4 of Chapter 3 of \cite{CDP} that there exists some $L_1, K_1, C_1$ such that for all $L>L_1$, any $L$-local $(1, 13\delta + 2C_0)$-quasigeodesic is a global $(K_1, C_1)$-quasigeodesic. Here $L_1, K_1,$ and $C_1$ depend only on $\delta, 1,$ and $13\delta + 2C_0$. Hence taking $D_0>L_1$, all geodesics in $T_D$ map to $(K_1, C_1)$-quasigeodesics in $\C$. So $\phi := \phi' \circ \phi_D:T \to \mathcal{C}(S)$ is indeed an equivariant (by construction) $(K_2, C_2)$-quasiisometric embedding. Notice that because there is a uniform bound on $\delta$, a uniform bound on $M$ (and hence on $C_0$), and $L_1$ is uniform in $\delta, 1,$ and $13\delta + 2C_0$, there is a uniform bound on $D_0$ independent of $S.$

This also proves that $H_A * H_B$ injects into $\Mod$ via $\Phi$. To see this, we must show ker$\Phi$ is trivial. Let $f \in H_A * H_B - \{1\}$. If $f$ is conjugate into a factor of $H_A * H_B$ , then $f$ maps to a multitwist. Specifically, if $f = ghg^{-1}$ with $g \in H_A * H_B$ and $h \in H_A \cup H_B$,  then $\Phi(f)$ is the multitwist about the $\Phi(g)$ image of the underlying curves of $\Phi(h) \in H_A \cup H_B \subset \Mod$, and so is nontrivial.

Suppose that $f$ is not conjugate into any factor. By conjugating if necessary, we can write $f$ as a word $g$ that starts with a syllable in $H_A$ and ends with a syllable in $H_B$. Let $v$ be the midpoint of the edge $[a, b]$ in $T$. Then the infinite geodesic path $\gamma$ in $T$ connecting $..., g^{-1}(v), v, g(v), g^2(v), ...$ maps to a quasigeodesic in $\C$ via $\phi$.

Notice that $g$ acts as translations on $\gamma$. Hence by equivariance, $\phi(\gamma)$ is a quasiaxis for the action of $\Phi(g)$ on $\C$. Then $\Phi(g)$ is pseudo-Anosov and since being pseudo-Anosov is a conjugacy invariant, so is $\Phi(f)$. Therefore ker$\Phi = \{1\}$ and $G \cong H_A * H_B$. We also have that $G$ is PGF: (1) is clear and (2) is satisfied since we've equivariantly quasiisometrically embedded $T$ into $\C$ and $T$ is equivariantly quasiisometric to $\Ghat(G, \{ H_A, H_B\})$.
\end{proof}

This method of constructing global quasigeodesics from local data is not too dissimilar from the techniques used in the proof of \cite[Theorem 1.3]{Gu}.

\section{General Marking Graphs}
 
To prove that our groups are quasiisometrically embedded in $\Mod$, we will introduce a model space $\M$ for $\Mod$ which we call a \textit{general marking graph}. Our definition is a modification of Masur and Minsky's marking graph, which we denote by $\MM,$ whose vertices are complete clean markings and with edges between markings that differ by an ``elementary move.'' See \cite[Section 2.5]{MM2} for details. We note that for our purposes, we think of complete clean markings as maximal bases (pants decompositions) together with clean transverse curves. In \cite{MM2}, Masur and Minsky define complete clean markings as maximal bases together with \textit{projections} of clean transverse curves to their respective base curves. But if you have a vertex in an annular complex which is a projection of a curve in $S,$ then there is only one curve which it is the projection of. So in either case, a complete clean marking is really the same data.

First we define $R$-markings, which will be the vertices for $\M$. An $R$-\textit{marking} is the homotopy class of a filling collection of simple closed curves $\mu$ on $S$ such that the curves in $\mu$ pairwise intersect at most $R$ times. For fixed $R$, notice there finitely many $R$-markings up to homeomorphism. That is, there are finitely many Mod$(S)$-orbits of $R$-markings. 

There exists $E \in \mathbb{N}$ such that for all $\mu, \mu' \in \mathcal{M}_0$ there exists mapping classes $g, g' \in$ Mod$(S)$ such that the curves in $g\mu, g'\mu'$ pairwise intersect at most $E$ times, i.e. $g\mu \cup g'\mu'$ is an $E$-marking.

\begin{figure}[h]
    \centering
    \includegraphics[trim = 9cm 4.5cm 6cm 3.5cm, clip = true, scale = .75]{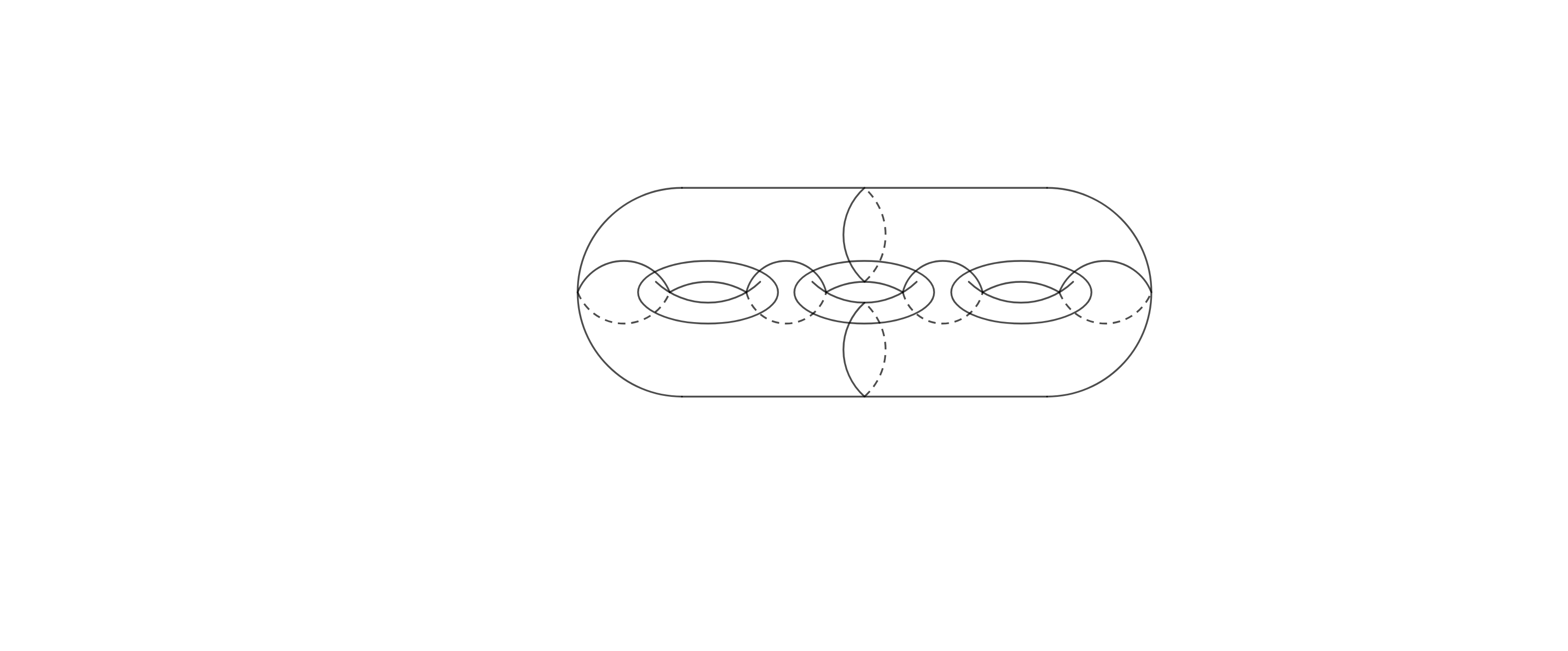}
    \caption{$\mu_0$}
    \label{fig:mu0}
\end{figure}

Consider the 1-marking $\mu_0$ above and let $h_1, ..., h_k \in$ Mod$(S)$ denote the Humphries generators. We also choose $E$ large enough so that $\mu_0 \cup h_i \mu_0$ is an $E$-marking for all $i = 1, ..., k$. We define the edges of $\M$ to be between $\mu, \mu' \in \mathcal{M}_0$ if $\mu \cup \mu'$ is an $E$-marking.

\begin{lemma}
$\M$ is connected.
\end{lemma}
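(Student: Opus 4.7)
The plan is to show every vertex of $\mathcal{M}$ lies in the connected component of $\mu_0$. The whole argument hinges on one observation: the ``$E$-marking'' condition---that pairwise geometric intersection numbers of the curves in the union are bounded by $E$---is invariant under the diagonal action of $\Mod$, since intersection numbers are mapping class invariants. This $\Mod$-equivariance of the edge relation is what lets me shuttle edges around freely.

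\emph{Step 1: The $\Mod$-orbit of $\mu_0$ lies in one component.} I would fix $g \in \Mod$ and write $g = h_{i_n}^{\epsilon_n} \cdots h_{i_1}^{\epsilon_1}$ as a word in the Humphries generators. By the construction of $E$, $\mu_0 \cup h_i \mu_0$ is an $E$-marking for each $i$, and applying $h_i^{-1}$ shows $\mu_0 \cup h_i^{-1}\mu_0$ is too, so $\mu_0$ is adjacent to every $h_i^{\pm 1}\mu_0$. Translating each such edge by the appropriate partial product $h_{i_{j-1}}^{\epsilon_{j-1}} \cdots h_{i_1}^{\epsilon_1}$ (using $\Mod$-equivariance) yields an edge between the successive partial products applied to $\mu_0$, and concatenating produces a path from $\mu_0$ to $g\mu_0$.

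\emph{Step 2: Every vertex joins this component.} Given $\mu \in \mathcal{M}_0$, the defining property of $E$ supplies $g, g' \in \Mod$ with $g\mu_0 \cup g'\mu$ an $E$-marking. Applying $g'^{-1}$ converts this into $(g'^{-1}g)\mu_0 \cup \mu$ being an $E$-marking, which is a genuine edge in $\mathcal{M}$ between the translate $(g'^{-1}g)\mu_0$ of $\mu_0$ and $\mu$ itself. Step 1 then connects $(g'^{-1}g)\mu_0$ to $\mu_0$, so $\mu$ is too.

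I do not expect a substantial obstacle here; the proof is essentially a transport argument resting on two ingredients already in place before the statement: the Humphries generators generate $\Mod$, and the edge relation is $\Mod$-equivariant. The single point that deserves care is that the existence of $g, g'$ producing an $E$-marking from a pair of translates is a priori weaker than producing an actual edge between $\mu_0$ and $\mu$, and it is precisely $\Mod$-equivariance in Step 2 that upgrades the former into the latter.
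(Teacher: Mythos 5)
Your proof is correct and follows essentially the same route as the paper's: connect an arbitrary vertex to a translate of $\mu_0$ via the defining property of $E$, then connect $\mu_0$ to its $\Mod$-orbit by an edge path through partial products of Humphries generators. Your version merely makes explicit two points the paper leaves implicit --- the $\Mod$-equivariance of the edge relation needed to upgrade ``some translates form an $E$-marking'' to an actual edge at $\mu$, and the handling of inverse generators --- which is a welcome but not essential refinement.
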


\begin{proof}

Let $\mu, \mu' \in \mathcal{M}_0.$ By the above, there are mapping classes $g, g' \in \Mod$ such that we have edges $[\mu, g\mu_0], [\mu', g'\mu_0]$ in $\M$. To show connectivity it suffices to show $\mu_0$ is connected to every marking in its orbit. Let $h = h_{i_1} h_{i_2} ... h_{i_l} \in$ Mod$(S)$. Consider the edge path in $\M$ given by the vertices $\mu_0, h_{i_1}\mu_0, h_{i_1} h_{i_2} \mu_0, ...,$  $h_{i_1} h_{i_2} ... h_{i_{l-1}}\mu_0, h\mu_0$. Hence $\mu_0$ is connected to every marking in its orbit and $\M$ is connected. 
\end{proof}

$\M$ is locally finite since, given a vertex, only finitely many $R$-markings can intersect it at most $E$ times. It then follows from the above that the action of $\Mod$ on $\M$ is cocompact.

Notice that vertex stabilizers are finite. If a mapping class fixes a vertex $\mu$, it is permuting the component curves of $\mu$ and disks of $S-\mu$. Notice that these permutations uniquely determine the mapping class by the Alexander trick. Since there are finitely many permutations, the stabilizer is finite.

It follows that the action of $\Mod$ on $\M$ is properly discontinuous. To see this, let $K$ be a compact set. It is contained in a closed ball of radius say $r$ centered at some $\mu$. Suppose there exists infinitely many distinct mapping classes $g_1, g_2, ...$ such that $g_i K \cap K \neq \emptyset$. Hence for all $i$ we have $d_\M (\mu, g_i \mu) \leq 2r$. Since the ball of radius $2r$ around $\mu$ contains finitely many vertices, this means that there is a infinite sequence $i_1, i_2, ...$ such that $g_{i_1} \mu = g_{i_2} \mu = ...$. But then applying $g_{i_1}^{-1}$ gives $\mu = g_{i_1}^{-1} g_{i_2} \mu = ...$ which contradicts finiteness of vertex stabilizers. 

It then follows from Milnor-{\v S}varc that $\mathcal{M} \stackrel{\text{q.i.}}{\cong}$ Mod$(S) \stackrel{\text{q.i.}}{\cong} \mathcal{M}_{MM}.$ Next, we want to state and prove a Lipschitz result for projections of markings to any subsurface $Y$. It is a well-known fact that for $\alpha, \beta, \in \Co$ with $\xi(S) \geq 1$ we have the following bound on $d_S(\alpha, \beta)$ in terms of intersection number

\begin{equation*}
    d_S(\alpha, \beta) \leq 2\mbox{log}_2(i(\alpha, \beta)) + 2.
\end{equation*}

Define

\begin{equation*}
    k(A) = max \{2\mbox{log}_2(4(A+1)) + 2, A + 1 \}.
\end{equation*}

\begin{lemma}
Let $\mu \in \mathcal{M}_0$ and $Y \subset S$ be a domain with $\xi(Y) \neq 0$.Then \emph{diam}$_Y(\mu) \leq k(R)$. Moreover, for any edge $e$ in $\M$ we have \emph{diam}$_Y(e) \leq k(E)$.
\end{lemma}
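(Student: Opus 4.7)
The plan is to reduce the diameter bound to a pairwise comparison and then apply a logarithmic bound in the non-annular case together with the algebraic intersection identity in the annular case. By the triangle inequality together with Theorem 2.3, it suffices to bound $d_Y(\alpha,\beta)$ for any two component curves $\alpha,\beta$ of the $R$-marking $\mu$, since these curves satisfy $i(\alpha,\beta)\le R$ by definition.

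Suppose first that $\xi(Y)\ge 1$. If either $\alpha$ or $\beta$ fails to meet $Y$ essentially, the corresponding projection is empty and there is nothing to check. Otherwise, pick arc or curve components $a\in \pi_Y'(\alpha)$ and $b\in \pi_Y'(\beta)$ in $\ArcoY$. Since $a,b$ are restrictions of $\alpha,\beta$ to $Y$, their intersection inside $Y$ is at most $i(\alpha,\beta)\le R$. A standard Hempel-type logarithmic bound for pairs in the arc-and-curve graph, obtained by the usual capping-off procedure that converts arcs to curves in $Y$ (inflating intersection number by at most a factor of $4$ and $+1$) followed by the bound $d(\gamma,\gamma')\le 2\log_2 i(\gamma,\gamma')+2$ for curves, yields $d'_Y(a,b)\le 2\log_2(4(R+1))+2$. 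Composing with the coarse Lipschitz retraction $\psi_Y\colon\ArcoY\to \mathcal P(\CoY)$ from \cite{MM2} preserves this bound up to a uniformly small additive error that is absorbed in the constants in the definition of $k$.

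If instead $Y$ is an annular domain, equation $(1)$ of Section 2.6 gives $d_Y(\pi_Y(\alpha),\pi_Y(\beta))=|\pi_Y(\alpha)\cdot \pi_Y(\beta)|+1$. The algebraic intersection of the chosen lifts is bounded above by the geometric intersection of $\alpha$ and $\beta$ in the annular cover, hence by $i(\alpha,\beta)\le R$, giving $d_Y(\alpha,\beta)\le R+1$. Combining both cases and maximizing over pairs in $\mu$ yields $\mbox{diam}_Y(\mu)\le k(R)$.

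For the moreover statement, an edge $e=[\mu,\mu']$ in $\M$ is by definition a pair whose union $\mu\cup\mu'$ is itself an $E$-marking, so applying the first part to this $E$-marking directly gives $\mbox{diam}_Y(e)\le k(E)$. The main obstacle is bookkeeping: tracking the constants through the capping-off construction and through $\psi_Y$ carefully enough that the resulting estimate fits inside the closed form $k(A)=\max\{2\log_2(4(A+1))+2,\ A+1\}$, and verifying that the factor $4(A+1)$ is large enough to absorb the constants from Theorem 2.3 and the coarse retraction when one passes from the pairwise bound to the diameter of the full projection.
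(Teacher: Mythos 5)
Your proposal is correct and follows essentially the same route as the paper: bound the pairwise intersection numbers of the projections (at most $R$ for annuli, inflated to at most $4(R+1)$ after converting arcs to curves for $\xi(Y)\ge 1$), apply equation (1) respectively the logarithmic distance bound, and deduce the moreover statement from the fact that edges are $E$-markings. Your write-up is somewhat more explicit about the reduction to pairs and the role of the retraction $\psi_Y$, but the substance is identical.
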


\begin{proof}
Given any curves $\alpha, \beta \in \mu$ we have $i(\alpha, \beta) \leq R$. If $Y$ is an annulus, these lift to arcs with at most $R$ intersections, hence $\mu$ must project to a set of diameter at most $R+1$.

If $\xi(Y) \geq 1$ each intersection point becomes at most 4 intersection points when you project to $Y$. For certain $Y$, you can get 4 new intersections. See \cite[Lemma 2.2]{MM2}. Hence we have at most $4R + 4 = 4(R + 1)$ intersections in $Y$ and diam$_Y(\mu) \leq k(R)$ 
 
The moreover statement follows since edges of $\M$ are $E$-markings.
\end{proof}

Notice that by our choice of edges, $R$-markings are always adjacent to complete clean markings in $\mathcal{M}.$ That is, for any $\mu, \nu \in \mathcal{M}$ there are complete clean markings $\mu', \nu'$ such that
\begin{equation}
   d_\mathcal{M}(\mu, \mu'), d_\mathcal{M}(\nu, \nu') \leq 1. 
\end{equation}

Coupling this with our Lipschitz result, notice that for any $\mu, \nu$ and adjacent clean $\mu', \nu'$ we have
\begin{equation}
  |d_Y(\mu, \nu) - d_Y(\mu', \nu')| \leq 2k(E).
\end{equation}

We are now ready to prove a Masur-Minsky style distance formula for our general marking graph. See \cite[Theorem 6.12]{MM2}. Let $d_{Y_{MM}}$ denote distance in $Y$ of markings but via Masur and Minsky's projections $\pi_\YM$ of markings (see \cite[Section 2.5]{MM2}), which we note are different from our projections of $R$-markings, which are simply subsurface projections $\pi_Y$. Let

\begin{equation*}
   \Omega(\mu, \nu, A) = \{ Y \subset S \mbox{ } | \mbox{ } d_Y (\mu, \nu) \geq A\},
\end{equation*}
and

\begin{equation*}
    \Omega_{MM}(\mu, \nu, A) = \{ Y \subset S \mbox{ } | \mbox{ } d_\YM (\mu, \nu) \geq A\}.
\end{equation*}

\begin{thm}
Given $A_2 > 0$ sufficiently large, there exists constants $K_3(A_2) \geq 1 , C_3(A_2) \geq 0$ such that for any $\mu, \nu \in \M_0$ we have

\begin{equation*}
    d_\M(\mu, \nu) \asymp_{K_3,C_3} \sum_{Y \in S} [d_Y(\mu, \nu)]_{A_2}.
\end{equation*}
\end{thm}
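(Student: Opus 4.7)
My plan is to reduce to Masur--Minsky's distance formula for $\MM$ and transfer via the quasiisometry between $\M$ and $\MM$ established above by Milnor--\v{S}varc. Given $\mu, \nu \in \mathcal{M}_0$, equation (4) furnishes complete clean markings $\mu', \nu'$ with $d_\M(\mu, \mu'), d_\M(\nu, \nu') \leq 1$. Since complete clean markings are vertices of both $\M$ and $\MM$, this quasiisometry gives $d_\M(\mu,\nu) \asymp d_\MM(\mu',\nu')$ uniformly. I would then invoke \cite[Theorem 6.12]{MM2} to obtain a threshold $A_0$ such that for any $A \geq A_0$,
\[
d_\MM(\mu',\nu') \asymp \sum_{Y \subset S} [d_\YM(\mu',\nu')]_A,
\]
where $d_\YM$ is computed using the Masur--Minsky projections of the clean markings.

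The technical heart is to replace $d_\YM(\mu',\nu')$ by $d_Y(\mu,\nu)$ up to a uniformly bounded additive error. This splits into two comparisons. First, for a fixed clean marking $\mu'$, I would show $\pi_Y(\mu')$ and $\pi_\YM(\mu')$ are uniformly close in $\CY$. For non-annular $Y$ both are determined by curves of $\mu'$ meeting $Y$, and Lemma 4.2 bounds the combined diameter. For annular $Y$ with core $\gamma$: if $\gamma$ is a base curve of $\mu'$, then $\pi_\YM(\mu')$ is the projection of the clean transverse, while $\pi_Y(\mu')$ is the diameter over all $\mu'$-curves meeting $\gamma$ (which includes the transverse); if $\gamma$ is not a base curve, both projections use the subset of $\mu'$-curves hitting $\gamma$. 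In either case the bounded intersection complexity of clean markings yields $|d_\YM(\mu',\nu') - d_Y(\mu',\nu')| \leq K'$ for some uniform $K'$. Second, the Lipschitz bound (5) gives $|d_Y(\mu,\nu) - d_Y(\mu',\nu')| \leq 2k(E)$. Chaining these produces $d_\YM(\mu',\nu') \asymp_{1,C'} d_Y(\mu,\nu)$ for a uniform constant $C'$.

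Finally, I would convert the truncated sums using Lemma 2.1 in both directions (the reverse direction via the transfer $A \asymp_{K,C} B \Rightarrow B \asymp_{K,KC} A$ from Section 2.1), choosing $A_2$ large enough relative to $C'$ and $A_0$ that both truncation thresholds can be exchanged at bounded cost in the comparability constants. Composing the $\asymp$'s from Milnor--\v{S}varc, from the Masur--Minsky formula, and from the sum comparison yields constants $K_3(A_2), C_3(A_2)$ realizing the claimed formula. I expect the main obstacle to be the annular case in the middle step: the Masur--Minsky convention uses the clean transverse for an annulus whose core is a base curve, while our $\pi_Y$ takes the full diameter over all $\mu'$-curves meeting $\gamma$, and confirming that these two sets are uniformly close for every clean marking $\mu'$ requires the case analysis above but uses nothing beyond the standard definitions and Lemma 4.2.
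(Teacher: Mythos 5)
Your proposal is correct and follows essentially the same route as the paper: reduce to the Masur--Minsky formula via Milnor--\v{S}varc and equation (4), bound the discrepancy between $\pi_\YM$ and $\pi_Y$ (with the annular base-curve/transversal case analysis being the key point, exactly as in the paper), apply the Lipschitz bound (5), and then adjust truncation thresholds. The only cosmetic difference is that the paper manages the threshold changes by nesting the index sets $\Omega_{MM}$ and $\Omega$ directly rather than by invoking Lemma 2.1 in both directions, but this does not change the substance of the argument.
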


\begin{proof}

This distance formula is a consequence of the following equation which we explain below. Here $\mu', \nu'$ are complete clean markings adjacent to $\mu, \nu$, respectively. 

\begin{equation*}
    \begin{split}
        d_\M(\mu, \nu)  & \asymp_{1,2}  d_\M(\mu', \nu')\\
        & \asymp d_{\M_{MM}}(\mu', \nu')\\
        & \asymp_{K_0(A_0),C_0(A_0)} \sum_{Y \in \Omega_{MM}(\mu', \nu', A_0)} d_{Y_{MM}}(\mu', \nu')\\
        & \asymp_{ 2 ,0} \sum_{Y \in \Omega_{MM}(\mu', \nu', A_0)} d_Y(\mu', \nu')\\
        & \asymp_{K_1(A_1), C_1(A_1)} \sum_{Y \in \Omega(\mu', \nu', A_1)} d_Y(\mu', \nu')\\
        & \asymp_{2,0} \sum_{Y \in \Omega(\mu', \nu', A_1)} d_Y(\mu, \nu)\\
        & \asymp_{K_2(A_2), C_2(A_2)} \sum_{Y \in \Omega(\mu, \nu, A_2)} d_Y(\mu, \nu).\\
    \end{split}
\end{equation*}

 The first comparability is by (4). The second comparability is by Milnor-{\v S}varc. The third is Masur-Minsky distance formula. The last 4 require justification.

If $Y$ is an annulus with core a base curve $\alpha$ of $\mu'$ with transversal $\beta$ then $\pi_Y(\mu') = \pi_\YM (\mu') = \pi_Y(\beta)$ by ``cleanliness.'' If instead $Y$ is an annulus with the core a transversal $\beta$ transverse to $\alpha$, then $\pi_{\YM} (\mu')= \pi_Y ($base$(\mu')) = \pi_Y (\alpha)$. The second equality is because $\alpha$ is the only base curve $\beta$ intersects. But for us the projection $\pi_Y(\mu')$ will include any other curves $\beta$ intersects. By cleanliness this is at most 4 other transversals. But these transversals all lie in the link of $\alpha$ (by cleanliness) hence by Theorem 2.3, they project to a set of diameter at most 2 in $\mathcal{C}(Y)$ with $\alpha$ in the middle of this set. Hence in $\mathcal{C}(Y)$ we have $|d_Y (\mu', \nu') - d_\YM (\mu', \nu')| \leq 2 + 2 = 4$ for any other complete clean $\nu'.$

Suppose $Y$ is annulus with core not a base curve or transversal of $\mu'$ or $\xi(Y) > 0$. Again the difference between the projections in \cite{MM2} and ours is that $\pi_\YM(\mu') = \pi_Y ($base$(\mu'))$ and we are projecting all of $\mu'$. In $\C$, base$(\mu')$ is a simplex and $\mu'$ is a simplex with adjacent edges and has diameter 3. By Theorem 2.3, $\pi_\YM(\mu')$ is a set of diameter at most 2 exactly in the middle of $\pi_Y (\mu')$ which has diameter at most 6. It follows that $|d_Y (\mu', \nu') - d_\YM (\mu', \nu')| \leq 2 + 2 = 4$ for any other complete clean marking $\nu'.$

It follows that for $Y$ with $\xi(Y) \neq 0$, if $d_\YM (\mu', \nu') \geq A_0$ (i.e. $Y \in \Omega_{MM}(\mu', \nu', A_0)$), then

\begin{equation*}
    \frac{A_0 -4}{A_0} \leq \frac{d_Y (\mu', \nu')}{d_\YM (\mu', \nu')} \leq \frac{A_0 +4}{A_0},
\end{equation*}
and it follows that

\begin{equation*}
    \frac{A_0 -4}{A_0} \leq \frac{ \sum_{Y \in \Omega_{MM}(\mu', \nu', A_0)} d_Y (\mu', \nu')}{\sum_{Y \in \Omega_{MM}(\mu', \nu', A_0)} d_\YM (\mu', \nu')} \leq \frac{A_0 + 4}{A_0}.
\end{equation*}

Choosing $A_0>8$ gives (2,0)-comparability. For the fifth comparability, letting $A_1>A_0 + 4$ gives

\begin{equation*}
    \Omega_{MM}(\mu', \nu', A_1) \subset \Omega (\mu', \nu', A_1) \subset \Omega_{MM}(\mu', \nu', A_0),
\end{equation*}
where the first inclusion is obvious. Since summing $d_Y(\mu', \nu')$ over the first and last set is comparable to $d_{\M_{MM}}(\mu', \nu')$, so is summing over the middle set, for some constants $K_1(A_1), C_1(A_1)$. For the sixth comparability, equation (5) gives

\begin{equation*}
    \frac{A_1 - 2k(E)}{A_1} \leq \frac{d_Y (\mu, \nu)}{d_Y (\mu', \nu')} \leq \frac{A_1 + 2k(E)}{A_1}.
\end{equation*}

Choosing $A_1 > 4k(E)$ gives $(2,0)$-comparability. For the seventh comparability notice that

\begin{equation*}
    \Omega(\mu', \nu', A_1) \subset \Omega (\mu, \nu, A_1 - 2k(E)) \subset \Omega(\mu', \nu', A_1 - 4k(E)).
\end{equation*}

For $A_1$ sufficiently large, the sums of $d_Y(\mu, \nu)$ over the first and last set are comparable, hence so is summing over the middle set. Letting $A_2 = A_1 - 2k(E)$ finishes the proof.
\end{proof}

\section{Quasiisometrically Embedding into \Mod}

Convex cocompact subgroups are quasiisometrically embedded in $\Mod$. This follows from the work of Hamenst{\"a}dt \cite{H} and Kent-Leininger \cite{KL}, together with the Masur-Minsky distance formula:

\begin{equation*}
   d_{\Mod} (g,h) \leq d_G (g,h) \asymp d_S (gv,hv) \preceq_{1,\kappa} \sum_{Y \subset S} [d_Y (gv,hv)]_\kappa \asymp d_{\Mod} (g,h).
\end{equation*}

The first inequality follows from the triangle inequality. The next comparability is by \cite{H, KL}, where $v$ is some fixed vertex in $\C$. The inequality after that is obvious for say $\kappa > 0.$ And the last comparability is the Masur-Minsky distance formula with $\kappa$ sufficiently large. Finitely generated Veech groups are also quasiisometrically embedded in $\Mod$ by work of Tang \cite{T}. It turns out that our groups are too. Notice that in the statement below, $A$ and $B$ needn't be ``sufficiently far apart'' in $\C$. That is, it holds for any PGF group $G = H_A*H_B,$ hence applies to more general examples than the groups described in Theorem 1.1.

\begin{uundistorted}
Let $G = H_A * H_B \subset$ \emph{Mod}$(S)$ be a nontrivial free product which is PGF with $H_A, H_B$ subgroups generated by multitwists about multicurves $A, B,$ respectively. Then $G$ is undistorted in \emph{Mod}$(S)$.
\end{uundistorted}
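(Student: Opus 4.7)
The strategy is to use Theorem 4.3 applied to the orbit of a basepoint in $\M$, bounding $d_G(1,g)$ from above by the subsurface projection sum. By Milnor-\v{S}varc, $\Mod \asymp \M$, so it suffices to show the orbit map $g \mapsto g\mu$ is a quasiisometric embedding for some $\mu \in \mathcal{M}_0$. The upper bound $d_\M(\mu, g\mu) \preceq d_G(1,g)$ is automatic since the orbit map is Lipschitz, so the content is the reverse inequality. By Theorem 4.3 this reduces to exhibiting enough subsurfaces $Y$ so that $\sum_Y [d_Y(\mu, g\mu)]_{A_2}$ dominates $d_G(1,g)$.

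Write $g = c_1 c_2 \cdots c_k$ in normal form, with $c_i$ alternating between $H_A -\{1\}$ and $H_B - \{1\}$. Since each factor is Abelian with word norm the $\ell^1$-norm in its multitwist generators, $d_G(1,g) \asymp k + \sum_i \|c_i\|$. The first piece $k$ (number of syllables) is captured by $Y = S$: PGF gives an equivariant quasiisometric embedding $\phi: \Ghat \to \C$, and since $\Ghat$ is quasiisometric to the Bass-Serre tree $T$ of $H_A * H_B$ as in Section 3.1, we get $d_\C(\phi(1),\phi(g)) \asymp d_T(1,g) \asymp k$; using Lemma 4.2 to compare $\phi$ and the $\M$-orbit map yields $d_S(\mu, g\mu) \asymp k$, so $[d_S(\mu, g\mu)]_{A_2}$ contributes $k$ (up to an additive constant absorbing small $k$).

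For the syllable lengths, set $g_i = c_1 \cdots c_i$ and fix $c_i \in H_A$, writing $c_i = \prod_j t_{\alpha_j}^{x_{i,j}}$ so that $\|c_i\| = \sum_j |x_{i,j}|$. Let $Y_{i,j}$ be the annular subsurface with core $g_{i-1}\alpha_j$. The $i$th step in the broken path $\mu \to c_1 \mu \to \cdots \to g\mu$ applies the conjugate multitwist $g_{i-1} c_i g_{i-1}^{-1}$, which twists $|x_{i,j}|$ times about $g_{i-1}\alpha_j$; equation $(3)$ together with Lemma 4.2 gives $d_{Y_{i,j}}(g_{i-1}\mu, g_i\mu) \asymp |x_{i,j}|$. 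The goal is to upgrade this to $d_{Y_{i,j}}(\mu, g\mu) \gtrsim |x_{i,j}|$. Granting this, summing over $i, j$ and applying Lemma 2.1 to pass between the untruncated bounds and the truncated sum at threshold $A_2$ produces $\sum_i \|c_i\| \preceq \sum_Y [d_Y(\mu, g\mu)]_{A_2}$, and combined with the $Y = S$ contribution this finishes the proof.

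The main obstacle is precisely the upgrade in the previous paragraph: I must show that for every $l \neq i$ the step $c_l$ contributes only bounded projection to $Y_{i,j}$, so the triangle inequality along the broken path does not cancel the $|x_{i,j}|$ term. Equivalently, writing $d_{Y_{i,j}}(g_{l-1}\mu, g_l\mu) = d_{Y'}(\mu, c_l\mu)$ with $Y'$ the annulus about $g_{l-1}^{-1}g_{i-1}\alpha_j$, I need $d_{Y'}(\mu, c_l \mu) = O(1)$. This is a Behrstock-style separation statement supplied by PGF: the sequence of peripheral-coset vertices traversed by the normal form of $g$ maps under $\phi$ to points along a quasigeodesic in $\C$, so the supports of the multitwists $c_l$ (subsets of $g_{l-1}A$ or $g_{l-1}B$) sit near distinct, well-separated vertices of that quasigeodesic; the bounded geodesic image theorem (Theorem 2.2) then forces their projections to $Y_{i,j}$ (and hence to $Y'$ after pulling back by $g_{l-1}$) to be uniformly bounded whenever $l \neq i$. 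Summing these bounded errors over $l$ is handled by absorbing a $k$-times-constant additive defect into the already established $k$-contribution, and the final application of Lemma 2.1 converts untruncated inequalities to the truncated form required by Theorem 4.3.
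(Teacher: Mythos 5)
Your overall architecture matches the paper's: decompose $d_G(1,g)$ into a tree part plus the syllable lengths, match the tree part against $d_S$ via the PGF embedding, match each syllable against the annular projections to the cores of the conjugated multicurves, and assemble everything with Theorem 4.3 and Lemma 2.1. The gap is in the quantitative form of your ``separation'' step. You prove (or propose to prove) that each \emph{individual} syllable $c_l$ with $l\neq i$ contributes $O(1)$ to the projection to $Y_{i,j}$, and then sum over $l$ to get a per-annulus defect of size $O(k)$, which you claim can be absorbed into the $Y=S$ contribution. It cannot. The defect of size $O(k)$ occurs once for \emph{each} annulus $Y_{i,j}$, and there are up to $k\cdot\xi(S)$ such annuli, so the total defect in $\sum_{i,j}[d_{Y_{i,j}}(\mu,g\mu)]_{A_2}$ versus $\sum_i\|c_i\|$ is quadratic in $k$ (and the truncation makes this worse: every syllable with $|x_{i,j}|\lesssim k$ can be wiped out entirely). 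A quadratic loss cannot be absorbed by the linear $Y=S$ term, so the orbit map could a priori still be distorted. What is actually needed, and what the paper proves as Lemma 5.9, is that the defect per annulus is \emph{uniformly} bounded: $d_{A(w)}(\mu(v_1),\mu(v(\pi_w(v_1))))\leq C_3$ with $C_3$ independent of the word. This is obtained not by applying the bounded geodesic image theorem syllable-by-syllable, but by splitting the path at $T$-distance $P_0$ from $w$: the entire far portion is controlled by a \emph{single} application of Theorem 2.2 to a genuine geodesic in $\C$ (after using stability of quasigeodesics and the monotonicity statement, Lemmas 5.5--5.6, to check nonempty projections), while the near portion consists of at most $P_0$ syllables, each contributing at most the uniform constant $C_1$ of Lemma 5.7.

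A secondary issue: your justification that the supports of the $c_l$ ``sit near distinct, well-separated vertices'' of the quasigeodesic is not available here, because Theorem 1.2 makes no largeness assumption on $d_S(A,B)$; in a general PGF free product one only has $d_S(A,B)\geq 1$, so consecutive conjugated multicurves can be adjacent in $\C$. The paper replaces separation by two facts: distinct multicurves in the $G$-orbit fill $S$ (Lemma 5.3, which uses the BCP property of relative hyperbolicity and is needed just to ensure the relevant projections are nonempty), and there is a single edge orbit in $T$, so adjacent multicurves have a uniform bound on their mutual subsurface projections by cocompactness (Lemma 5.7). You would need analogues of both of these to make even the $O(1)$-per-syllable claim rigorous, and then the uniform per-annulus bound described above to close the argument.
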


A key ingredient in the convex cocompact and Veech group cases is the Masur-Minsky distance formula. The approach to proving the formula is employing a model space for $\Mod$, what we earlier called $\MM$. We prove Theorem 1.2 by quasiisometrically embedding a model space for $G$ into the general marking graph $\M$ which is a model for $\Mod$.

We use the same notation from section 3. Let $V = \{ p^{-1}(x)\}$ denote the preimage of $x$ in $\widetilde{X}$. Notice that the collapsing of $\widetilde{X}$ to $T$ is injective  on $V$, so we also think of $V$ as a subset of $T$. Fixing some $v_0 \in V$, the orbit map $G \to \widetilde{X}$ given by $g \mapsto gv_0$ is a quasiisometry. Define a map $v:\{p^{-1}(t_A)\} \cup \{p^{-1}(t_B)\} \to V$ that sends points to the closest point of $V$.

The components of the preimages of $T_A$, $T_B$ are copies of $\R^n, \R^m$, respectively. We call them \textit{flats}. Recall that each flat corresponds to a coset $gH_A, gH_B.,$ and that $W$ is the set of all vertices in $T$ that are obtained by collapsing the flats to points $ga \in W_A$, $gb \in W_B$. Given $w \in W,$ let $F(w)$ denote the flat associated to $w$ and let $A(w)$ denote the multicurve associated to $w$.

Define $\pi_w: \widetilde{X} \to F(w)$ to be a closest point projection. Letting $d_{\widetilde{X}}$ denote the metric on $\widetilde{X}$, we write $d_w = d_{\widetilde{X}}|_{F(w)}$, and for $x_1, x_2 \in \widetilde{X}$ we write

\begin{equation*}
    d_w(x_1, x_2) = d_w(\pi_w(x_1), \pi_w(x_2)).
\end{equation*}

Let $\pi_T: \widetilde{X} \to T$ denote the collapsing map described above. Letting $d_T$ denote the metric on $T$, for $x_1, x_2 \in \widetilde{X}$ we write

\begin{equation*}
    d_T(x_1, x_2) = d_T(\pi_T(x_1), \pi_T(x_2)).
\end{equation*}

We have the following distance formula in $\widetilde{X}$.

\begin{lemma}
For all $x_1, x_2 \in \widetilde{X}$ we have

\begin{equation*}
   d_{\widetilde{X}} (x_1, x_2) = d_T(x_1, x_2) + \sum_{w \in W} d_w(x_1, x_2).
\end{equation*}

\end{lemma}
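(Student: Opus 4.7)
The plan is to exploit the tree-of-flats structure of $\widetilde{X}$. Recall that $\widetilde{X}$ consists of Euclidean flats $F(w)$ (copies of $\R^n$ or $\R^m$ according to whether $w \in W_A$ or $w \in W_B$), joined by strands (lifts of the edge $[0,1]$), organized combinatorially according to the Bass--Serre tree $T$. The desired identity is the tree-of-spaces analogue of the fact that a geodesic in $\widetilde{X}$ from $x_1$ to $x_2$ traverses, in order, the flats indexed by the tree geodesic $[\pi_T(x_1),\pi_T(x_2)]$, crossing each one along a Euclidean segment and moving between them along strands.

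The first step is to show that $d_w(x_1,x_2)=0$ whenever $w$ does not lie on the tree geodesic $[\pi_T(x_1),\pi_T(x_2)]$. The key observation is that any path from $x_i$ to $F(w)$ must enter $F(w)$ through the unique strand whose direction in $T$ points toward $\pi_T(x_i)$; if $w$ lies off the tree geodesic, then $x_1$ and $x_2$ are on the same side of $F(w)$ in $T$, so $\pi_w(x_1)=\pi_w(x_2)$. The second step is to identify, for each $w$ on the interior of the tree geodesic, $\pi_w(x_i)$ with the attachment point on $F(w)$ of the strand pointing toward $x_i$; when $x_i$ itself lies in the flat at an endpoint of the tree path, the projection is $x_i$ itself. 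With these identifications, $d_w(x_1,x_2)$ is precisely the Euclidean ``entry-to-exit'' distance across $F(w)$.

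Having set up the projections, the upper bound is realized by an explicit concatenation: the Euclidean geodesic across each interior flat between consecutive attachment points, the initial and terminal Euclidean segments from $x_i$ to its exit point (when $x_i$ lies in a flat), and the strands of the tree geodesic in between. The total length is $d_T(x_1,x_2)+\sum_{w\in W}d_w(x_1,x_2)$. For the lower bound, given any rectifiable path $\gamma$ from $x_1$ to $x_2$, its projection $\pi_T\circ\gamma$ contains the tree geodesic, so $\gamma$ must cross each strand of the geodesic (contributing at least $d_T(x_1,x_2)$ in total) and enter/exit each flat on the geodesic, contributing at least the Euclidean distance between the relevant entry and exit attachment points, which is exactly $d_w(x_1,x_2)$. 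Summing yields the reverse inequality.

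The main technical obstacle is the endpoint bookkeeping: $x_1$ or $x_2$ could lie on a strand rather than inside a flat, and the tree geodesic's first and last vertices may or may not be flats containing an $x_i$. I would handle this with a brief case analysis, observing that when $x_i$ lies on a strand the endpoint flat contribution vanishes and the tree distance $d_T$ accounts exactly for the portion of the strand traversed; with these conventions, the argument applies uniformly.
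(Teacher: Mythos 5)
Your proof is correct and follows essentially the same approach as the paper, which simply decomposes the $\widetilde{X}$-geodesic into Euclidean segments across the flats (each running from $\pi_w(x_1)$ to $\pi_w(x_2)$) and the strands (contributing $d_T$). Your version makes explicit the points the paper leaves implicit -- that $d_w(x_1,x_2)=0$ off the tree geodesic, that the projections coincide with strand attachment points, and the separate upper and lower bounds -- but the underlying argument is the same.
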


\begin{proof}

Let $\gamma$ denote the geodesic in $\widetilde{X}$ between $x_1$ and $x_2$. Then $\gamma$ decomposes into geodesic segments in the flats $F(w)$ and the edges connecting the flats, specifically the components of the preimage of the edge $[0,1]$ in $X$. The segment of the geodesic in $F(w)$ starts at $\pi_w(x_1)$ and ends at $\pi_w(x_2)$ hence has length exactly $d_w(x_1, x_2)$. The contribution from traveling across $p^{-1}([0,1])$ is exactly $d_T(x_1, x_2)$.
\end{proof}

Since $\Ghat$ is equivariantly quasiisometric to $T,$ parabolic geometric finiteness provides a $G$-equivariant $(K_0, C_0)$-quasiisometric embedding $\phi: T \to \C.$ Hence, any choice of a coarsely Lipschitz $G$-equivariant map from $T$ to $\C$ will be a quasiisometric embedding. So, as in the setup to the proof of Theorem 1.1, we define $\phi(ga) = g \alpha, \phi(gb) = g \beta,$ and $\phi(g[a,b]) = g[\alpha, \beta]$ with $\alpha, \beta$ some components of $A,B,$ respectively and $[\alpha, \beta]$ some choice of a geodesic segment in $\C.$ Notice for any $w \in W,$ we have that $\phi(w)$ is a vertex of the simplex $A(w)$ in $\C.$

We now fix an $R$-marking $\mu$ that ``lies in the middle'' of $[\alpha, \beta].$ Specifically, $\mu$ contains $\phi(\tilde{x})$ if $\phi(\tilde{x}) \in \Co.$ If $\phi(\tilde{x}) \notin \Co,$ we perturb $x \in [0,1]$ so that $\phi(\tilde{x}) \in \Co$ and again choose $\mu$ so that it contains $\phi(\tilde{x}).$ We define a equivariant map $\mu: V \to \M$ given by $\mu(gv) = g\mu$. This gives us a coarse map $\mu: \widetilde{X} \to \M$. To prove Theorem 1.2, it suffices to show that there exists $K \geq 1, C \geq 0$ such that for any given $v_1, v_2 \in V$ we have

\begin{equation*}
    \frac{1}{K}d_\M(\mu(v_1), \mu(v_2)) - C \leq d_{\widetilde{X}} (v_1, v_2) \leq Kd_\M(\mu(v_1), \mu(v_2)) + C.
\end{equation*}

We will need the following series of lemmas.

\begin{lemma}
Let $\kappa > 0$. Then for all $v_1, v_2 \in V$ we have

\begin{equation*}
    [d_T(v_1, v_2)]_\kappa + \sum_{w \in W} [d_{w}(v_1, v_2)]_\kappa  \asymp_{\kappa+1, \kappa^2 + 2\kappa} d_T(v_1, v_2) + \sum_{w \in W} d_{w}(v_1, v_2).
\end{equation*}
    
\end{lemma}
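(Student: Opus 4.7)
The plan is to prove the two directions of the comparability separately, where the harder direction reduces to a single geometric observation plus a case split. Writing $A = [d_T(v_1,v_2)]_\kappa + \sum_{w} [d_w(v_1,v_2)]_\kappa$ and $B = d_T(v_1,v_2) + \sum_{w} d_w(v_1,v_2)$, the inequality $A \leq B$ is immediate from $[x]_\kappa \leq x$ for all $x \geq 0$, and this is much stronger than the required upper bound $A \leq (\kappa+1) B + (\kappa^2+2\kappa)$. Thus the entire content is the opposite inequality $B \leq (\kappa+1) A + (\kappa^2+2\kappa)$.

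The key step, which I expect to be the main obstacle, is the geometric count that $\#\{w \in W : d_w(v_1,v_2) > 0\} \leq d_T(v_1, v_2)$. I would establish this by observing that if $w$ does not lie on the unique geodesic in $T$ from $v_1$ to $v_2$, then a single edge of $T$ separates $F(w)$ from the subtree containing $\{v_1, v_2\}$, so both closest-point projections $\pi_w(v_1), \pi_w(v_2)$ coincide with the attachment point of that edge to $F(w)$; hence $d_w(v_1,v_2) = 0$. Since each $v_i$ is the midpoint of a unit-length edge of $T$, the geodesic from $v_1$ to $v_2$ has integer length equal to the number of $W$-vertices it crosses, giving the count.

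Given the count, the rest is a routine calculation. I would decompose
\begin{align*}
B - A \;=\; (d_T - [d_T]_\kappa) + \sum_{w \,:\, 0 < d_w < \kappa} d_w \;\leq\; \kappa + \kappa \cdot d_T(v_1, v_2),
\end{align*}
using $d_T - [d_T]_\kappa \leq \kappa$ together with the previous bound on the number of small-but-nonzero terms. A brief case split on whether $d_T \geq \kappa$ (where $d_T = [d_T]_\kappa \leq A$, so $B - A \leq \kappa + \kappa A$) or $d_T < \kappa$ (where $\kappa \cdot d_T < \kappa^2$, so $B - A < \kappa^2 + \kappa$) then yields $B \leq (\kappa+1)A + (\kappa^2 + 2\kappa)$ as desired, completing the comparability.
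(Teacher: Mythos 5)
Your proof is correct and follows essentially the same route as the paper's: both rest on the count $\#\{w \in W : d_w(v_1,v_2)>0\} \le d_T(v_1,v_2)$ (which you, unlike the paper, actually justify via the coincidence of closest-point projections to off-geodesic flats), bound the sub-$\kappa$ terms by $\kappa$ times that count, and finish with the same case split on whether $d_T(v_1,v_2) \ge \kappa$. The only difference is that the paper also allows the basepoint $x$ to be perturbed away from $\tfrac{1}{2}$, so its count is only $\le d_T(v_1,v_2)+1$; this extra $1$ is harmlessly absorbed into the additive constant and does not affect your argument.
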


\begin{proof}
The comparability lower bound is obvious, so we show the upper bound. Define

\begin{equation*}
    \Omega_\geq (\kappa, v_1, v_2) = \{w \in W | d_w (v_1, v_2) \geq \kappa\},
\end{equation*}

\begin{equation*}
    \Omega_< (\kappa, v_1, v_2) = \{w \in W | 0 < d_w (v_1, v_2) < \kappa\}.
\end{equation*}

If $x = \frac{1}{2}$, notice that 

\begin{equation*}
    d_T (v_1, v_2) = |\Omega_\geq| + |\Omega_<|. 
\end{equation*}

If $x$ was perturbed so that $\phi(\tilde{x}) \in \Co$, we have 

\begin{equation*}
    d_T (v_1, v_2) + 1 \geq |\Omega_\geq| + |\Omega_<|. 
\end{equation*}

We have

\begin{equation*}
    \begin{split}
         d_T(v_1, v_2) + \sum_{w \in W} d_w (v_1, v_2) & = d_T(v_1, v_2) + \sum_{w \in \Omega_<} d_w (v_1, v_2) +\sum_{w \in \Omega_\geq} d_w (v_1, v_2) \\ 
         & \leq d_T(v_1, v_2) +  \kappa|\Omega_<| +\sum_{w \in \Omega_\geq} d_w (v_1, v_2) \\
         & \leq d_T(v_1, v_2) +  \kappa (d_T(v_1, v_2) + 1) + \sum_{w \in \Omega_\geq} d_w (v_1, v_2)  \\
         & = (\kappa + 1)d_T(v_1, v_2) + \sum_{w \in W} [d_w (v_1, v_2)]_\kappa + \kappa\\
         & \leq (\kappa + 1)[d_T(v_1, v_2)]_\kappa + \sum_{w \in W} [d_w (v_1, v_2)]_\kappa + \kappa + (\kappa + 1)\kappa\\
         & < (\kappa + 1)([d_T(v_1, v_2)]_\kappa + \sum_{w \in W} [d_w (v_1, v_2)]_\kappa) + \kappa + (\kappa + 1)\kappa. \\
    \end{split}
\end{equation*}

The second to last line follows because
\begin{equation*}
    d_T(v_1, v_2) < \kappa \Rightarrow (\kappa + 1)d_T(v_1, v_2) < (\kappa+1)\kappa.
\end{equation*}
\end{proof}

\begin{lemma}
Let $G$ be PGF with respect to $H_1,..., H_k$ which are subgroups generated by multitwists about multicurves $A_1, ..., A_k$, respectively. Let $\mathcal{A} = \bigcup_{g \in G} g(A_i)$, i.e. $\mathcal{A}$ is the union of all the $G$-orbits of the multicurves. Then for any distinct $A, A' \in \mathcal{A}$, $A \cup A'$ fills $S$.
\end{lemma}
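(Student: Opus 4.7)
The plan is to argue by contradiction, exploiting the equivariant quasiisometric embedding from parabolic geometric finiteness. Suppose $A \neq A'$ in $\mathcal{A}$ but $A \cup A'$ does not fill $S$; then there is an essential simple closed curve $\gamma$ disjoint from every component of $A \cup A'$. Writing $A = g A_i$ and $A' = g' A_j$, let $H = g H_i g^{-1}$ and $H' = g' H_j g'^{-1}$ be the peripheral conjugates stabilizing $A$ and $A'$; both are generated by multitwists about the components of $A$ and $A'$, respectively.

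First I would observe that since every generating twist of $H$ (and of $H'$) is about a curve disjoint from $\gamma$, the whole subgroup $\langle H, H' \rangle$ fixes $\gamma$ as an isotopy class. Consequently $\langle H, H' \rangle$ acts on $\C$ with bounded orbits: for any vertex $v \in \C$ and any $f \in \langle H, H' \rangle$,
\[
d_S(v, fv) \;\leq\; d_S(v,\gamma) + d_S(\gamma, fv) \;=\; 2\,d_S(v, \gamma).
\]
By equivariance of the quasiisometric embedding $\phi\colon \Ghat \to \C$ furnished by PGF, these bounded orbits pull back to bounded orbits of $\langle H, H' \rangle$ on $\Ghat$.

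The key step will be to invoke the standard fact from the theory of relatively hyperbolic groups that an infinite subgroup of $G$ acting on $\Ghat$ with bounded orbits must be contained in some peripheral conjugate $K$. Taking this as input, almost malnormality of peripherals (distinct conjugates have finite intersection) forces $H = K$: since $H$ is infinite and $H \subset K$ gives $H \cap K = H$ infinite, $H$ and $K$ cannot be distinct conjugates. Symmetrically $H' = K$, so $H = H'$. Because a multitwist subgroup determines its supporting multicurve (as the minimal multicurve carrying a finite-index subgroup), this yields $A = A'$, contradicting our choice.

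The hard part is the cited input from relative hyperbolicity, that an infinite subgroup of $G$ with bounded orbits on $\Ghat$ lies in a peripheral conjugate. This is classical (e.g.\ from Osin's subgroup classification or directly from the bounded coset penetration property), so I would simply cite it; alternatively, one can work with Bowditch's graph $T$ from definition (3) of relative hyperbolicity, noting that $\langle H, H' \rangle$ has bounded orbits on $T$ and so fixes a vertex of $T$, whose infinite stabilizer is by construction a peripheral conjugate.
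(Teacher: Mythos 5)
Your argument is correct in substance but takes a genuinely different route from the paper's. The paper works at the level of elements: it picks multitwists $t \in H$, $t' \in H'$, both fixing a curve $\gamma$ in the complement of $A \cup A'$, and splits into cases. If $[t,t']=1$, then the $1$-neighborhoods of the cosets $H$ and $t'H$ in $\G$ intersect in an infinite set, violating BCP; if $[t,t']\neq 1$, the supports intersect, so Thurston's construction makes $t^Nt'^N$ pseudo-Anosov on a subsurface, hence not conjugate into any peripheral, hence loxodromic on $\Ghat$, hence (via the PGF embedding) of positive translation length on $\C$ --- contradicting $t^Nt'^N(\gamma)=\gamma$. You instead work at the level of subgroups: $\langle H,H'\rangle$ fixes $\gamma$, so it has bounded orbits on $\C$, which pull back through the equivariant quasiisometric embedding to bounded orbits on $\Ghat$, and then the classification of subgroups of relatively hyperbolic groups plus almost malnormality forces $H=H'$. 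Your route avoids Thurston's construction and the commuting/non-commuting case split entirely, at the price of a heavier imported black box (infinite subgroups with bounded orbits on $\Ghat$ are parabolic). Two small cautions. First, your fallback via Bowditch's graph only yields a fixed vertex immediately when $T$ is a tree --- fine hyperbolic graphs are not locally finite, so a bounded orbit does not obviously have an invariant vertex --- which suffices for the free product case actually used in this paper but not verbatim for a general PGF group, so for the general statement you really do need the cited classification (or a ping-pong argument showing two distinct infinite maximal parabolics generate a group with a loxodromic). Second, the last step is cleanest if you invoke the full strength of almost malnormality: $gH_ig^{-1}=g'H_jg'^{-1}$ infinite forces $i=j$ and $g^{-1}g'\in H_i$, and since $H_i$ preserves $A_i$ componentwise this gives $gA_i=g'A_j$ directly; the phrase ``a multitwist subgroup determines its supporting multicurve'' could fail if the generating multitwists do not jointly have full support on $A_i$.
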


\begin{proof}
Suppose not. Let $H, H'$ denote the cosets corresponding to $A, A'$. Since $A \cup A'$ doesn't fill $S,$ there are some multitwists $t \in H, t' \in H'$ and a curve $\gamma \in \Co$ contained in $S- A \cup A'$ such that $t(\gamma) = t'(\gamma) = t^kt'^k (\gamma) = \gamma$ for all $k \in \Z$. If $[t, t'] = 1$ then $t$ and $t'$ span a $\Z^2$ plane in $\G,$ the Cayley graph of $G,$ and the $1$-neighborhoods in $\G$ of $H$ and $t'H$ intersect in an infinite set which contradicts the BCP property. Hence $[t,t'] \neq 1$ and the underlying curves or multicurves of $t,t'$, say $\alpha, \alpha'$ intersect. It is a well-known fact (for example see \cite{Th}) that there exists some $N$ such that $t^Nt'^N$ is pseudo-Anosov on the subsurface filled by $\alpha \cup \alpha'$. Hence $t^Nt'^N$ is not conjugate into $H_1, ..., H_k$ and has an axis in $\Ghat$. Since $G$ is PGF, there is a corresponding quasiaxis in $\C$ and the $t^Nt'^N$ acts with positive translation length on $\C$. This contradicts $t^Nt'^N(\gamma) = \gamma$.
\end{proof}

We recall the following well-known theorem and state it without proof. For a reference, see \cite[Theorem III.H.1.7]{BH}.

\begin{thm}[Stability of quasigeodesics]
For all $K \geq 1, C \geq 0, \delta > 0$ there exists $R(K, C, \delta)$ with the following property. If $X$ is a $\delta$-hyperbolic metric space, $\gamma$ is a $(K, C)$-quasigeodesic segment in $X$ and $\gamma'$ is a geodesic segment between the endpoints of $\gamma$, then the Hausdorff distance between $\gamma$ and $\gamma'$ is at most $R$.
\end{thm}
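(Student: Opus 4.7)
The plan is to prove Morse's lemma via the classical contradiction argument using $\delta$-thin triangles. Write $\gamma: [a,b] \to X$ for the $(K,C)$-quasigeodesic with endpoints $x = \gamma(a), y = \gamma(b)$, and $\gamma'$ for a geodesic from $x$ to $y$. First I would replace $\gamma$ by the piecewise-geodesic path $\bar{\gamma}$ that agrees with $\gamma$ at integer times and is geodesic on each $[n, n+1]$; a direct check shows $\bar{\gamma}$ is a $(K',C')$-quasigeodesic with constants depending only on $K, C$ and sits at Hausdorff distance at most $K+C$ from $\gamma$. Working with the continuous path $\bar{\gamma}$ removes pathological jumps and makes intermediate-value arguments available, reducing the problem to bounding the Hausdorff distance between $\bar{\gamma}$ and $\gamma'$.

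Second, I would bound the one-sided distance $D_0 = \sup_t d(\bar{\gamma}(t), \gamma')$, attained at some $p = \bar{\gamma}(t_0)$. Walking outward from $t_0$ along $\bar{\gamma}$, pick the first times $t^- < t_0 < t^+$ at which $d(\bar{\gamma}(t^\pm), \gamma') = D_0/2$, or else the endpoints $a, b$. Set $p^\pm = \bar{\gamma}(t^\pm)$ and let $q^\pm \in \gamma'$ be closest points to $p^\pm$, so $d(p^\pm, q^\pm) \leq D_0/2$. The geodesic quadrilateral with vertices $p^-, p^+, q^+, q^-$ is $2\delta$-thin, and the side $[q^-, q^+] \subset \gamma'$ sits at distance $D_0$ from $p$. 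So for $D_0 > 2\delta$ the point nearest $p$ on the geodesic $[p^-, p^+]$ must be $2\delta$-close to one of the short sides $[p^\pm, q^\pm]$, each of length at most $D_0/2$. Combined with the quasigeodesic inequality $d(p, p^\pm) \geq |t^\pm - t_0|/K' - C'$ and the fact that the subarc of $\bar{\gamma}$ between $p$ and $p^\pm$ avoids the $(D_0/2)$-neighborhood of $\gamma'$, this forces an explicit bound $D_0 \leq R_1(K, C, \delta)$.

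Third, the reverse inclusion is a standard connectedness argument. Given $z \in \gamma'$, continuity of $\bar{\gamma}$ together with its $R_1$-closeness to $\gamma'$ and the fact that $\bar{\gamma}$ joins the endpoints of $\gamma'$ force $\bar{\gamma}$ to pass within $R_1 + O(\delta)$ of $z$: project $\bar{\gamma}$ coarsely to $\gamma'$ via nearest-point projection and apply an intermediate value argument to the projection. Setting $R = R(K, C, \delta)$ to be the larger of the two bounds and adjusting by $K+C$ to pass back from $\bar{\gamma}$ to $\gamma$ finishes the proof.

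The main obstacle is the bootstrapping needed inside the quadrilateral argument in Step 2: since $p$ lies on $\bar{\gamma}$ rather than on the geodesic $[p^-, p^+]$, one must control the distance from $p$ to this geodesic in terms of $D_0$ itself. This is typically handled by a self-improving estimate, essentially an induction on the scale $D_0$, in which a weaker preliminary bound $D_0 \leq f(D_0)$ with $f$ sublinear is improved to a uniform constant. Calibrating the scale $|t_0 - t^\pm|$ so that the short sides $[p^\pm, q^\pm]$ (and not $[q^-, q^+] \subset \gamma'$) are the only candidates forced by thinness is the delicate step that pins down the explicit dependence $R = R(K, C, \delta)$.
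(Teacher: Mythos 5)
The paper does not prove this statement at all: it is quoted from \cite{BH} (Theorem III.H.1.7) and explicitly ``stated without proof,'' so there is no in-paper argument to compare yours against, and your proposal has to stand on its own. Your Steps 1 and 3 are fine (taming, and the connectedness/coarse-projection argument for the reverse inclusion once one one-sided bound is available). The gap is in Step 2, and it sits exactly where you flagged it: to extract anything from the $2\delta$-thin quadrilateral $p^-,q^-,q^+,p^+$ you must first bound $d(p,[p^-,p^+])$, and that quantity is itself an instance of the stability statement being proved (for the sub-quasigeodesic $\bar\gamma|_{[t^-,t^+]}$ and its chord). Worse, every estimate your setup actually produces is \emph{linear} in $D_0$: thinness gives that each point of $[p^-,p^+]$ lies within $2\delta+D_0/2$ of $\gamma'$, and the quasigeodesic inequalities relate $|t^+-t^-|$, the arc length, and $d(p^-,p^+)$ by affine bounds with slopes $\geq 1$. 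Chasing these around yields relations of the shape $D_0 \leq \lambda K' D_0 + \mathrm{const}$, which are vacuous; there is no contraction to iterate, and no a priori upper bound on $|t^+-t^-|$ (the quasigeodesic may spend an arbitrarily long parameter interval between distances $D_0/2$ and $D_0$ from $\gamma'$ as far as these inequalities are concerned). So the ``self-improving estimate with $f$ sublinear'' is asserted, but nothing in the proposal produces a sublinear $f$.

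The missing ingredient is the exponential divergence of geodesics: any rectifiable path joining two points of a geodesic and avoiding the ball $B(w,r)$ about an intermediate point $w$ of that geodesic has length at least exponential in $r/\delta$ (Proposition III.H.1.6 of \cite{BH}, proved by dyadic subdivision from thin triangles). The standard argument runs the extremal point on the \emph{geodesic} side: take $x_0\in\gamma'$ at maximal distance $D$ from the image of $\bar\gamma$, take $y,z\in\gamma'$ at distance $2D$ on either side, and join them by a path through $\bar\gamma$ avoiding $B(x_0,D)$; its length is at most linear in $D$ by the quasigeodesic inequality but at least exponential in $D/\delta$, which bounds $D$. (Equivalently, the same dyadic subdivision gives a preliminary bound on the Hausdorff distance that is logarithmic in the length of $\bar\gamma$, and \emph{that} is the sublinear input which your bootstrapping scheme would need.) Either way, quadrilateral thinness alone will not produce the theorem; Step 2 needs to be replaced by, or supplemented with, this exponential-versus-linear comparison.
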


The following lemma is likely a well-known result.

\begin{lemma}
Let $X$ be a $\delta$-hyperbolic space, $\gamma: [0,L] \to X$ a $(K,C)$-quasigeodesic, $R(K,C,\delta)$ the stability constant, and $\pi: \gamma([0,L]) \to [\gamma(0), \gamma(L)]$ a closest point projection. If $s,t \in [0,L]$ such that $s<t$ and $|t-s|>P := 2K(C+2R)+K^2$, then $\pi(\gamma(s)) < \pi(\gamma(t))$.
\end{lemma}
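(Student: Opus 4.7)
The plan is to argue by contradiction: assume $\pi(\gamma(s)) \geq \pi(\gamma(t))$ in the order along $\alpha := [\gamma(0),\gamma(L)]$, and write $p_s = \pi(\gamma(s))$, $p_t = \pi(\gamma(t))$. The strategy is to combine two applications of stability with the $\delta$-thin triangle condition.

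First, I would show that the projections are far apart on $\alpha$. Stability gives $d(\gamma(s),p_s),d(\gamma(t),p_t)\leq R$, so the triangle inequality through $p_s,p_t$ yields $d(\gamma(s),\gamma(t))\leq 2R+d(p_s,p_t)$. The quasigeodesic lower bound gives $d(\gamma(s),\gamma(t)) \geq (t-s)/K - C > P/K - C = C+4R+K$, so $d(p_s,p_t) > C+2R+K$.

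Next, I would apply stability to the sub-quasigeodesic $\gamma|_{[0,t]}$, which is a $(K,C)$-quasigeodesic from $\gamma(0)=\alpha(0)$ to $\gamma(t)$ and hence has Hausdorff distance at most $R$ from $[\alpha(0),\gamma(t)]$; this produces a point $q \in [\alpha(0),\gamma(t)]$ with $d(\gamma(s),q)\leq R$. The geodesic triangle with vertices $\alpha(0), p_t, \gamma(t)$ is $\delta$-thin, and its three sides are the subsegment $[\alpha(0),p_t]\subset\alpha$, the segment $[p_t,\gamma(t)]$ of length at most $R$, and $[\alpha(0),\gamma(t)]$; so $q$ lies within $\delta$ of $[\alpha(0),p_t]\cup[p_t,\gamma(t)]$.

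If $q$ is $\delta$-close to $[\alpha(0),p_t]$, take $q'\in[\alpha(0),p_t]\subset\alpha$ with $d(q,q')\leq\delta$; then $d(\gamma(s),q')\leq R+\delta$ and $d(p_s,q')\leq 2R+\delta$ by the triangle inequality, so $\alpha^{-1}(p_s)\leq\alpha^{-1}(q')+(2R+\delta)\leq\alpha^{-1}(p_t)+(2R+\delta)$. Combined with $p_s\geq p_t$, this forces $d(p_s,p_t)\leq 2R+\delta$, contradicting the first step (using $R\geq\delta$ from stability, so that the $K^2$ term in $P$ provides the extra margin). In the remaining case, $q$ is $\delta$-close to $[p_t,\gamma(t)]$, a path of length at most $R$, so $d(q,\gamma(t))\leq R+\delta$, and the triangle inequality gives $d(\gamma(s),\gamma(t))\leq 2R+\delta$, contradicting the quasigeodesic lower bound $d(\gamma(s),\gamma(t))>C+4R+K$ established above.

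The primary technical challenge is the bookkeeping of constants: the $\delta$-correction from the thin-triangle analysis must be dominated by the margin built into $P = 2K(C+2R)+K^2$. The $K^2$ term is present precisely to secure this margin once $R$ is taken large enough that $R\geq\delta$, which is always available from the stability theorem.
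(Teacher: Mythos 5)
Your overall strategy (two applications of stability plus a thin triangle at $p_t$) is workable in outline, but the constant bookkeeping in your Case A does not close for the stated $P$, and the remedy you point to does not address it. In Case A you derive $d(p_s,p_t)\leq 2R+\delta$, while your first step gives $d(p_s,p_t)>C+2R+K$ (since $P/K-C-2R=C+2R+K$). These are contradictory only if $\delta\leq C+K$. That inequality involves neither $R$ nor the $K^2$ term in any useful way: the $2R$ cancels from both sides, so taking $R\geq\delta$ buys you nothing here, and the $K^2$ term only contributes the additive $K$. For a $(1,1)$-quasigeodesic in a space with $\delta=17$ (the relevant value for the curve graph), the required inequality reads $17\leq 2$, so Case A produces no contradiction. (Case B is fine: there the comparison is $2R+\delta$ against $C+4R+K$, and assuming $R\geq\delta$ genuinely closes it.) The gap is repairable --- for instance by enlarging $P$ to $2K(C+2R+\delta)+K^2$, which would be harmless for the paper's applications since the relevant constants are only required to be sufficiently large --- but as a proof of the lemma with the stated constant the argument is incomplete, and the parenthetical claim that ``$R\geq\delta$ plus the $K^2$ term provides the margin'' is where it breaks.

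For comparison, the paper's proof avoids thin triangles entirely and so never has to weigh $\delta$ against $C+K$. It first records the two-sided comparison $\frac{1}{K}|t-s|-C-2R\leq d(\pi(\gamma(s)),\pi(\gamma(t)))\leq K|t-s|+C+2R$, notes in particular that consecutive projections satisfy $d(\pi(\gamma(t_0+1)),\pi(\gamma(t_0)))\leq K+C+2R$, and then argues by a discrete intermediate-value principle: if $\pi(\gamma(t))$ were behind $\pi(\gamma(s))$, the projection would still have to travel past $\pi(\gamma(s))$ on its way to $\gamma(L)$ in increments of size at most $K+C+2R$, producing some $t'\geq t$ with $d(\pi(\gamma(t')),\pi(\gamma(s)))<K+C+2R$; this contradicts the lower bound $\frac{1}{K}|t'-s|-C-2R\geq\frac{1}{K}|t-s|-C-2R>K+C+2R$. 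That route uses only the quantities already present in your first step, which is exactly why the stated $P$ suffices for it.
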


\begin{proof}
Using the quasigeodesic inequalities and stability constants gives

\begin{equation*}
    \frac{1}{K}|t-s| - C - 2R  \leq d_X(\pi(\gamma(t)), \pi(\gamma(s))) \leq K|t-s| + C + 2R.
\end{equation*}

Notice that in particular for all $t_0 \in [0,L]$

\begin{equation*}
    d_X(\pi(\gamma(t_0 +1)), \pi(\gamma(t_0))) \leq K + C + 2R.
\end{equation*}

Suppose $\pi(\gamma(s)) \geq \pi(\gamma(t))$. Since the geodesic $[\gamma(0), \gamma(L)]$ must eventually travel from $\pi(\gamma(t))$ to $\gamma(L)$ in bounded increments of length at most $K + C + 2R$, there exists $t' \geq t$ such that

\begin{equation*}
    d_X(\pi(\gamma(t')), \pi(\gamma(s))) < K + C + 2R.
\end{equation*}

But then using the lower bound from the quasigeodesic inequality and stability constants we get

\begin{equation*}
    d_X(\pi(\gamma(t')), \pi(\gamma(s))) \geq \frac{1}{K}|t'-s| - C - 2R \geq \frac{1}{K}|t-s| - C - 2R > K + C + 2R,
\end{equation*}a contradiction. Hence $\pi(s) < \pi(t)$.
 
\end{proof}

Let $R_0(K_0, C_0, \delta)$ be a stability constant for the image of $\phi$ in $\C$. Since we may take $R_0$ as large as we like, assume it is an integer greater than 1. For $t_1, t_2 \in T$ we write $[t_1, t_2]_T$ to denote the geodesic between them in $T$.

\begin{lemma}
Let $v_1, v_2 \in V$, $t \in T$, $w \in W$, such that $t, w\in [v_1, v_2]_T$, $t < w, d_T(t,w) > P_0 := 2K_0(C_0 + 2R_0) + K_0^2,$ and $\pi:\phi([v_1, v_2]_T) \to [\phi(v_1), \phi(v_2)]$ is a closest point projection map. If $s \in [\phi(v_1), \pi(\phi(t))]$ then

\begin{equation*}
    \pi_{\phi(w)}(s) \neq \emptyset.
\end{equation*}

Moreover for any simplex $\Delta$ containing $\phi(w)$ we have

\begin{equation*}
    \pi_{\Delta}(s) \neq \emptyset.
\end{equation*}

\end{lemma}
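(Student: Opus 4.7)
The plan is to show that any $s$ in the initial segment $[\phi(v_1),\pi(\phi(t))]$ is far enough from $\phi(w)$ in $\mathcal{C}(S)$ to essentially intersect every curve of $\Delta$, which is exactly what is needed to conclude $\pi_{\Delta}(s) \neq \emptyset$. The key quantitative input is the hypothesis $d_T(t,w) > P_0 = 2K_0(C_0+2R_0) + K_0^2$, which was engineered so that closest-point projection of the $(K_0,C_0)$-quasigeodesic $\phi([v_1,v_2]_T)$ onto the geodesic $[\phi(v_1),\phi(v_2)]$ not only preserves order (by Lemma 5.4), but does so with a quantitative gap.

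First I would invoke Lemma 5.4 to conclude $\pi(\phi(t)) < \pi(\phi(w))$ on the geodesic $[\phi(v_1),\phi(v_2)]$. Next, I would lower-bound the curve-graph distance between these two projection points. By the stability constant $R_0$ from Theorem 5.3, $\phi(t)$ and $\phi(w)$ each lie within $R_0$ of their projections, so two applications of the triangle inequality combined with the quasi-isometric embedding lower bound $d_{\mathcal{C}(S)}(\phi(t),\phi(w)) \geq \tfrac{1}{K_0}d_T(t,w)-C_0$ yield
\begin{equation*}
d_{\mathcal{C}(S)}(\pi(\phi(t)),\pi(\phi(w))) \;\geq\; \tfrac{1}{K_0}d_T(t,w) - C_0 - 2R_0 \;>\; K_0 + C_0 + 2R_0,
\end{equation*}
where the last inequality uses $d_T(t,w) > P_0$.

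Now, since $s$ lies on $[\phi(v_1),\pi(\phi(t))]$ and $\pi(\phi(w))$ is strictly farther along the same geodesic, the reverse triangle inequality gives $d_{\mathcal{C}(S)}(s,\pi(\phi(w))) \geq d_{\mathcal{C}(S)}(\pi(\phi(t)),\pi(\phi(w)))$. Combining this with $d_{\mathcal{C}(S)}(\pi(\phi(w)),\phi(w)) \leq R_0$ (by stability, $\phi(w)$ is within $R_0$ of some geodesic point, and $\pi(\phi(w))$ is the closest such), one more triangle inequality gives $d_{\mathcal{C}(S)}(s,\phi(w)) > K_0 + C_0 + R_0 \geq 3$, using $K_0 \geq 1$ and $R_0 > 1$. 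In particular $s$ is not equal to or disjoint from $\phi(w)$, so it has a lift to the annular cover $Y_{\phi(w)}$ joining the two boundary components, whence $\pi_{\phi(w)}(s)\neq\emptyset$.

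For the moreover statement, any other component $c$ of the simplex $\Delta$ satisfies $d_{\mathcal{C}(S)}(c,\phi(w)) \leq 1$, so the triangle inequality gives $d_{\mathcal{C}(S)}(s,c) \geq d_{\mathcal{C}(S)}(s,\phi(w)) - 1 \geq 2$, meaning $s$ essentially intersects each component of $\Delta$, and hence $\pi_{\Delta}(s) \neq \emptyset$. The main obstacle is purely the bookkeeping needed to verify that the constant $P_0$ chosen in Lemma 5.4 produces a strong enough quantitative gap in Step 2 to survive the two subsequent triangle-inequality losses (one of size $R_0$ from projecting $\phi(w)$, and one of size $1$ in the moreover step); this is exactly where the choice $R_0 > 1$ and the precise value of $P_0$ earn their keep.
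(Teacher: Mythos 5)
Your argument is correct and follows essentially the same route as the paper's proof: order preservation of the closest-point projection (the paper's Lemma 5.5), the quasigeodesic lower bound together with the stability constant $R_0$ to get $d_S(\pi(\phi(t)),\pi(\phi(w))) > K_0 + C_0 + 2R_0$, monotonicity along the geodesic to transfer this to $s$, and one final triangle inequality (losing $R_0$, then $1$ for the other vertices of $\Delta$) to conclude essential intersection. The only discrepancies are cosmetic: your internal references to ``Lemma 5.4'' and ``Theorem 5.3'' correspond to the paper's Lemma 5.5 and Theorem 5.4.
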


Recall that $\pi_\Delta$ is the product of the projections of its vertices and hence $\pi_\Delta(s) \neq \emptyset$ means that for each vertex $p$ of $\Delta$ we have $\pi_p(s) \neq \emptyset.$

\begin{figure}[t]
    \centering
    \includegraphics[trim = 16cm 7.5cm 8cm 7cm, clip = true, scale = 1]{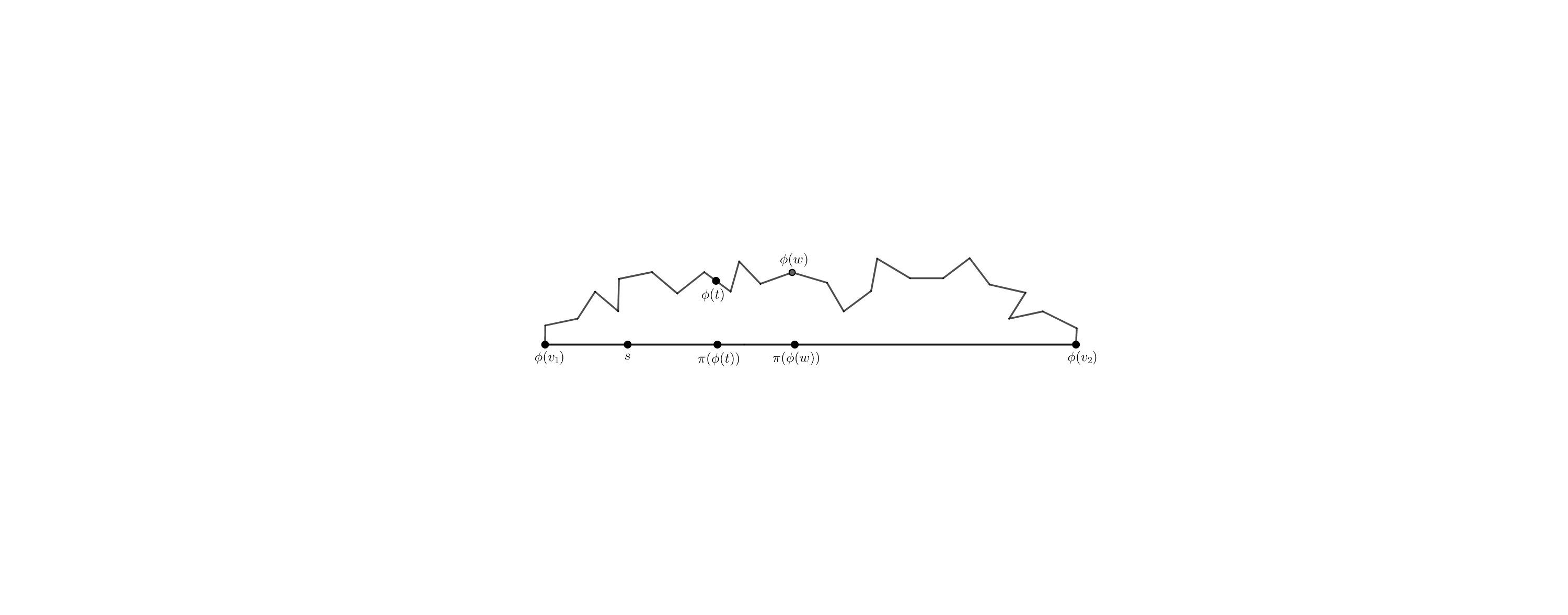}
    \caption{Lemma 5.6}
    \label{fig:my_label}
\end{figure}

\begin{proof}
Under the assumptions of the hypothesis, it follows from Lemma 5.5 that

\begin{equation*}
    \pi(\phi(t)) < \pi(\phi(w)).
\end{equation*}

Notice

\begin{equation*}
    d_S(\pi(\phi(t)), \pi(\phi(w))) > \frac{P_0}{K_0} - C_0 - 2R_0 = 2R_0 + K_0 + C_0 \geq 2R_0 + 1.
\end{equation*}

The last inequality is because at worst, $K_0=1, C_0=0$. Since $s \in [\phi(v_1), \pi(\phi(t))]$ we have
\begin{equation*}
    d_S(s, \pi(\phi(w)))  > 2R_0+1 \Rightarrow d_S(s, \phi(w))  > R_0+1 \Rightarrow \pi_{\phi(w)}(s) \neq \emptyset.
\end{equation*}

The moreover statement follows since the above implies for any vertex $p\in \Delta$
\begin{equation*}
    d_S(s, p)  > R_0 \Rightarrow \pi_p (s) \neq \emptyset.
\end{equation*}

\end{proof}

\begin{lemma}
There exists a constant $C_1 \geq 0$ with the following property. For all edges $e = [w_1, w_2] \subset T$ and any $Z$ a union of disjoint domains $Z_1, ..., Z_N \subset S$ with $\pi_{Z_i}(A(w_1)), \pi_{Z_i}(A(w_2)) \neq \emptyset$ for $i = 1, ..., N$ we have

\begin{equation*}
    d_{Z}(A(w_1), A(w_2)) \leq C_1.
\end{equation*}

In particular, for any $w \in  W$ with $w \neq w_1, w_2$

\begin{equation*}
    d_{A(w)}(A(w_1), A(w_2)) \leq C_1.
\end{equation*}
\end{lemma}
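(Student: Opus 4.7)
The plan is to reduce via $G$-equivariance to bounding $d_Z(A,B)$ for the two fixed multicurves $A,B$ associated to the factors, and then to combine a topological bound on the number of disjoint summands with uniform bounds on each individual projection distance. Since $X$ has a single edge, $T$ has a single $G$-orbit of edges; I would choose $g \in G$ so that $g \cdot [w_1, w_2] = [a,b]$ is the basepoint edge with $A(a) = A$ and $A(b) = B$. Subsurface projections are $G$-equivariant, so $d_{Z}(A(w_1), A(w_2)) = d_{g^{-1}Z}(A,B)$, and it suffices to prove a uniform bound $d_{Z}(A,B) \leq C_1$ over all unions of disjoint subdomains $Z = Z_1 \sqcup \cdots \sqcup Z_N$ with $\pi_{Z_i}(A), \pi_{Z_i}(B) \neq \emptyset$.

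Next, I would bound $N$ purely topologically: the boundaries of the $Z_i$ together with the cores of those $Z_i$ which are annular form a multicurve on $S$, whose number of components is bounded by a function $N_0(\xi(S))$. To bound each individual $d_{Z_i}(A,B)$, I would use that $A$ and $B$ are fixed multicurves, so $i(A,B)$ is finite and fixed. For $Z_i$ with $\xi(Z_i) \geq 1$, the standard intersection-number estimate (following from \cite[Lemma 2.2]{MM2} together with Theorem 2.3) gives $d_{Z_i}(A,B) \leq 2\log_2 i(A,B) + 2$. For annular $Z_i$ with core $\gamma$, I would use Lemma 5.3 (so that $A \cup B$ fills $S$) together with the bounded geodesic image theorem (Theorem 2.2) to bound the projection $d_\gamma(A,B)$ uniformly; the PGF hypothesis enters crucially here by supplying the equivariant quasiisometric embedding $\Ghat \hookrightarrow \C$, which controls the position of $\gamma$ relative to $A$ and $B$. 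Summing the uniformly bounded terms over the at most $N_0$ summands yields $d_Z(A,B) \leq C_1$ for some $C_1$ depending only on $G$.

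For the \emph{in particular} statement, I would apply the main statement with $Z = A(w)$ viewed as a disjoint union of annular subdomains about each of its component curves. By Lemma 5.3 applied to the distinct elements $A(w_1), A(w) \in \mathcal{A}$, the union $A(w_1) \cup A(w)$ fills $S$; since the components of $A(w)$ are pairwise disjoint, each such component must essentially intersect $A(w_1)$, giving $\pi_Y(A(w_1)) \neq \emptyset$ for each annular subdomain $Y$ in the decomposition of $A(w)$. The same reasoning applied to $A(w) \cup A(w_2)$ handles $A(w_2)$, so the hypotheses of the main statement are met and the bound transfers.

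I expect the main obstacle to be bounding the annular projections in the third step. For arbitrary multicurves, $d_\gamma(A,B)$ over varying annular cores $\gamma$ is not uniformly bounded (one can twist $\gamma$ along curves meeting $A$ or $B$ to inflate the projection), so one must genuinely use the PGF structure of $G$, not merely the topology of $S$, to control the annular case; this is where the equivariant quasiisometric embedding $\Ghat \hookrightarrow \C$ together with the filling property from Lemma 5.3 must do the essential work.
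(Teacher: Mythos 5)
Your overall skeleton---reduce to a single edge by equivariance and the single edge orbit of $T$, bound the number of components of $Z$ topologically, and bound each $d_{Z_i}(A,B)$ by a quantity depending only on the fixed pair $A,B$---is the same as the paper's proof. The problem is the annular case, which you single out as the main obstacle and then handle incorrectly. Your premise that $d_\gamma(A,B)$ is not controlled by $i(A,B)$ as the core $\gamma$ varies is false: for an annular domain $Y$, distinct lifts of a simple closed curve to the annular cover are pairwise disjoint, so distinct intersection points of a lift of $\alpha$ with a lift of $\beta$ project to distinct points of $\alpha\cap\beta$, and equation (1) then gives $d_Y(\alpha,\beta)=|\alpha\cdot\beta|+1\le i(\alpha,\beta)+1$. (This is exactly the computation used in the proof of Lemma 4.2.) Your proposed inflation mechanism does not produce a counterexample: replacing $\gamma$ by $t_\delta^n(\gamma)$ gives $d_{t_\delta^n(\gamma)}(A,B)=d_\gamma(t_\delta^{-n}A,\,t_\delta^{-n}B)$, and $i(t_\delta^{-n}A,\,t_\delta^{-n}B)=i(A,B)$ is unchanged, so the intersection bound persists. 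The paper's proof simply observes that $i(A(w_1),A(w_2))$ bounds the projection distance to \emph{every} subsurface with nonempty projections, annular or not, multiplies by the number of components of $Z$ (at most $\xi(S)$), and transfers to all edges by equivariance.

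The substitute you propose for the annular case would also not go through as written. The core of $Z_i$ is an arbitrary curve of $S$ subject only to having nonempty projections of $A$ and $B$; it bears no relation to $G$, so the equivariant embedding $\Ghat\to\C$ gives no control on its position, Lemma 5.3 does not apply to it (it need not lie in $\mathcal{A}$), and Theorem 2.2 only bounds $d_\gamma(A,B)$ when a geodesic $[A,B]$ avoids the star of $\gamma$ --- which can certainly fail, e.g.\ when $\gamma$ is adjacent to an interior vertex of $[A,B]$. Once this step is replaced by the intersection-number bound, your argument coincides with the paper's; your verification of the hypotheses for the ``in particular'' statement via Lemma 5.3 is correct and is what the paper does.
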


\begin{proof}
Fix an edge $e = [w_1, w_2]$. The number of intersections between $A(w_1)$ and $A(w_2)$ bounds the distance between their projections to any subsurfaces to which they have nonempty projections. Then there is a constant $C_1'(w_1, w_2)$ such that for all $Y \subset S$ with $\pi_Y(A(w_1)), \pi_Y(A(w_2)) \neq \emptyset$

\begin{equation*}
    d_Y (A(w_1), A(w_2)) < C_1'.
\end{equation*}

Notice that $C_1'$ does not depend on $Y$. Since $Z$ has at most $\xi(S) = 3g+p-3$ components there exists $C_1(S, w_1, w_2)$ such that

\begin{equation*}
    d_Z (A(w_1), A(w_2)) < C_1.
\end{equation*}

Again, notice $C_1$ does not depend on $Z$. Let $e' = [w_3, w_4]$ be another edge with $\pi_{Z_i}(A(w_3)), \pi_{Z_i}(A(w_4)) \neq \emptyset$ for $i = 1, ..., N$. Recall that there is a single edge orbit in $T$. So there is some $g \in G$ such that $ge=e'$. Since for all $g' \in G, w' \in W$ we have $A(g'w') = g'A(w')$ it follows that

\begin{multline*}
    d_Z (A(w_3), A(w_4))  = d_Z (A(gw_1), A(gw_2)) \\ = d_Z (gA(w_1), gA(w_2)) = d_{g^{-1}Z} (A(w_1), A(w_2)) \leq C_1.
\end{multline*}

For the ``in particular'' statement notice that for any $w, w' \in W$, $w \neq w'$ implies $\pi_{A(w)} (A(w')) \neq \emptyset$ since $G$ is PGF and hence $A(w) \cup A(w')$ is a marking by Lemma 5.3.
\end{proof}

Define a \textit{half-edge} to be a segment of length $\frac{1}{2}$ in $T$ with one endpoint in $W$ and the other in $V$.

\begin{lemma}

There exists a constant $C_2 \geq 0$ with the following property. For all half-edges $h = [v, w] \subset T$ and any Z a union of disjoint domains $Z_1, ..., Z_N \subset S$ with $\pi_{Z_i}(A(w)) \neq \emptyset$
for $i = 1, ..., N$ we have

\begin{equation*}
    d_Z(\mu(v), A(w)) \leq C_2.
\end{equation*}

In particular, for any $w' \in W$ with $w' \neq w$

\begin{equation*}
    d_{A(w')} (\mu(v), A(w)) \leq C_2.
\end{equation*}
\end{lemma}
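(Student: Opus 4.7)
The strategy is to mirror the proof of Lemma 5.7, replacing edges of $T$ with half-edges and replacing one of the multicurves $A(w_i)$ with the marking $\mu(v)$. The one structural difference is that $T$ has a single $G$-orbit of edges but exactly two $G$-orbits of half-edges, corresponding to whether the $W$-endpoint lies in $W_A$ or $W_B$, since the $G$-action preserves the partition $W = W_A \sqcup W_B$. Accordingly, I would fix two reference half-edges $h_A = [v_0, a]$ and $h_B = [v_0, b]$, where $a, b, v_0$ are the basepoints used to define $\mu$, so that $\mu(v_0) = \mu$, $A(a) = A$, and $A(b) = B$.

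On each reference half-edge, the bound is essentially combinatorial: $\mu$ is a fixed $R$-marking consisting of finitely many simple closed curves pairwise intersecting at most $R$ times, and $A$ (respectively $B$) is a fixed multicurve with finitely many components, so the geometric intersection numbers $i(\mu, A)$ and $i(\mu, B)$ are finite. As in Lemma 5.7, geometric intersection number dominates subsurface distance, so there exist constants $C_{2,A}', C_{2,B}'$, independent of $Y$, such that $d_Y(\mu, A) \leq C_{2,A}'$ and $d_Y(\mu, B) \leq C_{2,B}'$ for every subsurface $Y$ on which both relevant projections are nonempty. Since any disjoint union $Z$ has at most $\xi(S)$ components, summing yields constants $C_{2,A}$ and $C_{2,B}$.

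Next I would transport these bounds by $G$-equivariance. Any half-edge $[v, w]$ with $w \in W_A$ is of the form $g \cdot h_A$ for some $g \in G$, so $\mu(v) = g\mu$ and $A(w) = gA$, and equivariance of subsurface projection gives
\[
d_Z(\mu(v), A(w)) = d_{g^{-1} Z}(\mu, A) \leq C_{2,A},
\]
provided the projections on the right-hand side are nonempty. Nonemptiness of $\pi_{g^{-1} Z_i}(A)$ follows from the hypothesis $\pi_{Z_i}(A(w)) \neq \emptyset$ via equivariance, while nonemptiness of $\pi_{g^{-1} Z_i}(\mu)$ uses that $\mu$ is a \emph{filling} collection of curves: any filling system meets every essential simple closed curve, so it projects nontrivially to every essential subsurface. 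The argument for $w \in W_B$ is identical, and setting $C_2 = \max(C_{2,A}, C_{2,B})$ finishes the main statement.

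For the ``in particular'' clause with $Z = A(w')$, $w' \neq w$, I would invoke Lemma 5.3 to conclude that $A(w) \cup A(w')$ fills $S$, which immediately yields $\pi_{A(w')}(A(w)) \neq \emptyset$ component-wise, while the filling property of $\mu(v)$ gives $\pi_{A(w')}(\mu(v)) \neq \emptyset$; the main inequality then applies. The one nontrivial step is the nonemptiness bookkeeping under the $G$-translation — making sure the equivariant pullback of $Z$ admits nonempty projections of both $\mu$ and $A$ — but this is handled uniformly by the filling property and equivariance, so the argument reduces to the intersection-number bound and the two-orbit dichotomy.
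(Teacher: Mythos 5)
Your proposal is correct and follows essentially the same route as the paper: the paper's proof simply says to run the argument of the preceding lemma (intersection-number bound on a fixed reference object, sum over the at most $\xi(S)$ components of $Z$, transport by equivariance) once for each of the two half-edge orbits and take the maximum, which is exactly what you do. Your added bookkeeping on nonemptiness of $\pi_{Z_i}(\mu(v))$ via the filling property of $\mu$ is a detail the paper leaves implicit, but it is consistent with the intended argument.
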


\begin{proof}

The proof is similar to the proof of the preceding lemma. But unlike the preceding lemma, there are two half-edge orbits. So first assume that $w \in W_A$, run through the argument, and obtain a bound. Using the same argument, find a bound when $w \in W_B$ and take $C_2$ to be the maximum of the two cases.
\end{proof}

\begin{lemma}
Let $v_1, v_2 \in V$. Then for all $w \in W$ such that $d_w(v_1, v_2)>0$ we have for $i=1,2$
\begin{equation*}
    d_{A(w)}(\mu(v_i), \mu(v(\pi_w(v_i)))) \leq C_3,
\end{equation*}
where $C_3 = P_0 C_1 + 2C_2 + \xi(S)(2M + k(R) + 2)$ and $M$ is the constant from Theorem 2.2.

\end{lemma}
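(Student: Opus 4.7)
The plan is to fix $i=1$ by symmetry, set $v_1':=v(\pi_w(v_1))$, and work inside the tree $T$. Since $d_w(v_1,v_2)>0$ forces $w\in[v_1,v_2]_T$, the vertex $v_1'$ is the $V$-vertex adjacent to $w$ on the $v_1$ side. Enumerate the $W$-vertices that appear along the sub-geodesic $[v_1,v_1']_T$ in order as $w_0,w_1,\ldots,w_{k-1}$, so that $w_0$ is adjacent to $v_1$, $w_{k-1}$ is adjacent to $v_1'$, each $w_j\neq w$, and $k=d_T(v_1,v_1')$. The overall strategy combines three ingredients: Lemma 5.8 at the two endpoints, Lemma 5.7 telescoped along a short near-$w$ portion, and Lemma 5.6 together with Masur--Minsky's bounded geodesic image theorem (Theorem 2.2) for the remote portion.

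First apply Lemma 5.8 to the half-edges $[v_1,w_0]$ and $[v_1',w_{k-1}]$; since $w_0,w_{k-1}\neq w$, this gives $d_{A(w)}(\mu(v_1),A(w_0))\leq C_2$ and $d_{A(w)}(\mu(v_1'),A(w_{k-1}))\leq C_2$. In the short case $k-1\leq P_0$, telescoping Lemma 5.7 along the consecutive pairs $(w_j,w_{j+1})$ immediately gives $d_{A(w)}(A(w_0),A(w_{k-1}))\leq P_0 C_1$, and the triangle inequality yields $d_{A(w)}(\mu(v_1),\mu(v_1'))\leq 2C_2+P_0C_1\leq C_3$.

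In the long case $k-1>P_0$, set $j^*:=k-1-P_0$, so $d_T(w_{j^*},w)=P_0+1>P_0$. Telescoping Lemma 5.7 over the near portion $w_{j^*},\ldots,w_{k-1}$ still bounds $d_{A(w)}(A(w_{j^*}),A(w_{k-1}))\leq P_0 C_1$. For the far portion I would invoke Lemma 5.6 with $t:=w_{j^*}$: by its moreover statement, every vertex of the geodesic segment $[\phi(v_1),\pi(\phi(w_{j^*}))]$ has nonempty projection to each annular component of $A(w)$, so Theorem 2.2 applied componentwise yields $d_{A(w)}(\phi(v_1),\pi(\phi(w_{j^*})))\leq \xi(S)M$. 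A direct distance estimate using $P_0=2K_0(C_0+2R_0)+K_0^2$ shows that $\phi(w_{j^*})$ lies at distance well above $R_0+1$ from $\phi(w)$, so every vertex of the short stability segment $[\pi(\phi(w_{j^*})),\phi(w_{j^*})]$ (whose length is at most $R_0$) also projects nontrivially to every component of $A(w)$, and a second application of Theorem 2.2 gives $d_{A(w)}(\pi(\phi(w_{j^*})),\phi(w_{j^*}))\leq \xi(S)M$.

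Using $\phi(v_1)\in\mu(v_1)$ with Lemma 4.2 to obtain $d_{A(w)}(\mu(v_1),\phi(v_1))\leq \xi(S)k(R)$, and $\phi(w_{j^*})\in A(w_{j^*})$ with Theorem 2.3 to obtain $d_{A(w)}(\phi(w_{j^*}),A(w_{j^*}))\leq \xi(S)$, the triangle inequality along the chain
\[
\mu(v_1)\to\phi(v_1)\to\pi(\phi(w_{j^*}))\to\phi(w_{j^*})\to A(w_{j^*})\to A(w_{k-1})\to\mu(v_1')
\]
sums to at most $\xi(S)k(R)+2\xi(S)M+\xi(S)+P_0 C_1+C_2\leq C_3$, completing the long case. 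The main technical obstacle is verifying that every vertex on the relevant geodesic segments lies at distance strictly greater than $1$ from \emph{every} component of $A(w)$ (not merely from the single curve $\phi(w)$), which is precisely what the defining formula for $P_0$ is designed to ensure.
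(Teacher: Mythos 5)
Your proposal is correct and follows essentially the same route as the paper: split according to whether the portion of $[v_1,v(\pi_w(v_1))]_T$ lying farther than $P_0$ from $w$ is empty, handle the near portion by telescoping Lemma 5.7 with Lemma 5.8 at the endpoints, and handle the far portion via Lemma 5.6 together with the bounded geodesic image theorem applied componentwise to $[\phi(v_1),\pi(\phi(w_{j^*}))]$ and to the short stability segment. Your choice of $j^*=k-1-P_0$ is just a reparametrization of the paper's ``smallest $j$ with $d_T(w_j,w)\leq P_0$,'' and your constant accounting lands within the stated $C_3$.
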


\begin{figure}[h]
    \centering
    \includegraphics[trim = 19cm 9.5cm 3cm 9cm, clip = true, scale = .8]{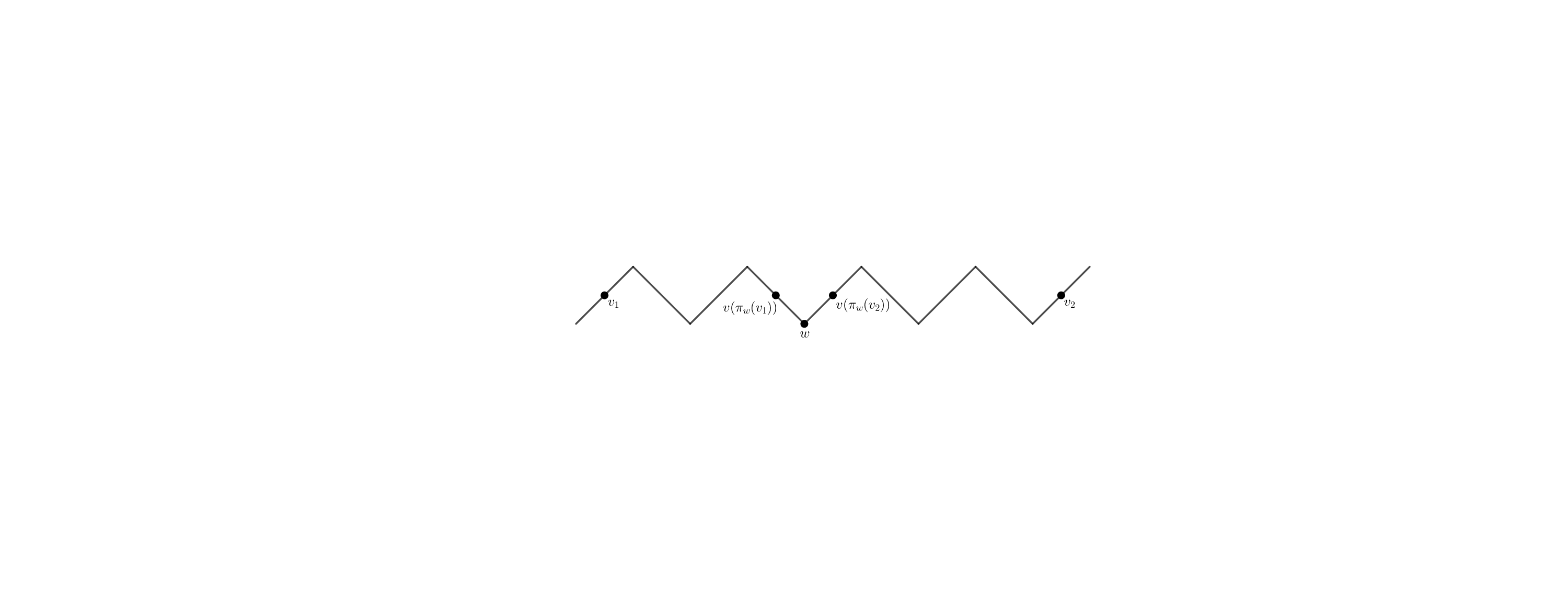}
    \caption{Lemma 5.9}
    \label{fig:5.9}
\end{figure}

\begin{proof}
There are 4 cases describing whether the points $v_1, v_2, w$ are $P_0$-close in $T$. The worst case is when $d_T(v_i, w) > P_0$ for $i=1,2$ in the sense that the bound we derive from it is the largest and works for the other 3 cases. So assume $d_T(v_i, w) > P_0.$ Notice that by Lemma 5.6 this implies $\pi_{A(w)}(\phi(v_i)) \neq \emptyset.$

Consider the geodesic $[v_1, v_2]_T$ in $T$. Label vertices of $W$ traversed in order from $v_1$ to $v_2$ by $w_1, ..., w_N$. That is
\begin{equation*}
   [v_1, v_2]_T = [v_1, w_1]_T \cup [w_1, w_2]_T \cup ... \cup [w_N, v_2]_T.
\end{equation*}
Suppose $w_i = w$. Let $1\leq j \leq N$ be the smallest number such that $d_T(w_j, w) \leq P_0$ . Either $j=1$ or $d_T(w_{j-1}, w) > P_0$.

\textbf{Case 1:} $j = 1.$

We have by the triangle inequality and Lemmas 5.7, 5.8

\begin{multline*}
    d_{A(w)} (\mu(v_1), \mu(v(\pi_w(v_1)))) \leq d_{A(w)} (\mu(v_1), A(w_1)) + \sum_{l=1}^{i-2} d_{A(w)} (A(w_l), A(w_{l+1})) \\ + d_{A(w)} (A(w_{i-1}), \mu(v(\pi_w(v_1)))) \leq (P_0 -1)C_1 + 2C_2.
\end{multline*}

\textbf{Case 2:} $d_T(w_{j-1}, w) > P_0.$

It follows from Lemma 5.6 that for all $s \in [\phi(v_1), \pi(\phi(w_{j-1}))]$ we have

\begin{equation*}
    \pi_{A(w)} (s) \neq \emptyset,
\end{equation*}
where $\pi: \phi([v_1, v_2]_T) \to [\phi(v_1), \phi(v_2)]$ is a closest point projection map. Hence it follows from Theorem 2.2, Theorem 2.3, and Lemma 4.2 that

\begin{equation*}
    d_{A(w)}(\mu(v_1), \pi(A(w_{j-1}))) \leq \xi(S)(M + 2 + k(R)).
\end{equation*}

The $\phi$-quasiisometry lower bound gives

\begin{equation*}
    d_S(\phi(w_{j-1}), \phi(w)) \geq \frac{1}{K_0}d_T(w_{j-1}, w) - C_0 > \frac{1}{K_0}P_0 - C_0 = 4R_0 + K_0 + C_0 \geq 4R_0 + 1.
\end{equation*}

Since $d_S(\phi(w_{j-1}), \pi(\phi(w_{j-1}))) \leq R_0$ we have for all $s \in [\phi(w_{j-1}), \pi(\phi(w_{j-1}))]$

\begin{equation*}
    \pi_{A(w)}(s) \neq \emptyset,
\end{equation*}
and hence again by Theorem 2.2, Theorem 2.3, and Lemma 4.2 we have that

\begin{equation*}
    d_{A(w)}(\mu(v_1), A(w_{j-1})) \leq \xi(S)(2M + 2 + k(R)).
\end{equation*}

By the triangle inequality we have

\begin{multline*}
    d_{A(w)} (A(w_{j-1}), \mu(v(\pi_{w}(v_1)))) \leq \sum_{l=j-1}^{i-2} d_{A(w)} (A(w_l), A(w_{l+1}))  + d_{A(w)} (A(w_{i-1}), \mu(v(\pi_{w}(v_1)))) \\ \leq P_0 C_1 + C_2.
\end{multline*}

Combining this with the above we have

\begin{equation*}
    d_{A(w)}(\mu(v_1), \mu(v(\pi_w(v_1)))) \leq P_0 C_1 + C_2 + \xi(S)(2M + k(R) + 2).
\end{equation*}

So in any case

\begin{equation*}
    d_{A(w)}(\mu(v_1), \mu(v(\pi_w(v_1)))) \leq P_0 C_1 + 2C_2 + \xi(S)(2M + k(R) + 2).
\end{equation*}

The same argument works for $v_2$
\end{proof}

Given any $w \in W_A,$ let $g(w)$ denote an element such that associated coset of $w$ is $g(w)H_A.$ Let $H(w) = g(w) H_A g(w)^{-1}$ and let $H_{\mbox{max}}(w)$ denote the group generated by Dehn twists about each component of $A(w).$ $H_{\mbox{max}}(w)$ is maximal in the sense that, whatever the generators of $H_A$ may be, it contains $H(w)$ as a subgroup. If $S_A$ is a generating set for $H_A$,  let $|| \cdot ||$ denote the minimal word length in $H(w)$ with its generating set $g(w) S_A g(w)^{-1}.$ Given $v_1, v_2 \in V$ let $h(v_1), h(v_2)$ denote the elements of $H(w)$ that correspond to $\pi_w(v_1), \pi_w(v_2),$ respectively. Notice that since Euclidean metrics are quasiisometric to their respective $L^1$-metrics, there exists constants so that such that

\begin{equation*}
    d_w(v_1, v_2) \asymp ||h(v_1)^{-1}h(v_2)||.
\end{equation*}

Let $|| \cdot ||_{\mbox{max}}$ denote the minimal word length in $H_{\mbox{max}}(w)$ with generating set the Dehn twists about $A(w)$ and let $h_{\mbox{max}}(v_1), h_{\mbox{max}}(v_2)$ denote the elements of $H_{\mbox{max}}(w)$ that correspond to $\pi_w(v_1), \pi_w(v_2),$ respectively. It is easy to see that

\begin{equation*}
    ||h_{\mbox{max}}(v_1)^{-1}h_{\mbox{max}}(v_2)||_{\mbox{max}} \asymp d_{A(w)}(\phi(v(\pi_w(v_1))), \phi(v(\pi_w(v_2)))),
\end{equation*}
for some constants. Since the inclusion of $H(w)$ into $H_{\mbox{max}}(w)$ is a quasiisometric embedding, it then follows that

\begin{equation*}
    d_w(v_1, v_2) \asymp_{K_A, C_A} d_{A(w)}(\phi(v(\pi_w(v_1))), \phi(v(\pi_w(v_2)))),
\end{equation*}
for some constants $K_A \geq 1, C_A \geq 0$. Then by Lemma 2.1, assuming $\kappa_A > 2K_A C_A,$ we have

\begin{equation*}
    \sum_{w \in W_A} [d_w(v_1, v_2)]_{\kappa_A} \preceq_{2K_A, 0} \sum_{w \in W_A} [d_{A(w)}(\phi(v(\pi_w(v_1))), \phi(v(\pi_w(v_2))))]_{C_A},
\end{equation*}
and since $\mu(v(\pi_w(v_i)))$ contains $\phi(v(\pi_w(v_i)))$ we have

\begin{equation*}
    \sum_{w \in W_A} [d_w(v_1, v_2)]_{\kappa_A} \preceq_{2K_A, 0} \sum_{w \in W_A} [d_{A(w)}(\mu(v(\pi_w(v_1))), \mu(v(\pi_w(v_2))))]_{C_A}.
\end{equation*}

A similar inequality holds for $w \in W_B$ with constants say $K_B \geq 1, C_B \geq 1, \kappa_B > K_B C_B$. Taking $\kappa_4 = max\{ \kappa_A, \kappa_B\}, K_4 = max\{K_A, K_B\}, C_4 = min\{C_A, C_B\}$ yields

\begin{equation}
    \sum_{w \in W} [d_w(v_1, v_2)]_{\kappa_4} \preceq_{2K_4, 0} \sum_{w \in W} [d_{A(w)}(\mu(v(\pi_w(v_1))), \mu(v(\pi_w(v_2))))]_{C_4}.
\end{equation}

We are now ready to prove Theorem 1.2.

\begin{proof}[Proof of Theorem \ref{thm: undistorted}]

Recall that it suffices to show that there exists $K \geq 1, C \geq 0$ such that for any given $v_1, v_2 \in V$ we have

\begin{equation*}
    \frac{1}{K}d_\M(\mu(v_1), \mu(v_2)) - C \leq d_{\widetilde{X}} (v_1, v_2) \leq Kd_\M(\mu(v_1), \mu(v_2)) + C.
\end{equation*}

The coarse lower bound follows from the triangle inequality so we prove the upper bound.

\begin{equation*}
    \begin{split}
        d_{\widetilde{X}}(v_1, v_2)  & = d_T(v_1, v_2) + \sum_{w \in W} d_{w}(v_1, v_2)\\
        & \preceq_{\kappa_0+ 1, \kappa_0^2 + 2\kappa_0} [d_T(v_1, v_2)]_{\kappa_0} + \sum_{w \in W} [d_w(v_1, v_2)]_{\kappa_0} \\
        & \preceq_{2K_0, 0} [d_S(\phi(v_1), \phi(v_2))]_{C_0} + \sum_{w \in W} [d_w(v_1, v_2)]_{\kappa_0} \\
        &\preceq_{2K_4, 0} [d_S(\phi(v_1), \phi(v_2))]_{C_0} + \sum_{w \in W} [d_{A(w)}(\mu(v(\pi_w(v_1))), \mu(v(\pi_w(v_2))))]_{C_4} \\
        & \preceq_{2, 0} [d_S(\phi(v_1), \phi(v_2))]_{C_0} + \sum_{w \in W} [d_{A(w)}(\mu(v_1), \mu(v_2))]_{2C_3}\\
        & \asymp_{\xi(S) + 1, 0} \sum_{Y \subset S} [d_Y (\mu(v_1), \mu(v_2))]_{\kappa_1}\\
        & \asymp_{K_5(\kappa_1), C_5(\kappa_1)} d_{\M} (\mu(v_1), \mu(v_2)).\\
    \end{split}
\end{equation*}

The second line is by Lemma 5.2 for some $\kappa_0 > 0$. The third is by Lemma 2.1 since $d_T (v_1, v_2) \asymp_{K_0,C_0} d_S (\phi(v_1), \phi(v_2))$, assuming $\kappa_0 > 2K_0 C_0$. The fourth is by equation (6) assuming $\kappa_0 > \kappa_4$. The fifth is by Lemmas 2.1 and 5.9 assuming $C_4 > 4C_3.$ The sixth is taking $\kappa_1 = min \{ C_0, \frac{2C_3}{\xi(S)}\}.$ Finally, the last comparability is by Theorem 4.3.
\end{proof}

In fact there is even more information about the geometry of $G$ in $\Mod$. Let $\mathcal{A}$ be the collection of all the component curves of all multicurves $A(w)$ and let $\MG$ denote the $\mu$-image of $V$ in $\M$.

\begin{thm}
There exists some $M_1 > 0$ such that for all domains $Y \neq Y_\alpha, S$ with $\alpha \in \mathcal{A}$ and for all $\mu_1, \mu_2 \in \MG$ we have

\begin{equation*}
    d_Y (\mu_1, \mu_2) \leq M_1.
\end{equation*}

\end{thm}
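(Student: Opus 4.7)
The plan is to combine the bounded geodesic image theorem (Theorem 2.2) with the PGF quasi-isometric embedding $\phi \colon T \to \mathcal{C}(S)$. We may assume $\pi_Y(\mu_1), \pi_Y(\mu_2) \neq \emptyset$, since otherwise the distance is vacuously small. Since each $\mu_i$ contains $\phi(v_i)$ as a component curve, Lemma 4.2 reduces the problem to bounding $d_Y(\phi(v_1), \phi(v_2))$ up to an additive $2k(R)$.

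Consider the geodesic $\gamma = [\phi(v_1), \phi(v_2)]$ in $\mathcal{C}(S)$. If every vertex of $\gamma$ has nonempty $\pi_Y$-projection, Theorem 2.2 immediately gives $d_Y(\phi(v_1), \phi(v_2)) \leq M$ and we are done. Otherwise, call a vertex of $\gamma$ \emph{bad} if its $\pi_Y$-projection is empty. Each bad vertex is a curve disjoint from $Y$ in $S$, hence within $\mathcal{C}(S)$-distance $1$ of each component of $\partial Y$. By the triangle inequality, any two bad vertices are within $\mathcal{C}(S)$-distance $2$ of each other; since they lie on the geodesic $\gamma$, at most three bad vertices can occur, and they are confined to a single subsegment of $\gamma$ of length at most $2$.

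I would then split $\gamma$ at the bad cluster. The two resulting maximal good subsegments each have $\pi_Y$-diameter at most $M$ by Theorem 2.2. The remaining task, and the main obstacle, is the \emph{bridging step}: bounding the $d_Y$-distance between the last good vertex preceding the bad cluster and the first good vertex following it. This is where the hypothesis $Y \neq Y_\alpha$ for all $\alpha \in \mathcal{A}$ is essential; note that precisely when $Y = Y_\alpha$ with $\alpha$ passing through the geodesic, the twist $t_\alpha$ can translate things arbitrarily far in $\mathcal{C}(Y_\alpha)$, which is why one cannot hope for a uniform bridging bound in that case.

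To carry out the bridging, I would use stability of quasigeodesics (Theorem 5.4) to place the bad cluster uniformly close in $\mathcal{C}(S)$ to the quasigeodesic $Q = \phi([v_1, v_2]_T)$, and invoke Lemma 5.3 together with the hypothesis $\partial Y \notin \mathcal{A}$ to argue that no multitwist in the nearby factor $H(w)$ acts on $\mathcal{C}(Y)$ as an annular twist about $\partial Y$. Such multitwists then restrict to products of Dehn twists about curves essential in $Y$ but distinct from $\partial Y$, and hence have uniformly bounded orbits on $\mathcal{C}(Y)$. Combining this bounded-orbit property with Lemmas 5.7 and 5.8 for the transition between adjacent multicurves yields a uniform bridging bound, giving $d_Y(\mu_1, \mu_2) \leq M_1$ for a constant $M_1$ depending only on $S$, the PGF quasi-isometry constants, and universal hyperbolicity data.
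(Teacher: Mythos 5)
Your overall architecture matches the paper's: split into the case where Theorem 2.2 applies directly to $[\phi(v_1),\phi(v_2)]$ and the case where a short ``bad'' subsegment (at most three consecutive vertices) has empty projection to $Y$, handle the good subsegments with Theorem 2.2, and bridge across the bad cluster by passing to the quasigeodesic $\phi([v_1,v_2]_T)$ and chaining Lemmas 5.7 and 5.8 over a uniformly bounded number of multicurves $A(w_l),\dots,A(w_m)$. You also correctly identify the bridging step as the crux and the place where the hypothesis on $Y$ enters. However, the mechanism you give for the bridge is wrong. The problematic situation is a vertex $w_p$ with $\pi_Y(A(w_p))=\emptyset$, which breaks the telescoping sum $\sum_n d_Y(A(w_n),A(w_{n+1}))$. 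In that situation every component of $A(w_p)$ misses $Y$ essentially, so these curves are \emph{disjoint} from $Y$ (or peripheral in it) --- they are not ``curves essential in $Y$ but distinct from $\partial Y$'' as you assert; a curve essential in $Y$ would have nonempty projection. The correct statement, and the one the paper uses, is that the element $t\in H(w_p)$ with $t(A(w_{p-1}))=A(w_{p+1})$ is a product of twists about curves disjoint from $Y$ and hence acts \emph{trivially} on $\mathcal{C}(Y)$, so $\pi_Y(A(w_{p-1}))=\pi_Y(A(w_{p+1}))$ and the offending terms can be deleted or replaced. The hypothesis $Y\neq Y_\alpha$ is exactly what rules out the one case where ``disjoint support'' fails to give a trivial action, namely $Y$ equal to the annulus about a component of $A(w_p)$, where $t_\alpha$ translates in $\mathcal{C}(Y_\alpha)$. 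Note also that the hypothesis is $Y\neq Y_\alpha$, not $\partial Y\notin\mathcal{A}$: the theorem does apply to non-annular $Y$ whose boundary lies in $\mathcal{A}$, and your reformulation would wrongly exclude those.

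Beyond the misidentification of which curves are involved, the claim you lean on --- that Dehn twists about curves essential in $Y$ have uniformly bounded orbits on $\mathcal{C}(Y)$ --- is false in general ($t_\alpha$ fixes the vertex $\alpha\in\mathcal{C}(Y)$, but $d_Y(\beta,t_\alpha^n\beta)$ is only bounded in terms of $d_Y(\alpha,\beta)$, not uniformly), so even granting your description of the twists the bridge would not close. Two smaller omissions: you need to verify that the connecting geodesics from the good subsegments of $[\phi(v_1),\phi(v_2)]$ over to $\phi(w_l)$ and $\phi(w_m)$ themselves have nonempty projection to $Y$ (the paper arranges this with an explicit $R_0+5D$ buffer before invoking Theorem 2.2 on those connectors), and you need the observation (via Lemma 5.3, since any two distinct multicurves in $\mathcal{A}$ fill $S$) that at most one $A(w_p)$ along the bridge can have empty projection, so the delete-or-replace surgery on the sum only happens once. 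With the bridging mechanism corrected as above, your outline becomes the paper's proof.
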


In full detail the following proof is slightly technical, so we give a brief description of the main idea. Since $\mu_1, \mu_2 \in \MG,$ there are vertices $v_1, v_2 \in T$ such that $\mu(v_i) = \mu_i$ for $i = 1, 2.$ The generic case is when the boundary of $Y$ is far from the geodesic $[\phi(v_1) ,\phi(v_2)]$ and the bound follows from Theorem 2.2. When Theorem 2.2 is not applicable there is some segment $\gamma_Y$ of $[\phi(v_1) ,\phi(v_2)]$ which has empty projection to $Y.$ The general situation to consider here is when $\partial Y$ lies somewhere in the middle of $[\phi(v_1), \phi(v_2)].$

We describe an explicit path in $\C$ from $\phi(v_1)$ to $\phi(v_2)$ that has nonempty bounded projection to $Y.$ See figure 9. Starting at $\phi(v_1)$, we move along $[\phi(v_1), \phi(v_2)]$ and get uniformly close to $\gamma_Y$. Then we move along a geodesic to a vertex $\phi(w_l)$ in $\phi([v_1, v_2]_T),$ the quasigeodesic image of the geodesic in $T$ connecting $v_1$ and $v_2$. By Theorem 2.2 the concatenation of these two paths has diameter no more than $2M$ in $\CY$. Similarly there is a path from $\phi(v_2)$ to some vertex $\phi(w_m)$ in $\phi([v_1, v_2]_T)$ with $\CY$-diameter no more than $2M$. Finally, we use the $\phi$-quasiisometric embedding constants to bound the length of $\phi([w_l, w_m]_T),$ which is the segment of $\phi([v_1, v_2]_T)$ connecting $\phi(w_l)$ and $\phi(w_m),$ and use Lemma 5.7 to bound the projection of this segment to $\CY$.

The concern one should have is whether the path we've described has vertices with empty projection to $Y.$ By our choice of ``close'' to $\gamma_Y$, we can assure the paths from $\phi(v_1)$ to $\phi(w_l)$ and from  $\phi(w_m)$ to $\phi(v_2)$ have nonempty projection. Finally, for the part connecting $\phi(w_l)$ to $\phi(w_m)$, we describe in the proof why one can simply delete or replace the segments with empty projection from a sum that bounds the distance.

Our last remark is that we make repeated use of the constants $M, 2,$ and $k(R)$ from Theorem 2.2, Theorem 2.3, and Lemma 4.2, respectively, throughout the following proof without stating the respective result each time.

\begin{figure}[h]
    \centering
    \includegraphics[trim= 4.5cm 10cm 1.7cm 5.8cm, clip=true, scale = 1]{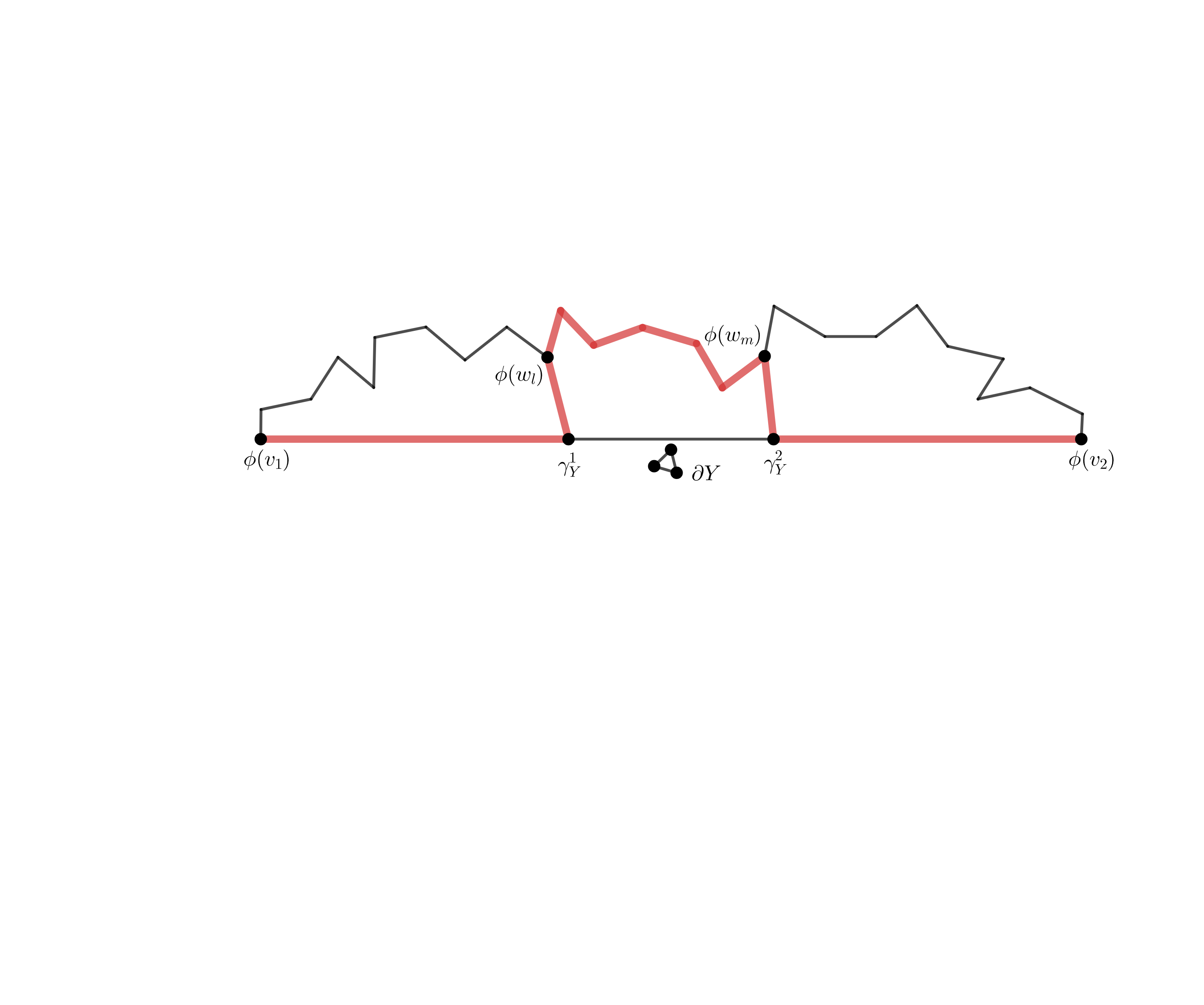}
    \caption{The path}
    \label{fig: the path}
\end{figure}

\begin{proof}
We continue using the same notation as above.

There are two main cases. The first is when $\partial Y$ is distance greater than 1 from $[\phi(v_1), \phi(v_2)]$ in $\C$ and hence $d_Y(\mu, \nu) \leq M + 2k(R)$. The second is when  $\partial Y$ is within distance $1$ of $[\phi(v_1), \phi(v_2)]$ and hence Theorem 2.2 is not applicable. 

Label the vertices of $[\phi(v_1), \phi(v_2)]$ traversed in order by $\phi(v_1), \gamma_1 ..., \gamma_N, \phi(v_2)$. That is

\begin{equation*}
    [\phi(v_1), \phi(v_2)] = [\phi(v_1), \gamma_1] \cup [\gamma_1, \gamma_2] \cup ... \cup [\gamma_N, \phi(v_2)].
\end{equation*}

Considering the path of the geodesic $[v_1, v_2]_T$ in $T,$ label vertices of $W$ traversed in order by $w_1, ..., w_{N'}$. That is
\begin{equation*}
   [v_1, v_2]_T = [v_1, w_1]_T \cup [w_1, w_2]_T \cup ... \cup [w_{N'}, v_2]_T.
\end{equation*}

Recall that
\begin{equation*}
   \phi([v_1, v_2]_T) = [\phi(v_1), \phi(w_1)] \cup [\phi(w_1), \phi(w_2)] \cup ... \cup [\phi(w_{N'}), \phi(v_2)].
\end{equation*}

Either one, two, or three adjacent vertices of $[\phi(v_1), \phi(v_2)]$ have empty projection to $Y.$ Let $\gamma_Y$ denote this segment with empty projection. Going from $\phi(v_1)$ to $\phi(v_2)$, let $\gamma_Y^1, \gamma_Y^2$ denote the first, last point of $\gamma_Y$, respectively.
 
Recall that $R_0(K_0, C_0, \delta)$ is the stability constant and $D = d_S(A,B) \geq 1$ since $G$ is a nontrivial free product.

\textbf{Case 1}: $d_S(\phi(v_1), \gamma_Y^1), d_S(\gamma_Y^2, \phi(v_2)) > R_0 +5D.$

Let $1 \leq j < k \leq N$ be such that $d_S(\gamma_j, \gamma_Y^1) = d_S(\gamma_Y^2, \gamma_k) = R_0 + 5D$. Let $\pi:[\phi(v_1), \phi(v_2)] \to \phi([v_1, v_2]_T)$ be  a closest point projection. Let $1\leq l \leq m \leq N'$ be such that $\pi(\gamma_j) \in [\phi(w_{l-1}), \phi(w_l)], \pi(\gamma_\nu) \in [\phi(w_m), \phi(w_{m+1})]$. Notice  $d_S(\gamma_j, \phi(w_l)), d_S(\gamma_k, \phi(w_m)) \leq R_0 + D$. By our choice of constants it's easy to check that for all $s$ in the connected paths $[\phi(v_1), \gamma_j] \cup [\gamma_j, \phi(w_l)]$ and $[\phi(w_m), \gamma_k] \cup [\gamma_k, \phi(v_2)]$

\begin{equation*}
    \pi_Y(s) \neq \emptyset.
\end{equation*}

It also follows from our choice of constants that $\pi_Y(A(w_l)), \pi_Y(A(w_m)) \neq \emptyset$. Then we have

\begin{equation*}
    d_Y(\mu(v_1), A(w_l)), d_Y(A(w_m), \mu(v_2)) \leq 2M + k(R) + 2.
\end{equation*}

Then we are done if we can show  $d_Y(A(w_l), A(w_m))$ is uniformly bounded. If $\pi_Y(A(w_n)) \neq \emptyset$ for all $l < n < m$ then by the triangle inequality we have

\begin{equation}
    \sum_{n=l}^{m-1} d_Y(A(w_n), A(w_{n+1})) \leq C_1(m-l).
\end{equation}

Suppose that there is some $l < p < m$ such that $\pi_Y(A(w_p)) = \emptyset$. Then $Y \subset S - A(w_p)$. Let $H(w_p)$ denote the coset associated to $A(w_p)$. Since $G$ is PGF, $\pi_Y(A(w_{p \pm 1})) \neq \emptyset$ since $A(w_p) \cup A(w_{p \pm 1})$ is a marking by Lemma 5.3. Notice that $A(w_{p-1})$ and $A(w_{p+1})$ differ by an element of $H(w_p)$. That is, there is some $t \in H(w_p)$ such that $t(A(w_{p-1})) = A(w_{p+1})$. Since $t$ is some product of twists on $A(w_p)$ and $Y \subset S - A(w_p)$, $t$ acts trivially on $\CY$. Hence $\pi_Y(A(w_{p-1})) = \pi_Y(A(w_{p+1}))$. Also notice that $\pi_Y(A(w_p)) = \emptyset$ implies $\pi_Y(A(w_q)) \neq \emptyset$ for all other $l < q < m$ distinct from $p.$ This follows from Lemma 5.3.

By the above, if $l+1 < p < m-1$ and $\pi_Y(A(w_p)) = \emptyset$, the following sum is well-defined and bounds $d_Y(A(w_l), A(w_m))$ by the triangle inequality

\begin{multline*}
    d_Y(A(w_l), A(w_{l+1})) + ... + d_Y(A(w_{p - 2}), A(w_{p - 1})) + d_Y(A(w_{p + 1}), A(w_{p + 2})) + ... \\ + d_Y(A(w_{m-1}), A(w_m)) \leq C_1(m-l).
\end{multline*}

We've simply deleted from (7) the two terms containing $A(w_{p})$. Notice we've excluded the cases $\pi_Y(A(w_{l+1})) = \emptyset$ or $\pi_Y(A(w_{m-1})) = \emptyset$. Say $\pi(A(w_{l+1})) = \emptyset$. The two terms in (7) containing $A(w_{l+1})$ are  $d_Y(A(w_l), A(w_{l+1})), d_Y(A(w_{l+1}), A(w_{l+2}))$. Deleting the first term renders the sum useless in bounding $d_Y(A(w_l), A(w_m)).$ So instead replace these two terms and $d_Y(A(w_{l+2}), A(w_{l+3}))$ with $d_Y(A(w_l), A(w_{l+3})) = d_Y(A(w_{l+2}), A(w_{l+3})) \leq C_1$. It should now be clear that, even if there is $l <  p < m$ with $\pi_Y(A(w_p)) = \emptyset$, by deleting or replacing terms from (7), there is a well-defined sum showing that

\begin{equation*}
    d_Y(A(w_l), A(w_m)) \leq C_1(m - l)
\end{equation*}

Finally, we note that $m-l$ is uniformly bounded above by a constant depending only on $K_0, C_0, R_0,$ and $D$. Specifically, $m-l \leq K_0 d_S(\phi(w_l), \phi(w_m)) + K_0 C_0 \leq  K_0(4R_0 + 12D) + K_0 C_0.$

\textbf{Case 2}: $d_S(\phi(v_1), \gamma_Y^1), d_S(\gamma_Y^2, \phi(v_2)) \leq R_0 + 5D.$

If $\pi_Y(A(w_n)) \neq \emptyset$ for all $1 \leq n \leq N'$ then the following sum is well-defined and bounds $d_Y(\mu, \nu)$

\begin{equation}
        d_Y(\mu_1, A(w_1)) + \sum_{n=1}^{N' - 1} d_Y(A(w_n), A(w_{n+1})) + d_Y(A(w_{N'}), \mu_2) \leq 2C_2 + C_1(N' - 1)
\end{equation}

Similar to case 1, if there is some $1<p<N'$ such that $\pi_Y(A(w_p)) = \emptyset$, simply delete the two terms in equation (8) that contain $A(w_p)$ to get a smaller sum.

We've excluded two cases: when $\pi_Y(w_1) = \emptyset$ or $\pi_Y(w_{N'}) = \emptyset$. Say $\pi_Y(w_1) = \emptyset$. Then the two terms in (8) containing $A(w_1)$ are $d_Y(\mu(v_1), A(w_1)), d_Y(A(w_1), A(w_2))$. Deleting the first term from (8) renders the sum useless in bounding $d_Y(\mu(v_1), \mu(v_2))$. So instead replace the two terms with $d_Y(\mu(v_1), A(w_2)) = d_Y(\mu(v_1), A(w_0)) \leq C_2$ where $w_0$ is the other endpoint of the edge $[w_0, w_1]$ containing $v_1$ in $T$. Again, $\pi_Y(A(w_0)) = \pi_Y(A(w_2))$ since $\pi_Y(A(w_1)) = \emptyset$.

Similarly define $w_{N' + 1}$ if $\pi_Y(w_N) = \emptyset$. It should now be clear that, even if there exists $1\leq p \leq N'$ with $\pi_Y(A(w_p)) = \emptyset$, by deleting or replacing terms from (8), there is a well-defined sum showing that

\begin{equation*}
    d_Y(\mu_1, \mu_2) \leq 2C_2 + C_1(N' - 1)
\end{equation*}

Finally, we note that $N'$ is uniformly bounded above by a constant depending only on $K_0, C_0, R_0,$ and $D.$

\textbf{Case 3}: $d_S(\phi(v_1), \gamma_Y^1) \leq R_0 + 5D$ and $d_S(\gamma_Y^2, \phi(v_2)) > R_0 + 5D$, or $d_S(\phi(v_1), \gamma_Y^1) > R_0 + 5D$ and $d_S(\gamma_Y^2, \phi(v_2)) \leq R_0 + 5D.$

In words,  this is the case when $\partial Y$ is close to either $\phi(v_1)$ or $\phi(v_2)$. It's easy to see how to combine the techniques of cases 1 and 2 to get a bound for this case.

Finally, take $M_1$ to be the maximum bound from all three cases.
\end{proof}

\section{Another Notion of Geometric Finiteness}








Since $G= H_A * H_B \cong \Z^n * \Z^m$ is a right angled-Artin group, it is a \textit{hierarchically hyperbolic space} \cite{BHS1}. Moreover, the inclusion of certain $G$ into $\Mod$ is a \textit{hieromorphism} \cite[Definition 1.10]{DHS}. Namely, when the generators of $H_A, H_B$ are all powers of Dehn twists (that is, they are not multitwists) the inclusion is a hieromorphism. We refer the reader to \cite[Definition 1.10]{DHS} for the terminology and precise definition, and sketch how our groups satisfy the definition.

Using the CAT(0) cube complex $\widetilde{X}$ as a model for $G$ and the general marking graph $\M$ as a model for $\Mod$, we let $f: \widetilde{X} \to \M$ be the coarse map $\mu: \widetilde{X} \to \M$ defined in section 5. We take $\widetilde{X},$ flats, and ``subflats'' as our index set (see \cite[Proposition 8.3, Remark 13.2]{BHS1}). Given any flat, an axis of the flat corresponds to twisting in a curve in $\mathcal{A},$ the $G$-orbit of $A \cup B$. This correspondence partially defines the map $\pi(f)$ between index sets. For flats $F$ of dimension greater than 1, let $\pi(f)(F)$ be the disjoint union of the curves corresponding to its underlying axes. Given an axis $U$ in $\widetilde{X}$ with points corresponding to elements say $gt^n,$ with $g \in G$ and $n \in \Z,$ its associated hyperbolic space $\mathcal{C}U$ is a line subdivided into intervals by ``midpoints,'' say $gt^\frac{2n+1}{2},$ corresponding to hyperplanes. A quasiisometric embedding $\rho(f, U)$ may be given by

\begin{equation*}
    \rho(f,U)(gt^\frac{2n+1}{2}) = \pi_{\pi(f)(U)} (gt^n(\mu)) \subset \mathcal{C}_{\pi(f)(U)}.
\end{equation*}

For flats $U \subset \widetilde{X}$ with dimension greater than 1, the associated hyperbolic space $\mathcal{C}U$ is a join of the associated hyperbolic spaces of its underlying axes, hence has uniformly bounded diameter. So one can simply choose $\rho(f, U)$ to be a constant map to $\mathcal{C}\pi(f)(U)$. Coarse commutativity of the diagrams from \cite[Definition 1.10]{DHS} follows from Lemma 5.9.

Theorem 5.10 tells us that these hieromorphisms are in fact an \textit{extensible}
\cite[Definition 5.5]{DHS}. This implies that such $G = H_A * H_B$ are also geometrically finite in the sense of \cite[Definition 2]{DHS}.

There is a more general notion of a  a \textit{slanted} hieromorphism \cite[Definition 5.1]{DHS}. This allows us to consider $H_A, H_B$ with multitwist generators. It is easy to see that if the generators of $H_A$ have disjoint supports, and similarly for $H_B,$ then the inclusion of $G$ into $\Mod$ is an extensible slanted hieromorphism, hence is geometrically finite in the sense of \cite{DHS}.

But in general, when the generators of $H_A$ (or $H_B$) do not have disjoint supports, it is not always clear how define $\pi(f)$.  Defining $\pi(f)$ in the obvious way generally fails to satisfy item (I) of \cite[Definition 5.1]{DHS}. It is an interesting question whether the definition of a slanted hieromorphism can be relaxed to include these examples yet still lead to geometric finiteness in the sense of \cite{DHS}. Alternatively, one might look for a different HHS structure on $\widetilde{X}$ and definition of $\pi(f)$ for these general examples in order to satisfy the current definition of slanted hieromorphisms.

\printbibliography

@article{Bow,
    author =    "Brian H. Bowditch",
    title  =    "Relatively hyperbolic groups",
    journal =   "Internat. J. Algebra Comput.",
    volume =    "22",
    number =    "3",
    year =      "2012",
    pages =     "66"
}

@book{BH,
    author =    "Martin R. Bridson and Andr{\'e} Haefliger",
    title  =    "Metric spaces of non-positive curvature",
    ISBN =      "3-540-64324-9",
    series =    "Grundlehren der Mathematischen Wissenschaften [Fundamental Principles of Mathematical Sciences], 319",
    year =      "1999",
    publisher = "Springer-Verlag, Berlin"
}

@article{BHS1,
    author =    "Jason Behrstock and Mark F. Hagen and Alessandro Sisto",
    title  =    "Hierarchically hyperbolic spaces, I: Curve
complexes for cubical groups",
    journal =   " Geom. Topol.",
    volume =    "21",
    number =    "3",
    year =      "2017",
    pages =     "1731–-1804"
}

@article{CLM,
    author =    "Matt T. Clay and Christopher J. Leininger and Johanna Mangahas",
    title  =    "The geometry of right-angled Artin subgroups of mapping class groups",
    journal =   "Groups Geom. Dyn.",
    volume =    "6",
    number =    "2",
    year =      "2012",
    pages =     "249--278"
}

@unpublished{CMM,
    author =    "Matt T. Clay and Johanna Mangahas and Dan Margalit",
    title  =    "Right-angled Artin groups as normal subgroups of mapping class groups",
    year =      "2021",
    howpublished =      "Preprint",
    note =       "arXiv:2001.10587v3"
}

@book{CDP,
    author =    "Michel Coornaert and Thomas Delzant and Athanase Papadopoulos",
    title  =    "G{\'e}om{\'e}trie et th{\'e}orie des groupes",
    ISBN =      "3-540-52977-2",
    series =    "Lecture Notes in Mathematics, 1441",
    year =      "1990",
    publisher = "Springer-Verlag, Berlin"
}

@unpublished{DDLS,
    author =    "Spencer Dowdall and Matthew Durham and  Christopher J. Leininger and Alessandro Sisto",
    title  =    "Extensions of Veech groups are hierarchically hyperbolic",
    year =      "2020",
    howpublished =      "Preprint",
    note =      "arXiv:2006.16425"
}

@article{DHS,
    author =    "Matthew G. Durham and Mark F. Hagen and Alessandro Sisto",
    title  =    "Boundaries and automorphisms of hierarchically hyperbolic spaces",
    journal =   " Geom. Topol.",
    volume =    "21",
    number =    "6",
    year =      "2017",
    pages =     "3659–-3758"
}

@article{KL,
    author =    "Richard P. Kent and Christopher J. Leininger",
    title  =    "Shadows of mapping class groups: Capturing convex cocompactness",
    journal =   "Geom. Funct. Anal.",
    volume =    "18",
    number =    "4",
    year =      "2008",
    pages =     "1270--1325"
}

@article{F,
    author =    "Benson Farb",
    title  =    "Relatively hyperbolic groups",
    journal =   "Geom. Funct. Anal.",
    volume =    "8",
    number =    "5",
    year =      "1998",
    pages =     "810--840"
}

@article{FM,
    author =    "Benson Farb and Lee Mosher",
    title  =    "Convex cocompact subgroups of mapping class groups",
    journal =   "Geom. Topol.",
    volume =    "6",
    year =      "2002",
    pages =     "91--152"
}

@article{Gr,
    author =    "Mikhail Gromov",
    title  =    "Hyperbolic groups",
    journal =   "Essays in Group Theory",
    publisher = {MSRI Series},
    volume =    "8",
    year =      "1987",
    pages =     "75--263"
}

@article{Gu,
    author =    "Funda G{\"u}ltepe",
    title  =    "Fully irreducible Automorphisms of the Free Group via Dehn twisting in $\sharp_k (S^2 \times S^1)$",
    journal =   "Algebr. Geom. Topol.",
    volume =    "17",
    number =    "3",
    year =      "2017",
    pages =     "1375--1405"
}

@unpublished{H,
    author =    "Ursula Hamenst{\"a}dt",
    title  =    "Word hyperbolic extensions of surface groups",
    year =      "2005",
    howpublished =      "Preprint",
    note =       "arXiv:math/0505244"
}

@article{HPW,
    author =    "Sebastian Hensel and Piotr Przytycki and Richard C. H. Webb",
    title  =    " 1-slim triangles and uniform hyperbolicity for arc graphs and curve graphs",
    journal =   "J. Eur. Math. Soc. (JEMS)",
    volume =    "17",
    number =    "4",
    year =      "2015",
    pages =     "755-–762"
}

@article{K,
    author =    "Thomas Koberda",
    title  =    "Right-angled Artin groups and a generalized isomorphism problem for finitely generated subgroups of mapping class groups",
    journal =   "Geom. Funct. Anal.",
    volume =    "22",
    number =    "6",
    year =      "2012",
    pages =     "1541--1590"
}

@article{MM1,
    author =    "Howard A. Masur and Yair N. Minsky",
    title  =    "Geometry of the complex of curves. I. Hyperbolicity",
    journal =   "Invent. Math",
    volume =    "138",
    number =    "1",
    year =      "1999",
    pages =     "103--149"
}

@article{MM2,
    author =    "Howard A. Masur and Yair N. Minsky",
    title  =    "Geometry of the complex of curves. II. Hierarchical structure",
    journal =   "Geom. Funct. Anal.",
    volume =    "10",
    number =    "4",
    year =      "2000",
    pages =     "902--974"
}

@article{P,
    author =    "Robert C. Penner",
    title  =    "A construction of pseudo-Anosov homeomorphisms",
    journal =   "Trans. Amer. Math. Soc.",
    volume =    "310",
    number =    "1",
    year =      "1988",
    pages =     "179--197"
}

@article{R,
    author =    "Ian Runnels",
    title  =    "Effective generation of right-angled Artin groups in mapping class groups",
    journal =   "Geom. Ded."
}

@unpublished{Seo,
    author =    "Donggyun Seo",
    title  =    "Powers of Dehn twists generating right-angled Artin groups",
    year =      "2019",
    howpublished =      "Preprint",
    note =       "arXiv:math/1909.03394"
}

@article{T,
    author =    "Robert Tang",
    title  =    "Affine diffeomorphism groups are undistorted",
    journal =   "J. Lond. Math. Soc. (2)",
    year =      "2021"
}

@article{Th,
    author =    "William P. Thurston",
    title  =    "On the geometry and dynamics of diffeomorphisms of surfaces",
    journal =   "Bull. Amer. Math. Soc. (N.S.)",
    volume =    "19",
    number =    "2",
    year =      "1988",
    pages =     "417--431"
}

@article{W,
    author =    "Richard C. H. Webb",
    title  =    "Uniform bounds for bounded geodesic image theorems",
    journal =   "J. Reine Angew. Math.",
    volume =    "709",
    year =      "2015",
    pages =     "219--228"
}

\end{document}